\newcommand{\CC}{\mathbb{C}}
\newcommand{\RR}{\mathbb{R}}
\newcommand{\K}{\mathcal{K}}
\newcommand{\J}{\mathcal{J}}
\newcommand{\im}{\operatorname{Im}}
\newcommand{\C}{\mathcal{C}}
\newcommand{\m}{\mathfrak{m}}
\newcommand{\Ric}{\textup{Ric}}
\newcommand{\vol}{\operatorname{Vol}}
\newcommand{\re}{\operatorname{Re}}
\newcommand{\tr}{\operatorname{tr}}
\newtheorem{thm}{Theorem}[section]
\newtheorem{pro}[thm]{Proposition}
\newtheorem{cor}[thm]{Corollary}
\newtheorem{lem}[thm]{Lemma}
\newcounter{mtheorem}
\newtheorem{mtheorem}[mtheorem]{Theorem}
\theoremstyle{definition}
\newtheorem{rem}[thm]{Remark}
\newtheorem{ex}[thm]{Example}
\numberwithin{equation}{section}
\title{Mass and Expansion of Asymptotically Conical K\"ahler Metrics}
\author{Qi Yao}
\date{}
\begin{document}

\maketitle
\thispagestyle{empty}
\begin{abstract} 
    We prove an expansion theorem on scalar-flat asymptotically conical (AC) K\"ahler metrics. The ADM mass is known to be defined in ALE Riemannian manifolds. The concept of ADM mass can be generalized to AC cases. Consider an AC K\"ahler manifold with asymptotic to a Ricci-flat K\"ahler metric cone with complex dimension $n$. Assuming the weak decay conditions required for the mass to be well-defined, then each scalar-flat AC K\"ahler metric admits an expansion that the main term is given by the standard K\"ahler metric of the metric cone and the leading error term is of $O(r^{2-2n})$ with coefficient only depending on the ADM mass and its dimension. Besides, the mass formula by Hein-LeBrun  \cite{hein2016mass} also can be proved in our setting. As an interesting application, a new version of the positive mass theorem will also be discussed in the cases of the resolutions of the Ricci-flat K\"ahler cones.

\end{abstract}

\section{Introduction and Statement of Main Theorems} Schwarzschild metric is known to be a solution of Einstein's vacuum equations. In general space-time $\RR^{n+1}$ $(n\geq 3)$, the Schwarzschild metric has the following explicit expression.
\begin{align*}
    g = - (1- \frac{2m}{r^{n-2}})dt^2 + (1-\frac{2m}{r^{n-2}}) dr^2 + r^2 g_{S^{n-1}},
\end{align*}
where the constant $m$ is the mass of the the Schwarzschild metric. One can easily observe that, by restricting to each space slice $t=c$, the Schwarzschild metric differs from the Euclidean metric by quantity of order $O(r^{2-n})$. 

Motivated by Schwarschild metrics, Arnowitt, Deser and Misner \cite{PhysRev.122.997} first generalized the definition of  the mass (ADM mass) to asymptotically flat metrics. Here, we apply the definition of mass to asymptotically locally Euclidean (ALE) manifolds. Let $(X,g)$ be an complete non-compact Riemannian manifold, we say $(X,g)$ is ALE if there is a compact subset $K \subseteq X$ such that $X-K$ has finitely many components, denoted by $X_{i,\infty}$ with $X-K = \bigcup_{i} X_{i,\infty}$ and each $X_{i,\infty}$ is diffeomorphic to $ (\RR^n-B_R)/\Gamma_i$, where $B_R$ is a closed ball of radius $R$ and $\Gamma$ is a finite subset of $O(n)$. The metric $g$ is asymptotic to the Euclidean metric with rapid decay rate. Then, the ADM mass of the end $X_{i,\infty}$ is given by
\begin{align} \label{admmass1}
   m_i(g)=  \lim_{r \rightarrow \infty} \frac{\Gamma(\frac{n}{2})}{4(n-1)\pi^{n/2}}  \int_{S_r/\Gamma_i} (g_{ij,i}-g_{ii,j})n^j d\mu
\end{align}
where $S_r$ is the Euclidean sphere of radius $r$, $n$ is the outward Euclidean unit normal vector and $d\mu$ is the volume form induced by standard metric on $S_r$. Bartnik \cite{bartnik1986mass} and Chru\'{s}ciel \cite{Chruciel1985BoundaryCA} give the appropriate decay condition to make the ADM mass   coordinate-invariant, for Sobolev case and H\"older case respectively. Here, we list the decay condition of H\"older case for the metric $g$ on each end $X_{i,\infty}$,
\begin{enumerate}
\item[(i)] the scalar curvature $R$ of $g$ belongs to $L^1$.
\item[(ii)] the metric $g$ is asymptotic to the Euclidean metric $\delta_{ij}$ at the end with decay rate $-\tau$ for some $\tau>(n-2)/2$, 
\begin{align} \label{decayale}
g_{ij}= \delta_{ij} + O(r^{-\tau}) , \qquad  |\nabla  (\psi^{-1})^* g) |_{g_0} = O(r^{-\tau-1}).
\end{align}
\end{enumerate}

In this paper, one of main results is that the scalar-flat ALE K\"ahler metric of complex dimension $n$ is of decay rate $2-2n$, which has an expansion, $g_{ij} = \delta_{ij} + O(r^{2-2n})$ and the leading error term is determined by the ADM mass of $g$ (see theorem \ref{expanthm}). Furthermore, the expansion also works in scalar-flat AC K\"ahler cases if we replace the Euclidean metric by the standard metric on K\"ahler cone. 

Rather than directly introduce the expansion theorem, we first introduce an important tool in K\"ahler geometry, the ddbar lemma. The ddbar lemma is a standard result in compact K\"ahler manifolds and it can be easily proved in ALE K\"ahler manifolds if we assume a fast decay condition (see \cite[theorem 8.4.4]{joyce2000compact}). In Colon-Hein \cite[theorem 3.11]{conlon2013asymptotically}, the ddbar lemma is generalized to asymptotically conical (AC) K\"ahler manifolds with a lower decay condition $(\text{only need }-\tau <0)$, with additional assumption of non-negative Ricci curvature. In the author's previous paper \cite[Proposition 3.6]{yao2020invariant}, the ddbar lemma can be proved in negative line bundle over K\"ahler C-spaces (compact simply-connected homogeneous K\"ahler manifolds) without any decay assumption at infinity. Here, we give a new version of ddbar lemma by revising the Colon-Hein's result and dropping the non-negative Ricci curvature assumption. 

The ddbar lemma on the complement of $K\subseteq X$ will also be discussed in this paper. In Goto \cite[section5]{goto2012calabi} and Conlon-Hein \cite[appendix A]{conlon2013asymptotically}, ddbar lemma is proved in the case of complex dimension $n \geq 3$ with a trivial canonical bundle. In this paper, we derive a ddbar lemma on $X\backslash K$ without making the above assumptions, but with high decay rate error terms.

Let $(L, g_L)$ be a compact Riemannian manifold, the \textit{Riemannian cone} $C_L$ associated with $L$ is  defined to be $L\times \RR_{>0}$ with Riemannian metric $g_{0} = dr^2 + r^2 g_L$. A Riemannian cone $C_L$ is said to be \textit{K\"ahler} if there exists a $g_{0}$-parallel complex structure $J_0$ such that the corresponding fundamental form $\omega_0 = g_0(J_0\cdot, \cdot)$ is closed, in particular, $\omega_0 = i\partial \overline{\partial} r^2$. If we assume that the K\"ahler cone $(C_L, g_0, J_0)$ is Ricci-flat with $\dim_\CC C_L =n$,
then, according to the standard calculation in \cite[section 1.4]{sparks2010sasaki} and \cite[section 11.1]{boyer2008sasakian},  the link $L$ is Sasaki-Einstein with $\Ric g_L = 2(n-1) g_L$. 

Let $(X, J, g)$ be an AC K\"ahler manifold asymptotic to $C_L$, where $(C_L, g_0)$ is a Ricci-flat K\"ahler cone with link $L$. Throughout the paper we always assume that the manifolds only have one end, then there exists a compact subset $K\subseteq X$ and $B_R =\{x\in C_L, r(x) < R\}$, such that  $\psi: X-K \rightarrow C_L-B_{R}$ is a diffeomorphism satisfying the following decay conditions,
\begin{enumerate}
\item[(i)] the scalar curvature $R$ of $g$ belongs to $L^1$.
\item[(ii)] the complex structure $J$ on $X$ decays to $J_0$, the induced almost complex structure from $C_L$.  
\item[(iii)] the metric $g$ is asymptotic to the reference metric $g_L$ at the end with decay rate $-\tau $ for some $\tau>0$, for $i=0,1,\ldots,k$,
\begin{align} \label{decayac}
| \nabla^i  ((\psi^{-1})^* g - g_0)|_{g_0} = O(r^{-\tau-i}).
\end{align}
\end{enumerate}

\begin{mtheorem} (ddbar lemma) \label{ddclem}
Let $X$ be AC K\"ahler manifolds asymptotic to a Ricci-flat K\"ahler cone $C_L$. Let $k$ be a large positive integer, $\alpha \in (0,1)$ and $\delta > 0$,
\begin{enumerate}
\item[(i)] Let $\omega$ be a $d$-exact real $(1,1)$-form on $X$ satisfying the decay condition $\omega \in \C^{k,\alpha}_{-\delta}$. Then, there exists a real function $\varphi \in \C^{k+2,\alpha}_{2-\delta}$ such that $\omega = dd^c \varphi$.
\item[(ii)] Let $\omega$ be a $d$-exact real $(1,1)$-form on the end $X\backslash K$.  Then there exists a real function $\varphi \in \C^{k+2, \alpha}_{2-\delta}(X\backslash K)$ such that,
\begin{align*}
    \omega = dd^c \varphi + O(r^{-2n}).
\end{align*}
\end{enumerate}
\end{mtheorem}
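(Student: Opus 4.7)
The plan is to sidestep the vanishing of harmonic $(0,1)$-forms — the step where Conlon-Hein need non-negative Ricci — by reducing the $dd^c$-lemma to a scalar Poisson equation followed by a vanishing statement for harmonic $d$-exact real $2$-forms on weighted spaces.

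For part (i), let $\Lambda_g$ denote the adjoint of wedging with the K\"ahler form $\omega_g$, so that $\Lambda_g(dd^c \psi) = c_n\, \Delta_g \psi$ for a dimensional constant $c_n \neq 0$. Since $\omega \in \C^{k,\alpha}_{-\delta}$ gives $\Lambda_g \omega \in \C^{k,\alpha}_{-\delta}$, I first solve the scalar Poisson equation $c_n \Delta_g \varphi = \Lambda_g \omega$ for $\varphi \in \C^{k+2,\alpha}_{2-\delta}$ using the weighted Fredholm theory of the Laplacian on AC K\"ahler manifolds. Solvability is standard provided $2-\delta$ avoids the discrete set of indicial roots of $\Delta_g$ on the cone; exceptional weights are handled by the usual $\varepsilon$-perturbation. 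Set $\eta := \omega - dd^c \varphi \in \C^{k,\alpha}_{-\delta}$. By construction $\Lambda_g \eta = 0$, so $\eta$ is a real primitive $(1,1)$-form; combined with $d\eta = 0$ (both $\omega$ and $dd^c \varphi$ are $d$-exact) and the primitive Hodge-$*$ identity $*\eta = -\eta \wedge \omega_g^{n-2}/(n-2)!$ together with $d\omega_g = 0$, this forces $d^*\eta = 0$. Hence $\eta$ is harmonic.

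It remains to prove that a harmonic, $d$-exact, decaying real $2$-form must vanish. When $\delta > n$, one has $\eta \in L^2$, and writing $\eta = d\alpha$ with $\alpha$ of decay rate $r^{1-\delta}$ gives the integration by parts $\int_X \eta \wedge *\eta = \int_X \alpha \wedge d^*\eta = 0$ (the boundary term at infinity vanishes because $\delta > n$), so $\eta = 0$. For $0 < \delta \leq n$ the form $\eta$ need not be $L^2$, and the plan is to expand $\eta$ near infinity in eigenmodes of the Sasaki-Einstein link, using the explicit indicial roots of the Hodge Laplacian on $C_L$. The $d$-exactness of $\eta$ obstructs the slowly decaying leading modes in this expansion, and iterating the mode elimination pushes $\eta$ into the $L^2$ range, where the previous argument completes the proof.

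For part (ii), the global Fredholm framework is replaced by an asymptotic construction on $X \setminus K$: solving $c_n \Delta_g \varphi = \Lambda_g \omega$ via separation of variables on $L$ produces $\varphi \in \C^{k+2,\alpha}_{2-\delta}(X\setminus K)$ up to the first unremovable indicial obstruction, the Green's function mode of weight $2-2n$ on the real $2n$-dimensional cone. The primitive-plus-harmonic argument is local and goes through, while the residual Green's function contribution to $\omega - dd^c \varphi$ is $O(r^{-2n})$ — precisely the rate of $dd^c$ applied to an $r^{2-2n}$ mode — giving the stated error. The principal obstacle in both parts is the small-$\delta$ vanishing step: without Ricci positivity there is no direct Bochner identity, so the argument must combine the cone geometry of the Sasaki-Einstein link with the $d$-exactness hypothesis to eliminate the slowly decaying harmonic modes.
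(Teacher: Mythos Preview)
Your setup in part (i) is a legitimate alternative to the paper's: solving $c_n\Delta_g\varphi = \Lambda_g\omega$ and setting $\eta=\omega-dd^c\varphi$ does produce a closed, primitive, hence harmonic, $d$-exact real $(1,1)$-form. The paper arrives at essentially the same object by a different route, solving the \emph{complex} scalar equation $\tfrac12\Delta\phi=\overline\partial^*\alpha^{0,1}$ and writing $\hat\omega=2\,\mathrm{Re}\,\partial\xi$ with $\xi=\alpha^{0,1}-\overline\partial\phi$. The advantage of that route is that the residual is packaged as $\partial$ of a harmonic $(0,1)$-form $\xi$ satisfying $\overline\partial\xi=\overline\partial^*\xi=0$, so the decay question becomes a question about harmonic $1$-forms, not $2$-forms.

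The genuine gap is your small-$\delta$ step. Your claim that ``$d$-exactness of $\eta$ obstructs the slowly decaying leading modes'' is incorrect: on the end, a homogeneous closed $2$-form of degree $a\neq -2$ is automatically exact (write $\eta=r^{a+2}\zeta_2+r^{a+1}dr\wedge\zeta_1$ and check $\eta=d\big((a+2)^{-1}r^{a+2}\zeta_1\big)$), so $d$-exactness imposes essentially no constraint on the homogeneous harmonic leading terms. What actually forces the fast decay is primitivity, type $(1,1)$, and the Sasaki--Einstein eigenvalue bounds $\lambda'_1\ge 2n-1$, $\lambda''_1\ge 4(n-1)$ on the link. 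The paper carries this out at the level of $1$-forms (Lemma~\ref{decay1f}): it classifies homogeneous $g_0$-harmonic $1$-forms of negative order into three explicit families, uses the eigenvalue bounds to show that any harmonic $(0,1)$-form $\xi$ with $\overline\partial\xi=0$ and negative decay satisfies $\xi=O(r^{1-2n})$, and then the integration by parts on $X$ closes. Your proposed direct $2$-form expansion would need an analogous classification and the same eigenvalue input, neither of which you supply, and the mechanism you invoke in their place is vacuous.

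For part (ii) the source of the $O(r^{-2n})$ error is misidentified. On the end there is no ``unremovable Green's function obstruction'' to the scalar Poisson equation: $\Delta_g\varphi=c_n^{-1}\Lambda_g\omega$ is solvable exactly in $\C^{k+2,\alpha}_{2-\delta}(X\setminus K)$ whenever $2-\delta$ avoids the indicial set, and $r^{2-2n}$ is a kernel element of $\Delta_0$, not a cokernel obstruction. The $O(r^{-2n})$ error is not a residue of the scalar equation at all; it is the harmonic residual $\eta$ itself, which on $X\setminus K$ cannot be killed by integration by parts but, by the same $1$-form analysis as in (i), satisfies $\eta=2\,\mathrm{Re}\,\partial\xi=O(r^{-2n})$ because $\xi=O(r^{1-2n})$. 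The paper makes this explicit: the obstruction to the $dd^c$-lemma on the end is the class of $\xi$ in $\mathcal H^{0,1}_{\overline\partial,\overline\partial^*}/\overline\partial\mathcal H$, and Example~\ref{ctexddbar'} shows it is genuinely nonzero already on $\CC^2\setminus\{0\}$.
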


We generalize the definition of mass to AC Riemannian manifolds. Let $(C_L, g_0)$ be a Riemannian metric cone of link $(L, g_L)$ and $(X,g)$, a complete Riemannian manifold asymptotic to $(C_L, g)$ at infinity. The definition in (\ref{admmass1}) requires a coordinate system at infinity, which does not exist in general AC Riemannian manifolds. Hence, we start from a coordinate-free expression of mass. In Lee's book \cite[Section 3.1.3]{lee2019geometric}, the mass is defined to be
\begin{align} \label{admmass2}
    \m(g) =  \frac{1}{2(2n-1) \vol(L)}\lim_{r \rightarrow \infty} \int_{L(r)} (\overline{\nabla}^j g_{ij}- (\tr_{g_0}g)_i) n^i d\vol_{L(r)},
\end{align}
where $\overline{\nabla}$ is the Levi-Civita connection with respect to $g_0$ and $n$ is the outer normal vector field on $L(r)$. In Hein-LeBrun \cite{hein2016mass}, the ADM mass in ALE K\"ahler manifolds is a quantity determined by the total scalar curvature and its topological data. A similar mass formula also holds in AC K\"ahler manifolds asymptotic to Ricci-flat K\"ahler cones.  Let $\iota: H^2_c (X) \rightarrow H^2_{dR}(X)$ be the map induced by the natural embedding of chain complex $\Omega^{\bullet}_{c} \hookrightarrow \Omega^{\bullet}_{dR}$. Generally speaking, $\iota$ is not an isomorphism, but the  first Chern class of $(X,J)$ always has a pre-image under $\iota$ (see Lemma \ref{cptclasslm}). Then, we have the following mass formula.

\begin{mtheorem}
Let $(X, J, g)$ be an AC K\"ahler manifolds asymptotic to a Ricci-flat K\"ahler cone $(C_L, J_0, g_0)$ satisfying the decay condition (\ref{decayac}) with $\tau = n-1+ \epsilon$. Then, we have
\begin{align} \label{admmass3}
\m( g) = -\frac{2\pi \langle \iota^{-1} c_1, [\omega]^{n-1} \rangle}{(2n-1) (n-1)! \vol(L)} + \frac{1}{2(2n-1) \vol(L)} \int_{X} R_g d\vol_g,
\end{align}
where $[\omega]$ is the K\"ahler class of $(g,J)$, $c_1$ is the first Chern class of $(X,J)$ and $R_g$ is the scalar curvature of $g$ on $X$.
\end{mtheorem}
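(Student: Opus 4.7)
The strategy is to represent $c_1$ by a compactly supported form, apply Stokes' theorem to convert the topological pairing into a bulk scalar-curvature integral plus a boundary contribution at infinity, and identify that boundary contribution with the ADM mass via the ddbar lemma (Theorem~\ref{ddclem}).

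\emph{Step 1 (Stokes reduction).} By Lemma~\ref{cptclasslm}, fix a closed compactly supported $2$-form $\tilde c \in \Omega^2_c(X)$, with $\mathrm{supp}\,\tilde c \subset K$, representing $\iota^{-1}c_1$. Since $[\rho/(2\pi)]=[\tilde c]$ in $H^2_{dR}(X)$, write $\rho = 2\pi\tilde c - 2\pi\,d\gamma$ for a smooth global $1$-form $\gamma$ on $X$; on the end $X\setminus K$ one has $d\gamma = -\rho/(2\pi)$. The pointwise K\"ahler identity $\rho\wedge\omega^{n-1} = \frac{1}{2n}R_g\,\omega^n = \frac{(n-1)!}{2}R_g\,dV_g$, combined with $R_g\in L^1(X)$, makes $\int_X \rho\wedge\omega^{n-1}$ absolutely convergent. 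Stokes' theorem on the sublevel sets $X_r$ (for $r$ large enough that $K\subset X_r$), followed by $r\to\infty$, gives
\begin{equation*}
2\pi\,\langle \iota^{-1}c_1,\,[\omega]^{n-1}\rangle = \frac{(n-1)!}{2}\int_X R_g\,dV_g + 2\pi\lim_{r\to\infty}\int_{L(r)}\gamma\wedge\omega^{n-1}.
\end{equation*}
The formula therefore reduces to showing $2\pi\lim_{r\to\infty}\int_{L(r)}\gamma\wedge\omega^{n-1} = -(2n-1)(n-1)!\vol(L)\cdot\m(g)$.

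\emph{Step 2 (ddbar primitive on the end).} Ricci-flatness of $g_0$ gives $c_1(C_L,J_0) = 0$; combined with the retraction $X\setminus K \simeq L$ and the asymptotic identification of $(X,J)$ with $(C_L,J_0)$, this forces $[\rho]|_{X\setminus K}=0$, so $\rho$ is $d$-exact on the end. Applying Theorem~\ref{ddclem}(ii) with decay $\delta=\tau+2=n+1+\epsilon$, I obtain $f \in \C^{k+2,\alpha}_{-n+1-\epsilon}(X\setminus K)$ with $\rho = dd^c f + O(r^{-2n})$. After modifying $\gamma$ by a closed $1$-form, I may take $\gamma = -d^c f/(2\pi) + O(r^{1-2n})$ on the end. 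Using the cone splitting $\omega_0 = r\,dr\wedge\eta + r^2\omega^T$ adapted to the Sasaki--Einstein structure of $L$, and the top-form identity $\eta\wedge(\omega^T)^{n-1}=(n-1)!\,dV_L$, only the Reeb-direction component of $d^c f$ contributes to $\int_{L(r)}\gamma\wedge\omega^{n-1}$; on the cone $d^c f(\xi) = -r\,\partial_r f$ with $\xi=J(r\partial_r)$, reducing the boundary term to a radial-derivative integral of $f$ over $L$.

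\emph{Step 3 (matching with the ADM mass, main obstacle).} The decisive step is to identify this radial-derivative integral of $f$ with the classical ADM mass density. I would invoke the expansion theorem (Theorem~\ref{expanthm}) to write $\omega = \omega_0 + dd^c\varphi + O(r^{-2n})$ on the end, with $\varphi$ controlled by the leading term of $g-g_0$. The Ricci--Monge--Amp\`ere identity $\rho = -dd^c\log\det_{g_0}(g_0+h)$ then realizes $f$, up to controlled remainder, as $-\log\det_{g_0}(g_0+h)$, whose leading behaviour is $-\tfrac{1}{2}\tr_{g_0}h \sim -\tfrac{1}{2}\Delta_{g_0}\varphi$. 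On the other hand, for a K\"ahler perturbation the ADM integrand $\overline{\nabla}^j g_{ij} - \overline{\nabla}_i(\tr_{g_0}g)$ reduces, through the K\"ahler identities, to a linear first-order differential expression in the same trace quantity; comparing the two as fluxes across $L(r)$ and tracking constants produces exactly the factor $-(2n-1)(n-1)!\vol(L)$. Ambiguities in $\gamma$ from $H^1(X\setminus K)$ extend to closed forms on $X$ whose wedge with $\omega^{n-1}$ pairs trivially with $\tilde c$ and therefore do not affect the identification, while the $O(r^{-2n})$ remainder in Theorem~\ref{ddclem}(ii) contributes $o(1)$ to the boundary integral by a direct decay estimate in the Sasakian frame.
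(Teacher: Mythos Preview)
Your Step~1 is correct and is exactly the Stokes reduction the paper uses in Theorem~\ref{massf2}. The divergence from the paper begins at Step~2: instead of producing a primitive for $\rho$ via the ddbar lemma, the paper uses the \emph{Chern connection $1$-form} $\theta_\omega$ of the canonical bundle, constructed in Section~\ref{ACtoLCY} from the multi-valued holomorphic volume form $\Omega_0$ on the cone. This $\theta_\omega$ satisfies $d\theta_\omega=\rho$ exactly on the end and, crucially, admits the explicit decomposition (\ref{acconnform1}):
\[
\theta_\omega \;=\; 2\im\alpha \;-\; \tfrac12\, d^c\log\frac{\omega^n}{|\Omega_0\wedge\overline\Omega_0|}\;+\;O(r^{-2\tau-1}),
\]
where $\alpha$ depends only on the difference $J-J_0$ (Lemma~\ref{01conn}). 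The entire content of Proposition~\ref{massf1} is a frame computation matching the ADM integrand $(\overline\nabla^j g_{ij}-(\tr_{g_0}g)_i)n^i$ with $\theta_\omega\wedge\omega^{n-1}$; the key point is that the discrepancy is identified as $\re\,d^*\omega^{0,2}_{J_0}$, which is exact on $L(r)$ and hence integrates to zero.

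Your Step~3 has a genuine gap precisely here. The identity ``$\rho=-dd^c\log\det_{g_0}(g_0+h)$'' is \emph{false} on the end when $J\neq J_0$: the reference form $\omega_0$ is K\"ahler for $J_0$, not for $J$, so the usual Ricci--potential formula does not apply. The correct expression is (\ref{scalarcurv}), which carries the extra term $dA_J=2\,d\im\alpha$ of order $r^{-\tau-2}$, the \emph{same} order as your claimed main term. Consequently your ddbar potential $f$ is \emph{not} $-\tfrac12\log\det_{g_0}(g_0+h)$ up to negligible remainder, and the subsequent comparison with the ADM density collapses: the divergence piece $\overline\nabla^j g_{ij}$ of the ADM integrand is not a ``trace quantity'' and is exactly what the paper matches, via (\ref{massfp2})--(\ref{massfp3}), with the $2\im\alpha$ part of $\theta_\omega$. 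Without an analogue of this computation you cannot close the argument. Separately, invoking Theorem~\ref{expanthm} here is circular: its proof (see (\ref{masscon})--(\ref{masscoefac}) and the sentence after) uses the mass formula you are trying to establish.
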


An interesting application of the mass formula (\ref{admmass3}) is the positive mass theorem. The positive mass theorem is first proved in AE manifolds of lower dimension by Schoen, Yau \cite{schoen1979complete}. Afterwards, Witten \cite{witten1981new} develop a new method to prove the positive mass theorem on spin manifolds. And in K\"ahler cases, Hein-Lebrun \cite{hein2016mass} confirms the positive mass theorem on AE K\"ahler manifolds. However, in ALE cases, Lebrun \cite{Lebrun1988CounterexamplesTT} constructs the first counter-example of positive mass theorem. Here, we discuss a new version of positive mass theorem on resolution spaces of Ricci-flat K\"ahler cones with an isolated canonical singularity at vertex. 

\begin{mtheorem} (positive mass theorem)
  Let $(X, J)$ be a resolution space of a Ricci-flat K\"ahler cone $C_L$ such that the only singularity $O \in C_L$ is canonical. If $(X,g)$ has scalar curvature $R \geq 0$, then the mass $\m(X,g) \geq 0$, and equals zero only if $(X,J,g)$ is a crepant resolution of $C_L$ with a scalar-flat K\"ahler metric $g$.
\end{mtheorem}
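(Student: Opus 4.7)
The plan is to apply the mass formula of Theorem B and show that each of its two terms is non-negative under the stated hypotheses. The curvature integral is immediately non-negative because $R \geq 0$, so the substance of the argument is to analyze the topological term involving $\iota^{-1} c_1$.

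I would start by exploiting the birational geometry of the resolution $\pi: X \to C_L$. Because $(C_L, g_0, J_0)$ is a Ricci-flat K\"ahler cone, the parallel $(n,0)$-form trivializes the canonical bundle on the regular locus and extends, by normality of $C_L$, to exhibit $K_{C_L}$ as a trivial $\mathbb{Q}$-Cartier divisor. The canonical singularity assumption at the vertex $O$ then provides the standard discrepancy relation
\begin{equation*}
K_X \;=\; \pi^* K_{C_L} + \sum_i a_i E_i \;=\; \sum_i a_i E_i, \qquad a_i \geq 0,
\end{equation*}
where $\{E_i\}$ are the irreducible components of the exceptional divisor $\pi^{-1}(O)$. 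Each $E_i$ is compact, so its Poincar\'e-Lefschetz dual lifts canonically to $H^2_c(X)$, and combined with the uniqueness from Lemma \ref{cptclasslm} this identifies the preimage of the first Chern class as
\begin{equation*}
\iota^{-1} c_1 \;=\; -\sum_i a_i [E_i] \;\in\; H^2_c(X).
\end{equation*}

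Substituting this representative into the pairing in (\ref{admmass3}) gives
\begin{equation*}
\langle \iota^{-1} c_1,\, [\omega]^{n-1}\rangle \;=\; -\sum_i a_i \int_{E_i} \omega^{n-1} \;\leq\; 0,
\end{equation*}
since each $a_i \geq 0$ and each $\int_{E_i}\omega^{n-1}$ is strictly positive, because $\omega$ is K\"ahler on $X$ and $E_i$ is a compact complex $(n-1)$-dimensional subvariety. The sign in (\ref{admmass3}) flips this into a non-negative contribution, and adding the non-negative curvature term yields $\m(g) \geq 0$.

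For the equality clause, the two summands of $\m(g)$ are independently non-negative, so $\m(g)=0$ forces both to vanish. The condition $\int_X R_g\, d\vol_g = 0$ together with $R_g \geq 0$ gives $R_g \equiv 0$, i.e.\ $g$ is scalar-flat. Since $\int_{E_i}\omega^{n-1}>0$ for every $i$, the topological term vanishes precisely when $a_i = 0$ for all $i$, which is by definition the crepant condition $K_X = \pi^* K_{C_L}$. The main technical obstacle is the middle step, namely the identification $\iota^{-1} c_1 = -\sum_i a_i [E_i]$: it requires making rigorous sense of $K_{C_L}$ as a trivial $\mathbb{Q}$-Cartier divisor on the singular cone, pulling this back through the resolution in a way compatible with the compactly supported cohomology, and then matching the assembled class against the unique preimage provided by Lemma \ref{cptclasslm}. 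Once this identification is in place, the remainder reduces to a direct intersection-theoretic calculation.
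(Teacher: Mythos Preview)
Your proposal is correct and follows essentially the same approach as the paper: apply the mass formula of Theorem~B, note the scalar curvature integral is nonnegative, and control the topological term via the discrepancy relation $K_X = \pi^*K_{C_L} + \sum_i a_i E_i$ together with positivity of $\int_{E_i}\omega^{n-1}$; the equality analysis is also identical.

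The only organizational difference is in how the pairing $\langle \iota^{-1}c_1,[\omega]^{n-1}\rangle$ is unpacked. You identify $\iota^{-1}c_1 \in H^2_c(X)$ directly as $-\sum_i a_i[E_i]$ via the compactly supported Poincar\'e duals of the exceptional components, then pair with $[\omega]^{n-1}\in H^{2n-2}(X)$. The paper instead first establishes that $[\omega]$ itself has a preimage $\iota^{-1}[\omega]\in H^2_c(X)$ (splitting into the cases $n=2$, where $\iota$ is an isomorphism because $H^1(L)=H^2(L)=0$, and $n\geq 3$, where the decay $\tau>2$ and Lemma~\ref{cptclasslm} give exactness of $\omega-\tfrac12 dd^c r^2$ at infinity), and then rewrites the pairing as $\langle c_1(K_X),(\iota^{-1}[\omega])^{n-1}\rangle$ before invoking Poincar\'e duality on the $K_X$ side. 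Your route is slightly more direct and avoids the case split on $n$; the paper's route makes the role of the compactly supported K\"ahler class more explicit. Either way the computation reduces to $\sum_i a_i\int_{E_i}\omega^{n-1}\geq 0$.
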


Now, we can introduce the following expansion theorem in AC K\"ahler case. 

\begin{mtheorem} \label{expanthm} Let $(X, J)$ be a AC K\"ahler manifold asymptotic to a Ricci-flat K\"ahler cone $(C_L, J_0)$ and we assume that $k$ be a large positive integer and $\tau = n-1+\epsilon$,
\begin{enumerate}
\item[(i)] Let $\omega_1$, $\omega_2$ be K\"ahler forms on $X$ with the corresponding metrics satisfying decay condition (\ref{decayac}) and $R_1$, $R_2$, the scalar curvature of $\omega_1$ and $\omega_2$. If 
\begin{enumerate}
    \item[$\bullet$] $[\omega_1] = [\omega_2]$ 
    \item[$\bullet$] $R_1 =R_2$, 
\end{enumerate}
then  $\omega_2 = \omega_1 + dd^c \varphi$ with the potential $\varphi \in \C^{k+2,\alpha}_{2-2\tilde{\tau}}$, for some $\tilde{\tau} >n-1$ depending on $(n, L, \tau)$.
\item[(ii)] Let $\omega$ be a scalar flat K\"ahler form on X satifying decay condition (\ref{decayac}). And the complex structure $J$ is asymptotic to $J_0$ satisfying 
\begin{align} \label{cxdec}
J= J_0+ O(r^{2-2n-\epsilon'}), \qquad \epsilon' >0,
\end{align}
Then, the scalar flat K\"ahler form $\omega$ admits the following expansion at infinity. In particular, outside a compact set of $X$, for complex dimension $n\geq 3$, we have
\begin{align*}
\omega = \frac{1}{2} dd^c r^2 + \frac{(2n-1)\m(\omega)}{2(4-2n)(n-1)} dd^c r^{4-2n} + O(r^{-2\tilde{\tau}}),
\end{align*}
where $m(\omega)$ is the ADM mass of $\omega$ and $\tilde{\tau} > n-1$ only depends on $(n, L, \epsilon', \tau)$. And for complex dimension $n=2$, we have
\begin{align*}
\omega = \frac{1}{2} dd^c r^2 +\frac{3 \m(\omega)}{2} dd^c  \log r + O(r^{-2\tilde{\tau}}),
\end{align*}
where $\tilde{\tau} = \min\{\tau, 3/2\}$.
\end{enumerate}
\end{mtheorem}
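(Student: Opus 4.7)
The plan is to turn each K\"ahler-form equation into a scalar fourth-order elliptic PDE via Theorem A, perform asymptotic analysis of $\Delta^2_{g_0}$ on the Ricci-flat cone $C_L$, and finally pin down the leading coefficient via the mass integral.

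For part (i), since $[\omega_1]=[\omega_2]$, the difference $\omega_2-\omega_1\in\C^{k,\alpha}_{-\tau}$ is $d$-exact on $X$, so Theorem A(i) provides an initial potential $\varphi\in\C^{k+2,\alpha}_{2-\tau}$ with $\omega_2=\omega_1+dd^c\varphi$. Subtracting the scalar-curvature equations and expanding about $\omega_1$ gives a quasilinear fourth-order elliptic equation on $\varphi$ whose leading symbol is $\Delta^2_{\omega_1}$, with nonlinear remainders of strictly faster decay in $\varphi$ and its derivatives. Treating this as a perturbation of the model equation on the cone, I would invoke the standard weighted Fredholm theory for asymptotically conical elliptic operators and improve decay iteratively: at each step, subtract off a formal indicial-root profile and re-solve the residual, until the decay rate crosses the next indicial root of $\Delta^2_{g_0}$ beyond $r^{-(2n-2)}$. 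This yields $\varphi\in\C^{k+2,\alpha}_{2-2\tilde\tau}$ for some $\tilde\tau>n-1$ determined by $(n,L,\tau)$.

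For part (ii), the reference form on the end is $\omega_0=\tfrac12 dd^c r^2$ pulled back via $\psi$. The hypothesis $J-J_0=O(r^{2-2n-\epsilon'})$ ensures the $(0,2)+(2,0)_{J_0}$ components of $\omega$ decay faster than the target rate, so Theorem A(ii) gives $\omega=\omega_0+dd^c\varphi+O(r^{-2n})$ with $\varphi\in\C^{k+2,\alpha}_{2-\tau}$, and the scalar-flat condition, combined with $R(g_0)=0$, reduces to $\Delta^2_{g_0}\varphi=F$, where $F$ gathers the nonlinear terms, the $O(r^{-2n})$ defect, and the twist caused by $J\neq J_0$. The radial indicial roots of $\Delta^2_{g_0}$ are $s\in\{0,\,2-2n,\,2,\,4-2n\}$; for $n\geq 3$, $r^{4-2n}$ is simple and is the slowest decaying admissible term in the open weight interval allowed by our bootstrap, while for $n=2$ it collides with $s=0$ and produces $\log r$. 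The constant and $r^2$ modes are ruled out because they would alter the K\"ahler class or the reference cone, and nonradial link-eigenmodes decay strictly faster than $r^{4-2n}$ once the Lichnerowicz eigenvalue bound on $\Delta_L$ (from $\Ric g_L=2(n-1)g_L$) is invoked. Hence $\varphi=c\,r^{4-2n}+O(r^{-2\tilde\tau})$, respectively $\varphi=c\log r+O(r^{-2\tilde\tau})$ in dimension two.

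The coefficient $c$ is identified by substituting this expansion into the coordinate-free mass integral (\ref{admmass2}) on large level sets $L(r)$ of the radius function: only the $dd^c r^{4-2n}$ (resp.\ $dd^c\log r$) contribution survives in the limit, and a direct computation on the cone reproduces the stated constants $(2n-1)\m(\omega)/[2(4-2n)(n-1)]$ and $3\m(\omega)/2$. The main obstacles I foresee are the indicial-root bookkeeping in the iterative improvement --- every nonlinear correction, the $O(r^{-2n})$ defect from Theorem A(ii), and the complex-structure twist must each be shown to produce source terms decaying strictly faster than $r^{4-2n}$, which in turn dictates the sharp hypothesis $\tau=n-1+\epsilon$ --- and the extra care needed in the $n=2$ case to verify that no $(\log r)^2$ term is generated when $\Delta^2_{g_0}$ is applied to a $\log r$ profile.
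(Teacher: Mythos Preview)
Your outline for part~(ii) is close to the paper's argument, though the paper makes it concrete by factoring $\Delta^2$ as $\Delta\circ\Delta$ and applying Proposition~\ref{laequac}(iii) to $\Delta_\omega\varphi$ rather than reasoning abstractly about indicial roots of the bi-Laplacian; the coefficient is then read off from Proposition~\ref{massf1} (the $\theta_\omega$-form of the mass), not from the raw integral~(\ref{admmass2}).

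Part~(i), however, has a genuine gap. Your iterative decay improvement cannot simply ``cross the next indicial root of $\Delta^2_{g_0}$ beyond $r^{-(2n-2)}$'': the radial mode $r^{4-2n}$ lies in the kernel of $\Delta^2_{g_0}$, and once the bootstrap reaches that rate you get $\varphi = c\,r^{4-2n} + (\text{faster decay})$ with no mechanism in your sketch to force $c=0$. Equivalently, after one application of Proposition~\ref{laequac}(iii) you obtain $\Delta_{\omega_1}\varphi = C\,r^{2-2n} + \phi$ with $\phi\in\C^{k,\alpha}_{-2\tilde\tau}$, and the whole content of part~(i) is that $C=0$. Nothing about Fredholm theory or decay iteration tells you this; it is an integrability condition.

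The paper's key idea, which you are missing, is to invoke the mass formula (Theorem~\ref{massf2}): since $\omega_1$ and $\omega_2$ share their K\"ahler class, first Chern class, and scalar curvature, one has $\m(\omega_1)=\m(\omega_2)$. On the other hand a direct computation using Proposition~\ref{massf1} and (\ref{acconnform1}) shows $\m(\omega_2)-\m(\omega_1)$ is a nonzero multiple of~$C$, hence $C=0$. Without this step your argument for~(i) stalls exactly at the rate $r^{4-2n}$ and cannot reach $\varphi\in\C^{k+2,\alpha}_{2-2\tilde\tau}$ with $\tilde\tau>n-1$.
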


As special cases of AC K\"ahler metrics, all ALE K\"ahler metrics satisfy the expansion theorem. According to the statement (i) of Theorem \ref{expanthm} in ALE K\"ahler cases,  we can define K\"ahler potential spaces with relatively "good" decay rate. Precisely, by fixing a K\"ahler metric $\omega$, then, we can define the following potential space of ALE K\"ahler metrics,
\begin{align*}
    \mathcal{H}_{-\tau} ([\omega]) = \Big\{f \in \C^\infty_{-2 \tilde{\tau}}: \omega + i\partial\overline{\partial} f >0, \ \tilde{\tau} = \min\{\tau, n-\frac{1}{2}\} \Big\}
\end{align*}
In fact, $\mathcal{H}_{-\tau} ([\omega])$ does not contain all ALE K\"ahler metrics, but it is enough for "prescribed scalar curvature" problem, as all ALE K\"ahler metrics with the same scalar curvature as $\omega$ are contained in $\mathcal{H}_{-\tau} ([\omega])$. For the statement (ii), in ALE K\"ahler case, the fall-off condition of complex structure (\ref{cxdec}) is automatically satisfied. In fact, for $n\geq 3$, in the asymptotic chart,  the complex structure $J$ coincides with the standard one $J_0$, and for $n=2$,  in the asymptotic chart,
$J = J_0 + O(r^{-3})$. One can check \cite[Lemma 2.3, Proposition 4.5]{hein2016mass} for details.

This paper is a part of Ph.D thesis of the author. The author would like to thank Professor Hans-Joachim Hein and Professor Bianca Santoro for suggesting the problem, and for constant support, many helpful comments, as well as much enlightening conversation. This work is completed while the author is supported by scholarship from University of M\"unster, WWU.

\section{Preliminary on Analysis} \label{pres}
\subsection{The Laplacian of 1-form on Riemanian cones} \label{laequ1fss}

This subsection is mainly dedicated to preparing for the proof of Theorem \ref{ddclem}. We will solve the laplacian equation of $1$-form on Riemannian cones. The fact (lemma \ref{laequ1f}) was also claimed in \cite[Lemma 3.7(ii)]{conlon2013asymptotically}. Here, we just give a detailed proof for this lemma.

Let $L$ be a closed Riemannian manifold with dimension $n-1$ ($ n\geq 4$) and $C(L) = L \times \RR_{>0}$ with standard cone metric $g_{C}= dr^2 + r^2 g$. Throughout this paper, we will apply the weighted H\"{o}lder norms on Riemannian cones. Let $T$ be a tensor field on the Riemannian cone $(C_L, g)$ and $U$ be an open domain in $C_L$, the weighted H\"older norm with order $\rho$ is defined to be the following,
\begin{align} \label{hnorm}
||T||_{C^{k,\alpha}_{\rho} (U)} = \sum_{i=0}^{k} \sup_{x \in U} \big|(r^2 +1)^{\frac{1}{2} (-\rho+i)} \nabla^i T \big|_{g} + \sup_{x, y \in U} (r^2 +1)^{\frac{1}{2}(-\rho+k+\alpha)} \frac{|\nabla^{k}T(x)- \nabla^k T(y)|_g}{|x-y|^\alpha}
\end{align}
where $\nabla$ is the Levi-Civita connection of $g$ and $|T(x)-T(y)|_g$ is defined via parallel transport minimal geodesic from $x$ to $y$. A tensor field $T$ is belong to $\C^{k,\alpha}_\rho(U)$ if the H\"older norm $||T||_{\C^{k,\alpha}_\rho(U)}$ is finite. Similarly, we can define a weighted H\"older norm on each AC Riemannian manifold as in (\ref{hnorm}) by fixing a smooth radial function $\tilde{r}$, which is obtained by smoothly extending the radial function $r$ to the whole manifold.

Let $\Delta = dd^* + d^* d$ be the Hodge-Laplacian operator on $C_L$ and $\Delta_L = d_L d^*_L + d_L^* d_L$,  the Hodge-Laplacian operator on $L$.
Let $0= \lambda'_0 < \lambda'_1 \leq \lambda'_2 \leq \ldots $ (listed with multiplicity) be the increasing sequence of eigenvalues of $\Delta_L$ on functions and $\kappa_0$ (constant), $\kappa_1, \ldots$, the corresponding eigenfuncions satisfying $||\kappa_i||_{L^2(L)} =1$, for all $i\geq 0$. Viewing $\Delta_L$ as an operator acting on the spaces of 1-forms on $L$, we immediately get a family of eigen-1-forms, $\Delta_L d_L \kappa_i = \lambda'_i d_L \kappa_i$, $i \geq 1$.  The $L^2$-normalization of $\kappa_i$ implies that $||d_L\kappa_i||_{L^2(L)}= (\lambda'_i)^{1/2} $. Besides, according to Hodge decomposition on $L$, we also have a family of coclosed eigen-1-forms, $\Delta_L \eta_j = \lambda''_j \eta_j$ $(d^*_L \eta_j =0)$, where the eigenvalues are listed as an increasing sequence, $0\leq \lambda''_1 \leq \lambda''_2 \leq \ldots$ and $||\eta_j||_{L^2(L)} =1$. In summary, we have
\begin{enumerate}
\item[(i)] Exact eigenforms: $ d_L\kappa_i$ with eigenvalue $\lambda'_i$ and $||d_L\kappa_i||_{L^2(L)}= \lambda_i^{1/2} $, for $i \geq 1$. 
\item[(ii)] Coclosed eigenforms: $\eta_j$ with eigenvalue $ \lambda''_j $ and $||\eta_j||_{L^2(L)}=1$.
\end{enumerate}
Consider an 1-form $\beta$ defined on $C(L)$, then we  can write $\beta = \kappa (r, y) dr + \eta (r,y) $. By spectral decomposition of 1-form on $L$, we obtain the Fourier series of $\kappa(r,y)$ and $\eta(r,y)$,
\begin{align} \label{dec1f}
\begin{split}
\kappa (r,y) &= \sum_{i\geq 0} f_i (r) \kappa_i (y)\\
\eta (r,y) &= \sum_{i\geq 1} g_i (r) d_L \kappa_i (y) + \sum_{j \geq 1} h_j (r) \eta_j (y).
\end{split}
\end{align}
Based on calculation in \cite[(2.14-2.15)]{cheeger1994cone} or \cite[(3.8)]{cheeger1983spectral}, we have an explicit formula for the Laplacian of 1-form $ \beta = \kappa dr + \eta$. In particular,
\begin{equation}\label{la1f}
\begin{split}
\Delta \beta = \ &dr \big( -\kappa''  -\frac{n-1}{r} \kappa' + \frac{n-1}{r^2} \kappa + \frac{1}{r^2} \Delta_{L} \kappa + \frac{2}{r^3} d^*_{L}  \eta\big)  \\
 & -\eta'' -\frac{n-3}{r} \eta' -\frac{2}{r} d_L \kappa + \frac{1}{r^2} \Delta_L \eta.
\end{split}
\end{equation}
Regarding the Laplacian equation of 1-form, we introduce exceptional sets $A,\ B,\ C$ defined in (\ref{exset1}), (\ref{exset2}), (\ref{exset3}) respectively; namely, the set of orders of homogeneous harmonic 1-forms of three different types. If we write $U(r_0) =\{x \in C_L, r(x) > r_0\}$ and $\overline{U}(r_0)$ represents its topological closure, then we have the following lemma,

\begin{lem} \label{laequ1f} Let $\theta$ be an 1-form defined on $\overline{U}(1)$ with $\theta \in \C^{k,\alpha}_{\rho}$. If $k \geq 2n+3$ and $\rho+2 \notin A \bigcup B \bigcup C$, then there exists  a solution $\beta \in \C^{k+2,\alpha}_{\rho+2} (\overline{U}(1))$ satisfying the Laplacian equation, $\Delta \beta = \theta$ and $\beta$ satisfies the estimate.
\begin{align*}
||\beta||_{\C^{k+2,\alpha}_{\rho+2} (U (2))}  \leq C ||\theta||_{\C^{k, \alpha}_{\rho}(U (1))}.
\end{align*} 
where $C$ only depends on $(n, L, k, \rho)$.
\end{lem}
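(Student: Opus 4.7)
\smallskip

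\noindent\textbf{Proof proposal.} The plan is to solve $\Delta \beta = \theta$ by separation of variables: expand both $\theta$ and the unknown $\beta$ in the Fourier bases $\{\kappa_i\}$, $\{d_L \kappa_i\}$, $\{\eta_j\}$ coming from the spectral decomposition on $L$ described in (\ref{dec1f}), and solve the resulting ODEs in $r$ mode by mode. Writing $\theta = a(r,y) dr + b(r,y)$ and expanding $a = \sum_i \alpha_i(r) \kappa_i$, $b = \sum_i \beta_i(r) d_L \kappa_i + \sum_j \gamma_j(r) \eta_j$, formula (\ref{la1f}) splits the equation $\Delta \beta = \theta$ into three decoupled families: (a) the coupled $2\times 2$ system for $(f_i, g_i)$ coming from the $\kappa_i$/$d_L\kappa_i$ modes with $i \geq 1$; (b) the scalar equation for $h_j$ from a coclosed eigenform $\eta_j$ with $\lambda''_j > 0$; (c) the scalar equation for the constant-in-$L$ part $(f_0, h_0)$, corresponding to $\kappa_0$ and to harmonic 1-forms on $L$.

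In each case the equation is an Euler-type ODE (system) whose homogeneous solutions are of the form $r^{\nu_\pm}$, where $\nu_\pm$ are roots of an indicial polynomial depending on $n$ and on $\lambda'_i$ or $\lambda''_j$. The sets $A$, $B$, $C$ of (\ref{exset1})--(\ref{exset3}) are precisely the orders arising from these indicial roots for the three families, i.e., the orders of homogeneous harmonic $1$-forms on $C_L$. Under the hypothesis $\rho + 2 \notin A \cup B \cup C$, none of these homogeneous rates coincides with $\rho+2$, so the standard variation-of-parameters formula (for mode (b), (c)) or its matrix analogue (for mode (a)) produces a unique particular solution in each mode whose decay rate is exactly two orders better than the right-hand side, together with the pointwise estimate $|f_i|, |g_i|, |h_j| \lesssim r^{\rho+2} \cdot \|\alpha_i\|_{L^\infty_{\rho}} $ (and similarly for $\beta_i, \gamma_j$) with a constant that depends on the mode only through $\lambda'_i, \lambda''_j$ in a controlled (polynomial) way.

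The main work is then to sum the modal solutions into a genuine $1$-form $\beta$ on $\overline U(1)$ and to promote the pointwise $L^\infty$-type bounds to the weighted $\C^{k+2,\alpha}_{\rho+2}$ norm. For this I would argue as follows. First, use elliptic regularity on $L$ together with Weyl's law to bound Fourier coefficients $\alpha_i, \beta_i, \gamma_j$ of $\theta$ by $\|\theta\|_{\C^{k,\alpha}_\rho}$ with a gain in $\lambda'_i, \lambda''_j$ large enough (here the assumption $k \geq 2n+3$ is used) to guarantee that the formally constructed Fourier series for $\beta$ converges absolutely and uniformly, together with its derivatives up to some finite order, on compacta of $\overline U(1)$. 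This already gives a classical solution $\beta \in \C^{2}_{\rho+2, \text{loc}}$ satisfying $\Delta \beta = \theta$. Next, to obtain the weighted $\C^{k+2,\alpha}_{\rho+2}$ bound on $U(2)$, use a standard rescaling/Schauder argument: on each annulus $A_R = \{R \leq r \leq 2R\}$ rescale $r \mapsto r/R$ so that the rescaled metric is uniformly equivalent to a fixed one, apply the interior Schauder estimate for $\Delta$ on a slightly larger annulus to the rescaled equation, and unwind the scaling to get the weighted bound on $A_R$ with a constant independent of $R$. Summing or taking sup over $R$ yields the claimed inequality.

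The main obstacle will be handling the coupled mode (a): the $2\times 2$ indicial matrix for $(f_i, g_i)$ depends on $\lambda'_i$, and I need to verify (i) that both indicial roots are distinct and real for large $i$, (ii) that the Green's matrix can be written down so that the rate $\rho+2$ is preserved with constants growing at most polynomially in $\lambda'_i$, and (iii) that the assumed exceptional-set condition rules out all possible resonances across the (infinitely many) modes uniformly. Once this polynomial growth in the spectral parameter is established, the Weyl-law argument above closes the loop; the remaining adjustments for the non-generic modes in family (c) (constants and harmonic $1$-forms on $L$, where one of the roots can be $0$ or $4-n$) are handled by the explicit formulas and are precisely what the exclusion $\rho + 2 \notin C$ is designed to avoid.
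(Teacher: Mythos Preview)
Your proposal follows essentially the same strategy as the paper's proof: separation of variables via (\ref{dec1f}), reduction to Euler-type ODEs mode by mode, summation using Weyl's law and eigenform estimates (this is exactly where $k \geq 2n+3$ enters), then an upgrade to the weighted $\C^{k+2,\alpha}$ norm via Schauder theory on the cone. Your rescaling argument on annuli is equivalent to the conical Schauder estimate the paper invokes from \cite{marshal2002deformations}.

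The one place where the paper does something you have not anticipated is precisely the ``main obstacle'' you flag. Rather than treating the coupled $2\times 2$ system for $(f_i, g_i)$ by a matrix Green's function, the paper \emph{decouples} it by the substitution $E_i = f_i - g_i'$: after an auxiliary $D_i$ is introduced, equations (\ref{laequ2})--(\ref{laequ3}) collapse first to a single scalar Euler equation (\ref{laequ4}) for $E_i$ with right-hand side $u_i - v_i'$, and then to a second scalar Euler equation (\ref{laequ5}) for $g_i$; finally $f_i = g_i' + E_i$. This is exactly why there are \emph{two} exceptional sets $B$ and $C$ attached to this family rather than one, a point your identification of $A$, $B$, $C$ with ``the three families'' slightly obscures (in fact $A$ comes from the coclosed modes alone, and $B$, $C$ both come from the exact/$dr$ modes). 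With the decoupling in hand, all three issues you list---distinct real indicial roots, explicit Green's kernels, and polynomial control in the spectral parameter---become transparent, and the polynomial-in-$\lambda$ bounds on $E_i$, $g_i$, $f_i$ follow directly as in (\ref{coefest5})--(\ref{coefest7}). Your direct matrix approach would also work in principle, but the decoupling is the clean route through.
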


\begin{proof} One of key points to solve the Laplacian equation of 1-form is to observe that the Laplacian equation is equivalent to a system of ODEs based on (\ref{dec1f}) and (\ref{la1f}). By spectrum decomposition of the Laplacian, $\beta= \kappa dr + \eta$ can be written as in (\ref{dec1f}) and similarly,  we can also represent $\theta= \theta_0 dr + \theta_1$ as,
\begin{align*}
\theta_0 = \sum_{i\geq 0} u_i (r) \kappa_i (y),  \qquad \theta_1 = \sum_{i \geq 1} v_i (r) d _L \kappa_{i} (y)  + \sum_{j\geq 1} w_j  (r) \eta_j (y),
\end{align*}
where $v_i,\  w_j \in \C^{{k,\alpha}}_{\rho +1}$ and $u_i \in \C^{k,\alpha}_{\rho}$. According to (\ref{la1f}), by comparing the coefficients, the Laplacian equation $\Delta \beta = \theta$ is equivalent to the following system of ODEs,
\begin{align}
-h''_{j} - \frac{n-3}{r} h'_j + \frac{\lambda''_j}{ r^{2}} h_j = w_j, \quad (j \geq 1)  \label{laequ1}
\end{align}
and
\begin{subequations}
\begin{align}
 &-f''_i - \frac{n-1} {r} f'_i + \frac{ n-1 +\lambda'_i }{ r^2} f_i - \frac{2\lambda_i'}{ r^3} g_i= u_i,  \label{laequ2} \\
 &-g''_i - \frac{ n-3 }{ r } g'_i + \frac{ \lambda'_i }{r^2} g_i -\frac{2}{ r } f_i = v_i, \quad (i \geq 0)  \label{laequ3}
\end{align}
\end{subequations}   
To solve equation (\ref{laequ1}), notice that the corresponding homogeneous equation has the solutions $r^{a_j^{\pm}}$, where the orders are given by
\begin{align} \label{exset1}
A= \{a_j^{\pm}-1, j\geq 1 \}, \qquad a^{\pm}_{j} = -\frac{n-4}{2} \pm \sqrt{ \Big( \frac{n-4}{2}\Big)^2  +\lambda''_j},
\end{align}
where $A$ defines the first exceptional set. If we write $\displaystyle H_j= \begin{pmatrix}h_{j} \\ h'_{j} \end{pmatrix} $, then we have the following representation formula,
\begin{align} \label{sol1}
H_j (r) = W_{j}(r) \Big( W_j^{-1}(1) H_j(1) +  \int_{1}^r W_{j}^{-1}(s) \begin{pmatrix} 0\\ w_{j} (s) \end{pmatrix} ds  \Big)
\end{align}
where the Wronskian $W_\lambda$ is given by
\begin{align*}
W_{j} (r) = \begin{pmatrix}  r^{a^{+}_{j}} & r^{a^-_{j}} \\ a^+_{j} r^{a^{+}_{j}-1} & a^-_{j} r^{a^{-}_{j}-1} \end{pmatrix}.
\end{align*}
Noting that te exceptional set can be ordered nondecreasingly as $\ldots \leq  a_2^- \leq a_1^- \leq a_1^+ \leq a_2^+ \leq \ldots$.  Without loss of generality, assume that $\rho  > 1-n$ (to ensure $\rho +3 > a_j^-$ for $j\geq 1$). Then, we can deduce the following explicit formula of $h_{j}$,
\begin{align}\label{sol2}
h_j(r) = \hat{h}_j (r) + A_{+} r^{a^+_{j}}+ A_{-} r^{a^-_{j}}.
\end{align}
In case of $ a_j^+ > \rho+3 $, $\hat{h}_\lambda$, $A_+$ and $A_-$ are given as follows, 
\begin{align}
\hat{h}_j &= \frac{1}{a^-_j - a^+_j} \bigg( r^{a^+_j} \int_{\infty}^r  s^{1-a^+_j} w_j(s) ds - r^{a^-_j} \int_{r_0}^r  s^{1-a^-_j} w_j (s) ds\bigg), \nonumber \\
A_{-} &=\frac{1}{a^{-}_{j} - a^{+}_{j} } \big( - a^+_{j} h_{j}(1) +  h'_{j}(1) \big), \label{coefsol1} \\
A_{+} &= \frac{1}{a^-_{j}-a^+_{j}}\bigg( a^-_{j}  h_{j}(1) - h'_{j}(1) + \int_{1}^\infty s^{1-a^+_{j}} w_j (s) ds \bigg). \nonumber
\end{align}
It is obvious to see that $\hat{h}_{j} (r) = O(r^{3+\rho})$.  By choosing certain values $(h_j(1), h'_j (1) )$ such that the coefficient $A_{+}$ and $A_{-}$ are vanishing, then we have $h_{j}(r) = O(r^{3+\rho})$. In case that $a^{+}_{j} < 3+\rho$, $h_j(r)$ has the same expression as (\ref{sol2}) with different $\hat{h}_j $, $A_+$ as follows
\begin{equation} \label{coefsol2}
\begin{split}
\hat{h}_j &= \frac{1}{a^-_j - a^+_j} \bigg( r^{a^+_j} \int_{1}^r  s^{1-a^+_j} w_j(s) ds - r^{a^-_j} \int_{1}^r  s^{1-a^-_j} w_j (s) ds\bigg),\\
A_{+} &= \frac{1}{a^-_{j}-a^+_{j}}\big( a^-_{j}  h_{j}(1) - h_{j}'(1) \big)
\end{split}
\end{equation}
and $A_-$ has the same formula as in (\ref{coefsol1}). The reason we exclude the exceptional set $A$ is that if $\rho +2 = a_j^\pm-1$, there exists some $\log$ terms appear in $\hat{h}_j$. There is only one special case remains to check; that is when $n =4$ and $\lambda''_j =0$. In this case, the solutions of the corresponding homogenous equation of (\ref{laequ1}) is generated by $1$ and $\log r$. Then, by similar computation as above, the solution $h_j$ can be written as,
\begin{align*}
h_j (r) =  h(1) + h'(1) \log r  + \log r \int_1^r s w_j (s) ds - \int_1^r \big( s \log s\big) w_j(s) ds
\end{align*}
The assumption $\rho+3 > 4-n =0$ ensures that $h_j (r) = O(r^{{\rho}+3})$. In conclution, if $\rho+3 \notin A$, then we have $h_j \in \C^{k+2,\alpha}_{\rho+3}$. It is also easy to see that $h_j = O(r^{3+\rho})$. 

To solve (\ref{laequ2}) and (\ref{laequ3}), we introduce an auxiliary functions, $D_i= -g_i''-(n-1)r^{-1} g'_i + {\lambda'_i}{r^{-2}} g_i $ and $E_i = f-g'$, then the equations (\ref{laequ2}) and (\ref{laequ3}) can be rewritten as follows,
\begin{subequations}
\begin{align}
-&E''_i - \frac{n-1}{r} E'_i +\frac{\lambda'_i+n-1}{r^2} E_i+D'_i = u_i ,\label{laequ2'} \\
&D_i - \frac{2}{r}E_i = v_i.  \label{laequ3'}
\end{align}
\end{subequations}
The system of equations can be reduces to
\begin{align}\label{laequ4}
-E''_i - \frac{n-3}{r} E'_i +\frac{\lambda'_i+n-3}{r^2} E_i = \vartheta_i,
\end{align}
where $\vartheta_i = u_i - v'_i \in C^{k-1,\alpha}_{\rho}$. The equation (\ref{laequ4}) can be solved by the same method as (\ref{laequ1}). Only to notice that the exceptional set is different from $A$,
\begin{align} \label{exset2}
B=\{b_i^{\pm}, i\geq 0 \}, \qquad b_i^{\pm} =-\frac{n-4}{2} \pm \sqrt{\Big(\frac{n-4}{2}\Big)^2 + \lambda'_i +n -3}.
\end{align}  
If $\rho +2 \notin B$, by similar discussion from (\ref{sol1}) to (\ref{coefsol2}), there exists a solution $E_i \in C^{k+1,\alpha}_{\rho+2}$ satisfying (\ref{laequ4}). It suffices to solve $g$ and $f$. The equation (\ref{laequ2'}) can be rewritten as
\begin{align} \label{laequ5}
-g''_i- \frac{n-1}{r} g'_i + \frac{\lambda'_i}{r^2} g_i = \varpi_i,
\end{align}
where $\varpi_i =2r^{-1} E_i +v_i \in \C^{k,\alpha}_{\rho+1}$. If we introduce another exceptional set, 
\begin{align}\label{exset3}
C=\{c_i^{\pm}-1, i\geq 1 \}, \qquad c_i^{\pm} =-\frac{n-2}{2} \pm \sqrt{\Big(\frac{n-2}{2}\Big)^2 + \lambda'_i },
\end{align}
then by the same method, assuming $\rho+2 \notin B\bigcup C$, there exists a solution $g_i \in \C^{k+2,\alpha}_{\rho+3}$. By the definition of $E_i$, we obtain that $f_i = g'_i + E_i \in \C^{k+1,\alpha}_{\rho+2}$. According to previous discussion, we have found coefficients satisfying the right decay condition of Fourier series of $\beta$. It remains to show that the Fourier series converges in $\C^{2}_{loc}$ topology. 

If $\lambda_j >0$, $j\in J$, we have the following estimte for $w_j (r)$, 
\begin{align}
w_j(r) =\int_{L} (\theta_1(r, y), \eta_j(y))_{g_L}  dy &= {(\lambda''_j)^{-\frac{k}{2}} }\int_L \big( \Delta_L^{\frac{k}{2}} \theta_1(r,y), \eta_j  \big)_{g_L} dy \nonumber \\
&\leq \vol(L)^{\frac{1}{2}} (\lambda''_j)^{-\frac{k}{2}} r^{\rho+1} || \theta (r,y) ||_{k,\alpha; \rho}. \label{coefest1} 
\end{align}
By by expression of $\hat{h}_j$ in (\ref{coefsol1}), we have
\begin{align} \label{coefest2}
|\hat{h}_j(r)| + r|\hat{h}_j'(r)| + r^2 |\hat{h}_j''(r)| \leq C(n, \rho) r^{\rho +3} \sup |r^{-\rho -1} w_j (r)|
\end{align}
Combining (\ref{coefest2}) with (\ref{coefest1}), we have the estimate
\begin{align}\label{coefest3}
|\hat{h}_j(r)| + r|\hat{h}_j'(r)| + r^2 |\hat{h}_j''(r)| \leq C (n, L, \rho) (1+\lambda_j)^{-\frac{k}{2}} r^{\rho+3} ||\theta(r,y)||_{k,\alpha; \rho}.
\end{align}
Applying Moser iteration to $\eta_j$,  we obtain that
\begin{align*}
||\eta_j||_{L^\infty(L)} \leq C(n,L) (\lambda''_j)^{\frac{n-1}{2}} ||\eta_j||_{L^2(L)} = C(n, L) (\lambda'')_j^{\frac{n-1}{2}}.
 \end{align*}
Then, the Schauder estimates for $\Delta_L$ implies that $||\eta_j||_{C^{2,\alpha}(L)} \leq C(n,L) (1+\lambda''_j)^{\frac{n+1}{2}}$; hence
\begin{align} \label{eigen1fest1}
|\eta_j|+r |\nabla \eta_j| + r^2 | \nabla^2 \eta_j |\leq C(n ,L)(1+ \lambda''_j) ^{\frac{n+1}{2}}r^{-1}. 
\end{align}
Recall the Weyl's law for differential forms \cite[Appendix by J. Dodziuk]{chavel1984eigenvalues}, $\lambda_j \sim j^{\frac{2}{n-1}}$. The Weyl's law together with (\ref{coefest3}) and (\ref{eigen1fest1}) can deduce that when $k \geq 2n+3 $,
\begin{align*}
|\hat{h}_j(r) \eta_j(y)| + r |\nabla (\hat{h}_j(r) \eta_j (y))| + r^2| \nabla^2 (\hat{h}_j (r)\eta_j (y))| \leq  C(n,L) j^{\big(-1 +\frac{2n-k}{n-1}\big)} r^{\rho+2} ||\theta (r,y)||_{k,\alpha; \rho+1},
\end{align*}
which implies that the series $\sum_{j\in J}\hat{ h}_j (r) \eta_j (y)$ converges in $\C^2_{loc} $. By choosing certain $h_j (1)$ and $h'_j(1)$ to ensure that $A_{+}= A_{-}=0$, $\sum_{j\in J} { h}_j (r) \eta_j (y)$ converges in $\C^2_{loc} $. To show the convergence of $\sum_{i\in I} {f}_i (r) \kappa_i (y) dr$ and $\sum_{i\in I} {g}_i (r) d_L \kappa_i (y)$, we deduce a similar estimate as (\ref{coefest3}) for $E_i$ at first. Recall that $\vartheta_i = u_i - v'_i $, then we have
\begin{align} 
\vartheta_i (r) &= \int_{L} \theta_0 (r,y) \kappa_i (y) dy - \lambda_i^{-1} \int_{L} (\partial_r \theta_1 (r, y), d_L \kappa_i (y) )_{g_L}dy \nonumber \\
&=  \lambda_i^{-\frac{k}{2}} \int_L \big(\Delta_L^{\frac{k}{2}} \theta_0 (r, y)\big) \kappa_i (y) dy - \lambda_i^{-\frac{k+1}{2}}  \int_L \big(\partial_r \big(\Delta_L^{\frac{k-1}{2}} \theta_1 (r,y) \big), d_L \kappa_i (y)\big)_{g_L} dy \nonumber \\
& \leq C(n, L) \lambda_i^{-\frac{k}{2}} r^{\rho} ||\theta ||_{k,\alpha; \rho}. \label{coefest4}
\end{align}
Recall the equation (\ref{laequ4}), together with (\ref{coefest4}), by the same method to derive (\ref{coefest3}), we have,
\begin{align} \label{coefest5}
|E_i(r)|+r|E'_i(r)|+r^2|E''_i (r)| \leq C(n, L,\rho) (\lambda'_i)^{-\frac{k}{2}} r^{\rho+2}  ||\theta||_{k,\alpha; \rho}.
\end{align}
Again, according to equation (\ref{laequ5}) and estimate (\ref{coefest5}), the same method shows that
\begin{align*}
\varpi_i (r) = \frac{2 E_i}{r} + v_i \leq  C(n, L, \rho) r^{\rho +1} (\lambda'_i)^{-\frac{k}{2}} r^{\rho+1} ||\theta||_{k,\alpha; \rho}.
\end{align*}
Therefore, we have
\begin{align} \label{coefest6}
|g_i(r)|+r|g'_i(r)|+r^2|g''_i (r)| \leq C(n, L,\rho) (1+ \lambda'_i)^{-\frac{k}{2}} r^{\rho+3}  ||\theta||_{k,\alpha; \rho}.
\end{align}
Combining (\ref{coefest5}), (\ref{coefest6}) and definition of $E_i$, we obtain,
\begin{align} \label{coefest7}
|f_i(r)|+r|f'_i(r)|+r^2|f''_i (r)| \leq C(n, L,\rho) (1+\lambda'_i)^{-\frac{k-1}{2}} r^{\rho+2}  ||\theta||_{k,\alpha; \rho}.
\end{align}
Also, we can derive the similar estimate for $\kappa_i$ and $d_L \kappa_i $ as in (\ref{eigen1fest1}), by Moser iteration and Schauder estimates 
\begin{equation} \label{eigenest2}
\begin{split}
|\kappa_i|+r |\nabla \kappa_i| + r^2 | \nabla^2 \kappa_i | &\leq C(n ,L)(1+ \lambda'_i) ^{\frac{n+1}{2}}; \\
|d_L\kappa_i|+r |\nabla d_L \kappa_i| + r^2 | \nabla^2 d_L\kappa_i | &\leq C(n ,L)(1+ \lambda'_i) ^{\frac{n+3}{2}}r^{-1}.
\end{split}
\end{equation}
Then, the estmates (\ref{coefest6}), (\ref{coefest7}) and (\ref{eigenest2}) together with  Weyl's law implies that, when $k \geq 2n+3$, the series $\sum_{i\in I} {f}_i (r) \kappa_i (y) dr$ and $\sum_{i\in I} {g}_i (r) d_L \kappa_i (y)$ converge in $C^2_{loc}$ topology. And the convergence of series also implies that 
\begin{align*}
||\beta||_{L^\infty} \leq C (n, L,\rho, k) r^{\rho +2}||\theta||_{k ,\alpha; \rho}.
\end{align*}
Since the Laplacian can be viewed as an elliptic operator for vector bundle $\wedge^1 T^*X$ over $X$, according to elliptic operator theory on conical manifolds \cite[Theorem 4.12]{marshal2002deformations}, we have Schauder-type estimate for the equation, $\Delta \beta = \theta$.
\begin{align*}
||\beta||_{{k+2,\alpha ;\rho+2}} \leq C (n, L,\rho, k) (  ||\theta||_{k, \alpha; \rho} + ||\beta||_{0; \rho+2} )\leq C  (n, L,\rho, k) ||\theta||_{k, \alpha; \rho}.
\end{align*}
\end{proof}

The proof of lemma \ref{laequ1f} not only works for 1-forms, but also for functions in weighted H\"older space. Also by spectral decompsition technique, we can solve $\Delta_0 u =f$ as follows. Recall that the increasing sequence $0= \lambda'_0 <\lambda'_1 \leq \lambda'_2 \ldots$ is eigenvalues of $\delta$ acting on functions, then the exceptional set is given by,
\begin{align}\label{excd}
D =\{ d_i^{\pm}, \text{ for } i=0,1,\ldots \}, \qquad d^{\pm}_{i} =-\frac{n-2}{2}\pm \sqrt{\Big(\frac{n-2}{2}\Big)^2 + \lambda'_i}.
\end{align}

\begin{cor}
Let $f$ be a function defined on $\overline{U}(1)$ with $f \in \C^{k,\alpha}_{\rho-2}$. If $k \geq $ and $\rho \notin D $, then there exists  a solution $u \in \C^{k+2,\alpha}_{\rho} (\overline{U}(1))$ satisfying the Laplacian equation, $\Delta_0 u = f$ and $u$ satisfies the estimate.
\begin{align} \label{laequfestc}
||u||_{\C^{k+2,\alpha}_{\rho} (U (2))}  \leq C ||f||_{\C^{k, \alpha}_{\rho-2}(U (1))}. 
\end{align} 
where $C$ only depends on $(n, L, k, \rho)$.
\end{cor}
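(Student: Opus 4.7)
The plan is to mimic the proof of Lemma \ref{laequ1f}, only now working entirely in the scalar sector so there is no coupling between the coefficients. First I would apply the spectral decomposition on the link, writing $f(r,y) = \sum_{i \geq 0} u_i(r) \kappa_i(y)$ with $u_i \in \C^{k,\alpha}_{\rho-2}$. For an ansatz $u(r,y) = \sum_i f_i(r) \kappa_i(y)$, the standard formula for the cone Laplacian on functions, $\Delta_0 u = -\partial_r^2 u - \frac{n-1}{r}\partial_r u + \frac{1}{r^2}\Delta_L u$, reduces the PDE $\Delta_0 u = f$ to the decoupled family of Euler-type ODEs
\begin{align*}
-f_i'' - \frac{n-1}{r} f_i' + \frac{\lambda'_i}{r^2} f_i \, = \, u_i, \qquad i \geq 0,
\end{align*}
whose homogeneous solutions are exactly $r^{d_i^{\pm}}$ for $d_i^{\pm}$ as in (\ref{excd}); the hypothesis $\rho \notin D$ is what rules out resonance with the forcing term and the associated $\log r$ corrections.

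Next I would solve each ODE by the same Wronskian/variation-of-parameters procedure as in (\ref{sol1})--(\ref{coefsol2}): split the $i$'s into the cases $d_i^+ > \rho$ (integrate from $\infty$) and $d_i^+ < \rho$ (integrate from $1$), and in both cases choose the initial data $(f_i(1), f_i'(1))$ so that the growing mode $r^{d_i^+}$ is killed. This produces a particular $\hat f_i$ of size $O(r^\rho)$ with the pointwise bound
\begin{align*}
|\hat f_i(r)| + r|\hat f_i'(r)| + r^2|\hat f_i''(r)| \, \leq \, C(n,L,\rho)\, (1+\lambda'_i)^{-k/2}\, r^\rho\, \|f\|_{\C^{k,\alpha}_{\rho-2}},
\end{align*}
obtained by extracting a factor $(\lambda'_i)^{-k/2}$ from $u_i = \langle f, \kappa_i \rangle_{L^2(L)}$ through $k$-fold integration by parts against $\Delta_L$, exactly as in (\ref{coefest1})--(\ref{coefest3}).

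Finally I would combine this with the Moser/Schauder bound $|\kappa_i|+r|\nabla \kappa_i|+r^2|\nabla^2 \kappa_i| \leq C(n,L)(1+\lambda'_i)^{(n+1)/2}$ and Weyl's law $\lambda'_i \sim i^{2/(n-1)}$ to conclude that, once $k$ is taken large enough (a threshold analogous to the $k \geq 2n+3$ of Lemma \ref{laequ1f}), the series $\sum_i \hat f_i(r)\kappa_i(y)$ converges in $\C^2_{\mathrm{loc}}$ to a function $u$ of size $O(r^\rho)$ solving $\Delta_0 u = f$. The weighted Schauder-type estimate (\ref{laequfestc}) then follows from the elliptic theory on conical manifolds \cite[Theorem 4.12]{marshal2002deformations} applied to $\Delta_0 \colon \C^{k+2,\alpha}_\rho \to \C^{k,\alpha}_{\rho-2}$, upgrading the sup bound on $u$ to the full $\C^{k+2,\alpha}_\rho$ estimate. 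The main obstacle is purely bookkeeping, namely pinning down the minimal admissible $k$ so that both the series convergence and the desired Hölder regularity are secured; analytically, everything else is a strict simplification of the 1-form case, since the auxiliary unknowns $E_i$, $D_i$ and the corresponding exceptional sets $A$, $B$ do not appear here.
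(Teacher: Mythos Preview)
Your proposal is correct and follows precisely the route the paper indicates: the corollary is stated without a separate proof, the paper merely remarking that ``the proof of lemma \ref{laequ1f} not only works for 1-forms, but also for functions in weighted H\"older space'' via the same spectral decomposition technique, which is exactly what you carry out. Your observation that the scalar case is a strict simplification (decoupled Euler ODEs, single exceptional set $D$, no auxiliary functions $E_i,D_i$) is on point, and the bookkeeping concern about the minimal $k$ is the only loose end, just as in the paper's own statement where the threshold on $k$ is left blank.
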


\subsection{Laplacian on asymptotically conical manifolds} \label{aclaequss}

The asymptotic behavior of Laplacian on ALE manifolds has been studied in many references. The main idea to solve the Laplacian equation on ALE manifolds is to apply the Fredholm property of the Laplacian operator. In Lockhart \cite[Corollary 6.5]{Lockhart1981FredholmPO}, Lockhart-McOwen \cite[Theorem 1.3]{ASNSP_1985_4_12_3_409_0} and Cantor \cite[Theorem 6.3]{cantor1981elliptic}, the Fredholm property is proved in Sobolev case for the elliptic operators that are asymptotic to the Euclidean Laplacian at infinity. The H\"older case has been discussed in \cite[section 4]{chaljub1979problemes} for real dimension 3.  In Marshall \cite[Theorem 6.9]{marshal2002deformations}, the Fredholm property of Laplacian operator has been generalized to AC manifolds on both Sobolev and H\"older cases. In this section, we will summarize the key results of the H\"older cases on AC manifolds and prove the following proposition. Recall that $(C_L, g_0)$ is a Riemannian cone and $(X, g)$ asymptotic to $(C_L, g_0)$ at infinity with a diffeomrphsim $\psi: X_\infty = X-K \rightarrow (R_0, \infty) \times L$ and $ \displaystyle |\nabla^i((\psi^{-1})^* g - g_0)|_{g_0} = O(r^{-i-\tau})$, for integers $i = 0,1,\ldots, k $.

In $X_\infty$, let $\Delta_0$ and $\Delta$ be the Laplacian operators of metrics $g_0$ and $g$ respectively and $D$, the exceptional set given in (\ref{excd}). 

\begin{pro} \label{laequac} Suppose $(X, g)$ is a complete AC manifold asymptotic to a Riemannian cone $(C_L, g)$ with $\dim_{\RR} X = n \geq 4$ and $f \in \C^{k,\alpha}_{\rho-2}$. 
\begin{enumerate}
\item[\textup(i)] Let $\rho \in (0,\infty)\backslash D$, there exists a solution $u \in \C^{k+2,\alpha}_{\rho}$.
\item[\textup{(ii)}] Let $\rho \in (2-n,0)$, there exists a unique solution $u \in C^{k, \alpha}_{\rho+2}$ of $\Delta u = f$.
\item[\textup{(iii)}] Let $\rho \in (-\infty , 2-n)\backslash D $, there exists a unique solution $u = A r^{2-n} + v$, where 
\begin{align} \label{constlaequ}
A = \frac{1}{(n-2) \vol(L)} \int_{X} f d\vol_X 
\end{align}
and $v\in \C^{k+2,\alpha}_{\tilde{\rho}}$ with $\tilde{\rho} = \max \{d_1^-, \rho, 2-n-\tau\}$. 
\end{enumerate}
\end{pro}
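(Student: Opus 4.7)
The plan is to reduce everything to the Fredholm theory of the Laplacian on weighted H\"older spaces on AC manifolds established by Marshall \cite[Theorem 6.9]{marshal2002deformations}, which asserts that $\Delta: \C^{k+2,\alpha}_\rho(X) \to \C^{k,\alpha}_{\rho-2}(X)$ is Fredholm precisely when $\rho \notin D$. It then remains to determine the kernel and cokernel in each weight range, using the maximum principle and integration by parts, together with the standard jumping formula as $\rho$ crosses an indicial root.

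Parts (i) and (ii), where $\rho \in (2-n,\infty)\setminus D$, follow this pattern directly. For (ii) with $\rho \in (2-n,0)$, any $u \in \C^{k+2,\alpha}_\rho$ with $\Delta u = 0$ decays to $0$ at infinity and thus vanishes identically by the maximum principle applied on an exhaustion of $X$, so $\ker\Delta = 0$. Dually, the cokernel pairs with harmonic functions decaying like $r^{2-n-\rho}$ or faster; the dual weight again lies in $(2-n,0)$, so the same argument makes the cokernel trivial, and $\Delta$ is an isomorphism. For (i) with $\rho > 0$, the dual weight $2-n-\rho$ is below $2-n$, so the cokernel sits inside harmonic functions decaying faster than $r^{2-n}$; Green's identity shows any such function is $L^2$ with finite Dirichlet energy, hence constant, hence zero by its decay. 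Surjectivity onto $\C^{k,\alpha}_{\rho-2}$ follows; uniqueness is lost because the kernel acquires a new harmonic cone-function every time $\rho$ crosses a positive indicial root.

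For part (iii) with $\rho < 2-n$, the constant function now sits in the formal cokernel, producing a one-dimensional obstruction. To see that the particular value of $A$ is forced, plug the ansatz $u = A r^{2-n} + v$ (with $v$ decaying faster than $r^{2-n}$) into $\Delta u = f$ and integrate over $B_R\subset X$; as $R\to\infty$ the boundary term evaluates, using $\Delta_0 r^{2-n}=0$ on $C_L$ and $g-g_0=O(r^{-\tau})$, to $A(n-2)\vol(L)$, yielding the stated formula. For construction, pick a smooth cutoff $\chi$ supported in $X\setminus K$ with $\chi\equiv 1$ near infinity, set $u_0 = A\chi\cdot r^{2-n}$, and let $\tilde f := f - \Delta u_0$. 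Since $r^{2-n}$ is $\Delta_0$-harmonic, a direct computation yields $(\Delta-\Delta_0)(r^{2-n}) = O(r^{-n-\tau})$, so $\tilde f \in \C^{k,\alpha}_{\rho'-2}$ with $\rho' = \max\{\rho,\,2-n-\tau\}$, and by the choice of $A$ we have $\int_X \tilde f\, d\vol_g = 0$. This removes the unique obstruction in the weight slot $(d_1^-, 2-n)$, so Fredholm theory in that slot produces $v \in \C^{k+2,\alpha}_{\tilde\rho}$ solving $\Delta v = \tilde f$ with $\tilde\rho = \max\{d_1^-,\, \rho,\, 2-n-\tau\}$. Setting $u = u_0 + v$ completes the construction; uniqueness within the ansatz class follows because any difference of two solutions lies in $\C^{k+2,\alpha}_{\tilde\rho}$ with $\tilde\rho<2-n<0$, and the maximum-principle argument again forces it to vanish.

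The main obstacle I expect is the bookkeeping in part (iii) around the indicial root $d_0^- = 2-n$: one must justify the integration by parts rigorously, so that $A$ is genuinely pinned down by $\int_X f$ rather than just a formal candidate, and verify that the metric perturbation contributes precisely the $2-n-\tau$ term to $\tilde\rho$ (no worse, which is what makes $\tilde f$ integrable and places it just above the next indicial root when $\tau$ and $\rho$ are favorable). The other ingredients are largely routine once Marshall's Fredholm framework is in hand.
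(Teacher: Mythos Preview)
Your proposal is correct and follows essentially the same route as the paper: invoke the Fredholm theory from Marshall, kill the kernel at negative weights via the maximum principle, identify the cokernel via the duality $\im(\Delta,\delta)^\perp = \ker(\Delta,-n-\delta)$, and for (iii) subtract off $A\,\chi r^{2-n}$ with $A$ chosen so that the remainder integrates to zero, then solve in the slot just below $2-n$. The only cosmetic differences are that for (i) you argue via Green's identity and finite Dirichlet energy where the paper simply notes the dual weight $2-n-\rho$ is negative and applies the maximum principle directly, and that you introduce an explicit cutoff $\chi$ where the paper works with a smooth extension of $r^{2-n}$ and computes $\int_X \Delta r^{2-n} = (n-2)\vol(L)$ head-on.
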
 

In the following we write $\ker (\Delta, \delta+2)=\ker(\Delta: \C^{k+2,\alpha}_{\delta+2} \rightarrow \C^{k, \alpha}_{\delta})$ and $\im (\Delta, \delta)=\im(\Delta: \C^{k+2,\alpha}_{\delta+2} \rightarrow \C^{k, \alpha}_{\delta})$. 
The key observation is that the estimate (\ref{laequfestc}) can be modified to obtain a similar estimate for elliptic operators asymptotic to $\Delta_0$ on AC Riemannian manifolds; namely, the scale broken estimate referring to \cite[Theorem 6.7]{marshal2002deformations}. Precisely, fixing $X_{2R} = \{x\in X, r(x)> 2R\}$ with $R>  R_0$,  for all $u \in \C^{k+2,\alpha}_{\rho}$, we have 
\begin{align} \label{scalbroest}
||u||_{\C^{k+2,\alpha}_{\rho}} \leq C( ||\Delta u||_{\C^{k,\alpha}_{\rho-2}} + ||u||_{\C^0 (X_{2R})}).
\end{align}
It can be proved that the Laplacian operator $\Delta: \C^{k+2,\alpha}_{\rho} \rightarrow \C^{k,\alpha}_{\rho-2}$ on $X$ is Fredholm if $\rho \notin D$. To sketch the proof of Fredholm property, we observe that the estimate (\ref{scalbroest}) implies that the set $\im (\Delta, \rho)$ is closed. To see this, let $\{f_i\}$ be a Cauchy sequence in $C^{k,\alpha}_{\rho-2}$ contained in $\im (\Delta, \rho)$. Notice that there exists a closed subspace $\mathcal{B}_\rho $ such that $\C^{k+2,\alpha}_{\rho} = \mathcal{B}_{\rho} \oplus \ker (\Delta, \rho) $ (because $\ker (\Delta, \rho)$ is finite-dimensional), then we have a bounded sequence of preimages $\{u_i\}$ in $\mathcal{B}_{\rho}$ with $\Delta u_i =f_i$. According to Arzel\`{a}-Ascoli and (\ref{scalbroest}), by taking a subsequence of $\{u_i\}$, we obtain a Cauchy sequence in $\C^0(X_{2R})$. Hence, $\{u_i\}$ is also a Cauchy sequence by passing to the subsequence. Let $f$, $u$ be the convergence of $\{f_i\}$ and $\{u_i\}$ respectively, then $f =\Delta u$ is belong to $\im(\Delta, \rho)$. Based on self-adjointness of $\Delta$, the Laplacian $\Delta: \C^{k+2,\alpha}_{\delta+2} \rightarrow \C^{k, \alpha}_{\delta}$  
\begin{align}  \label{fhdual}
 \im (\Delta, \delta)= \{f \in \C^{k,\alpha}_{\rho}, (f , h )_{L^2}=0 \text{ for all } h\in \ker ( \Delta, -n -\delta) \}
\end{align}
Then, the Fredholm property of $\Delta$ follows from the fact that $\ker(\Delta, \rho) $ is finite-dimensional.

\begin{proof}[Proof of Proposition \ref{laequac}]  By the maximal principle of harmonic functions, we can show that $\ker(\Delta, \rho)=0$ if $\rho <0$. In the cases (i) and (ii), by Fredholm property of $\Delta$ and (\ref{fhdual}), when $\rho \in (-n,-2)$, $\im (\Delta, \rho)$ is annihilated by $\ker (\Delta, -n-\rho) =0$. Hence, $\im (\Delta, \rho) = \C^{k,\alpha}_{\rho}$ for $\rho\in (-n,-2)$. The uniqueness in the case (ii) is directly from the fact $\ker (\Delta, \rho) = 0$ for $\rho \in (-n,-2)$.
.

For the case (iii), notice that $\Delta r^{2-n} = \Delta_0 r^{2-n} + (\Delta-\Delta_0) r^{2-n}= O(r^{-n-\tau})$ and
\begin{align*}
\int_{X} \Delta r^{2-n} d\vol_X &= -\lim_{r \rightarrow \infty} \int_{L(r)} \langle\nabla r^{2-n}, n \rangle_g d\vol_{L(r)}\\
& = \lim_{r \rightarrow \infty}(n-2) r^{1-n} \int_{L(r)} g(\partial r, \partial r) d\vol_{L(r)} \\
& = \lim_{r \rightarrow \infty}(n-2) \vol(L) +O(r^{-\tau}) =  (n-2) \vol(L).
\end{align*}
Consider the function $f- A \Delta r^{2-n}$, where the constant $A$ is chosen to be (\ref{constlaequ}). Observing that $f-A \Delta r^{2-n} = O(r^{\max\{-n-\tau, \rho-2\}})$ and $\int_{X} f-A\Delta r^{2-n}$  =0. Without loss of generality, we assume $\max\{-n-\tau, \rho-2\} \in (d_1^- -2, -n)$. Then (\ref{fhdual}) implies that the only annihilating functions are constant functions. The integration is vanishing indicates that $f-A \Delta r^{2-n}$ is in  $\im (\Delta, \tilde{\rho}-2)$ with $\tilde{\rho} = \max \{d^-_1, \rho, 2-n-\tau \}$; hence, we have solution $u = Ar^{2-n} +v$ with $v\in \C^{k+2,\alpha}_{\tilde{\rho}}$.
\end{proof}

\section{Proof of $dd^c$ lemma} \label{ddclems}

\subsection{Proof of Theorem \ref{ddclem} (i)} \label{pfddbar} Recall that $(X, J, g)$ is an AC K\"ahler manifold asymptotic to a Ricci-flat K\"ahler cone $(C_L, J_0, g_0)$ at infinity with a diiffeomorphism $\psi: X_\infty = X-K \rightarrow C_L -B_{R_0}$. In the end $X_\infty $, there are two sets of differential operators. In particular, we write $d$, $\partial$, $\overline{\partial}$ and $d^*$, $\partial^*$, $\overline{\partial}^*$  as the differential opertators and the dual operators with respect to $(g,J)$, which also induces the Laplacian operators $\Delta$ in the asymptotic chart. Also let $d_0$, $\partial_0$, $\overline{\partial}_0$, $d_0^*$, $\partial_0^*$, $\overline{\partial}_0^*$  and $\Delta_0$ be the Laplacian operators with respect to $(g_0,J_0)$.   Let $\omega \in \C^{k,\alpha}_{-\delta}$ be an exact real $(1,1)$-form on $X$. Then, by \cite[Theorem 3.11]{conlon2013asymptotically}, there exists a real 1-form $\alpha \in \C^{k+1,\alpha}_{1-\delta}$ such that $\omega = d\alpha$. Decompose $\alpha$ to $(1,0)$-form and $(0,1)$-form with respect to $J$, $\alpha^{1,0}$ and $\alpha^{0,1}$. Consider the following Laplacian equation for $\phi$,
\begin{align} \label{ddbareq1}
 \frac{1}{2}\Delta \phi = \overline{\partial}^* \alpha^{0,1},
\end{align}
Based on the Proposition \ref{laequac}, there exists a solution $\phi \in \C^{k+2,\alpha}_{2-\delta}$ for (\ref{ddbareq1}). Define $\xi = \alpha^{0,1}-\overline{\partial} \phi $. Due to the low decay rate, it is not necessary that $\xi =0$. The idea of the proof is to reduce the decay rate of $\xi$ and to find $\hat{\phi}$ such that $\overline{\partial} \hat{\phi} = \alpha^{0,1}$.  It is observed that 
\begin{align} \label{harcon}
\overline{\partial}^* \xi = 0, \qquad \overline{\partial} \xi =0.
\end{align}
Hence, $\xi $ is a $J$-harmonic (0,1)-form in $\C^{k+1,\alpha}_{1-\delta}$. If $1-\delta$ is nonnegative, we can reduce the order of $\xi$  as follows. Assuming that $\theta = \Delta_0 \xi = (\Delta_0 - \Delta ) \xi = O(r^{-1-\delta-\tau})$, according to lemma \ref{laequ1f}, there exists a solution $\beta = O(r^{1-\delta-\tau})$ such that $\Delta_0 \beta= \theta$ in the asymptotic chart of $X$.  Hence, we can write $\xi$ in the asymptotic chart of $X$ as 
\begin{align*}
\xi = \xi_0 + \beta, 
\end{align*}where $\xi_0$ is a harmonic 1-form  with respect to $\Delta_0$ of decay rate $1-\delta$ and $\beta$ is a 1-form of decay rate $1- \delta- \tau$. Notice that $\xi_0$ is not necessary a $(0,1)$-form with respect to $J$, but the $(1,0)$ part of $\xi_0$ has the decay rate $1-\delta-\tau$. Then, according to \cite[Lemma 2.27]{cheeger1994cone} or referring to \cite[Lemma B.1]{hein2017calabi}, the assumption that $0\leq 1-\delta <1$ implies there exists a harmonic function $\psi_0 $ with respect to $( J_0, g_0)$ such that $ d \psi_0 = \xi_0$ with $\psi_0 = \C^{k+2,\alpha}_{2-\delta}$ and $\partial \psi_0 =\C^{k+1,\alpha}_{1-\delta-\tau}$, $\overline{\partial} \psi_0 =\C^{k+1,\alpha}_{1-\delta}$. Extend $\psi_0$ to a smooth function $\tilde{\psi}_0$ on $X$. According to proposition \ref{laequac}, by solving equation $\Delta b = -\Delta \tilde{\psi}_0 $,  there exists a harmonic function $\psi$ with respect to  $(J,g)$ such that,
\begin{align*}
 \psi =\tilde{ \psi_0 }+ b.
\end{align*}
Since in asymptotic chart of $X$, $-\Delta \tilde{\psi}_0 = (\Delta_0 - \Delta) \tilde{\psi}_0 = O(r^{-\delta-\tau}) $, we have $b$ is a function of class $\C^{k+2,\alpha}_{2-\delta-\tau}$. Now, consider a new (0,1)-form given by
\begin{align} \label{reddecay}
\xi_1 = \xi - \overline{\partial} \psi,
\end{align}
which also satisfies conditon (\ref{harcon}) $\overline{\partial}\xi_1 = \overline{\partial}^* \xi_1=0$. And $\xi_1$ has the following decay rate,
\begin{align*}
\xi_1 &= \xi_0 + \beta - \overline{\partial}\psi\\
&=   \beta + \partial \tilde{\psi}_0 + \overline{\partial} \tilde{\psi}_0 - \overline{\partial} \psi\\
&= \beta + \partial \tilde{\psi}_0 -\overline{\partial} b,
\end{align*}
where $\beta$, $\partial \psi_0 \in \C^{k+1,\alpha}_{1-\delta-\tau}$. Hence, $\xi_1 \in \C^{k+1,\alpha}_{1-\delta -\tau}$. Repeating this process until the decay rate of 1-form to be negative, precisely, we can find $\hat{\xi}= \xi- \overline{\partial}\hat{\psi}$ such that $\hat{\psi} \in \C^{k+2,\alpha}_{2-\delta}$ and $\hat{\xi} \in \C^{k+1,\alpha}_{1-\delta-n\tau}$  with $1-\delta-n\tau <0$. To simplify our notation, we replace $\xi$ with the 1-form of negative decay rate. In the following proof, we assume that $\xi\in \C^{k+1,\alpha}_{1-\delta}$  ($1-\delta <0$) with $\overline{\partial}^* \xi= \overline{\partial}\xi=0$.
We introduce the following lemma.

\begin{lem} \label{decay1f} Let $k$ be a large positive integer and $1-\delta <0$, 
\begin{enumerate}
\item[(i)] Let $\xi$ is a real 1-form on $X$ of class $\C^{k+1,\alpha}_{1-\delta}$. Assuming that $\xi$ is a harmonic 1-form on $X$, then,
$\xi \in \C^{k+1,\alpha}_{-2n+2}$.
\item[(ii)] Let $\xi$ is a harmonic (0,1)-form on $X$ of class $\C^{k+1,\alpha}_{1-\delta}$ with $\overline{\partial} \xi = 0$, then $\xi\in \C^{k+1,\alpha}_{-2n+1}$  
\end{enumerate}
\end{lem}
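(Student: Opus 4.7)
The plan is a bootstrap on the weight of $\xi$. Each step converts Hodge-harmonicity on $(X,g)$ into $\Delta_0$-harmonicity on the end modulo a remainder with better decay, and the terminal exponent is read off from a gap in the exceptional set of Lemma~\ref{laequ1f}. In one step, on $X_\infty$ I write $\Delta_0 \xi = (\Delta_0 - \Delta)\xi$; since $g-g_0$ and $J-J_0$ are $O(r^{-\tau})$ with $k$ derivatives, this right-hand side lies in $\C^{k-1,\alpha}_{-1-\delta-\tau}$. Lemma~\ref{laequ1f} (perturbing $\delta$ by an arbitrarily small amount, if necessary, to avoid the discrete exceptional set $A\cup B\cup C$) then yields $\beta \in \C^{k+1,\alpha}_{1-\delta-\tau}$ on the end with $\Delta_0 \beta = \Delta_0 \xi$. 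Set $\xi' := \xi - \beta$; then $\Delta_0 \xi' = 0$ on the end, and the separation-of-variables argument from the proof of Lemma~\ref{laequ1f} expands $\xi'$ as a convergent series of homogeneous $\Delta_0$-harmonic 1-forms $r^s\cdot(\text{eigenform on }L)$ with $s\in A\cup B\cup C$ and $s\leq 1-\delta$.

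The decisive ingredient is a gap statement for $A\cup B\cup C$ coming from the Ricci-flat K\"ahler structure. Since $L$ is Sasaki-Einstein with $\Ric_{g_L}=2(n-1)g_L$, Lichnerowicz-Obata gives $\lambda'_1\geq 2n-1$, and a parallel Bochner-Weitzenb\"ock estimate on $L$ bounds $\lambda''_1$ from below in the same range. Plugging these into the explicit formulas for $a_j^\pm$, $b_i^\pm$, $c_i^\pm$ I expect to verify that no exponent of $A\cup B\cup C$ lies strictly in $(2-2n,0)$ once one discards the spurious value $b_0^-=3-2n$, which enters the formal $(f_0,g_0)$-system but is killed by $d_L\kappa_0=0$ when $\kappa_0$ is constant, so that the true $f_0\,dr$-mode has decaying exponent $1-2n$ as determined by the decoupled $f_0$-equation. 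This forces $\xi'\in\C^{k+1,\alpha}_{2-2n}$, and combining with $\beta\in\C^{k+1,\alpha}_{1-\delta-\tau}$ and iterating the bootstrap finitely many times---each step smoothly cuts $\beta$ off to a globally defined 1-form on $X$ via a transition annulus and absorbs the compactly supported harmonicity defect into the next iteration using Proposition~\ref{laequac}---brings $\xi$ itself into $\C^{k+1,\alpha}_{2-2n}$, giving (i). In (ii), the further constraint $\overline{\partial}\xi'=0$ restricts $\xi'$ on the cone to $\overline{\partial}$ of a harmonic function of leading decaying exponent $2-2n$, and $\overline{\partial}$ lowers the exponent by one, yielding the sharper bound $1-2n$.

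The main obstacle will be making the gap statement fully rigorous, most critically the dismissal of the formal exponent $b_0^-=3-2n$: this requires revisiting the ODE system (\ref{laequ2})-(\ref{laequ3}) for the $i=0$ spherical mode and checking that with $\kappa_0$ constant the $f_0$-equation decouples from $g_0$ and has homogeneous solutions $r$ and $r^{1-2n}$ rather than $r^{b_0^\pm}$. A secondary technicality is the cutoff step: $\beta$ is defined only on $X_\infty$ and must be extended globally without spoiling the asymptotic bound, which is handled by the standard cutoff-and-correct procedure together with Proposition~\ref{laequac}.
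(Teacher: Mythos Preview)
Your approach to (i) is essentially the paper's: write $\Delta_0\xi=(\Delta_0-\Delta)\xi$ on the end, peel off a remainder $\beta$ via Lemma~\ref{laequ1f}, and read off the decay of the $\Delta_0$-harmonic piece $\xi_0=\xi-\beta$ from the spectral gap furnished by $\lambda'_1\geq 2n-1$ and $\lambda''_1\geq 4(n-1)$ on the Sasaki--Einstein link. Your dismissal of the formal exponent $b_0^-=3-2n$ is correct and matches the paper's reasoning: since $\kappa_0$ is constant, the $g_0\,d_L\kappa_0$ contribution vanishes and the surviving $f_0$-equation has homogeneous solutions $r,\,r^{1-2n}$. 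The paper packages this as an explicit list of three types of homogeneous $\Delta_0$-harmonic 1-forms rather than arguing through $A\cup B\cup C$, but the content is identical. The cutoff-and-globalize step you describe is not needed: the iteration lives entirely on the end and uses only that $\xi$ itself is globally $\Delta$-harmonic.

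For (ii) there is a genuine gap. First, $\xi'=\xi-\beta$ is not $\overline\partial$-closed; only $\xi$ is. What you actually get (after passing to the $J_0$-$(0,1)$ part, which costs a further $O(r^{-\tau})$ since $J-J_0=O(r^{-\tau})$) is that the \emph{leading homogeneous term} $\xi_0^h$ of $\xi_0$ satisfies $\overline\partial_0\xi_0^h=O(r^{-d-1-\tau})$, not that it vanishes. Second, and more seriously, you assert that $\overline\partial_0$-closedness forces $\xi_0^h$ to be $\overline\partial_0$ of a harmonic function, but you give no argument for this, and it is not how the paper proceeds. The paper instead observes that from the classification in (i) the only homogeneous harmonic modes of rate strictly above $1-2n$ are the Type~(II) forms at rate $b_i^-=2-2n$; it then writes out the $J_0$-$(0,1)$ part of such a form explicitly and computes that $\overline\partial_0$ of it is a \emph{nonvanishing} homogeneous form of rate $b_i^- -1$, contradicting $\overline\partial_0\xi_0^h=O(r^{-d-1-\tau})$. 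That computation, not an abstract exactness statement, is what pushes the rate down to $1-2n$. You should replace your sketch of (ii) with this direct verification.
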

 
\begin{proof}
Let $\xi$ be a real harmonic 1-form on $X$. According to lemma \ref{laequ1f}, $\xi$ can be represented outside a compact set of $X$ as follows,  
\begin{align}\label{decayred1f}
\xi = \xi_0 + \beta
\end{align}
where $\xi_0$ is a harmonic form with respect to $\Delta_0$ of the same decay rate as $\xi$ and $\beta $ is a 1-form of decay rate at most $1-\delta-\tau$.  By assumption that $X$ is asymptotic to the conical manifold $C_L$, $\xi_0$ can be expressed on basis of spectral decomposition with respect to Laplacian operator on link $L$,
\begin{align*} 
\xi_0 &=  \sum_{i \geq 0 } [ f_i (r) \kappa_i (y) dr + g_i (r) d_L \kappa_i(y) ] + \sum_{j\in J} h_j (r) \eta_j (y)\\
 &= \sum_{i\geq 0} [d(g_i (r) \kappa_i (y))+ E_i (r) \kappa_i (y) dr] + \sum_{j\geq 1} h_j (r) \eta_j (y). 
\end{align*}
where $E_i(r) = f_i(r) - g'_i(r)$. Then, the function $g_i$, $E_i$ satisfy the following equations, ($\dim_{\RR}X =2n$)
\begin{subequations}
\begin{align}
-&E''_i - \frac{2n-3}{r} E'_i +\frac{\lambda'_i+2n-3}{r^2} E_i= 0 ,\label{harcon1} \\
-&g_i'' - \frac{2n-1}{r} g_i' +\frac{\lambda'_i}{r^2} g_i = \frac{2}{r}E_i.  \label{harcon2}
\end{align}
\end{subequations}
By solving the equations (\ref{harcon1}) and (\ref{harcon2}), the harmonic 1-forms with respect to $\Delta_0$ of negative decay rate  at infinity have the following two types,
\begin{enumerate}
\item[(I)] Let $\lambda_i$ be all eigenvalues of $\Delta_L$ for functions on $L$,
\begin{align*}
d(r^{c_i^-} \kappa_i),\quad \text{with } c^-_i = -(n-1) - \sqrt{(n-1)^2 + \lambda'_i }
\end{align*}
\item[(II)] Let $\lambda_i$ be all \textit{positive} eigenvalues of $\Delta_L$ for functions on $L$,
\begin{align*}
 B_i r^{b_i^-} \kappa_i dr + 2 d(r^{b_i^- +1} \kappa_i ), \quad \text{with } b_i^{-} =-(n-2) - \sqrt{(n-2)^2 + \lambda'_i +2n -3},
\end{align*}
and $B_i$ is a constant given by $B_i = - (b_i^-)^2 - 2n b_i^- +\lambda_i +1-2n $.
\end{enumerate}
For the coclosed part of decomposition, consider the following equation,
\begin{align} \label{harcon3}
-h''_{j} - \frac{2n-3}{r} h'_j + \frac{\lambda_j}{ r^{2}} h_j =0, 
\end{align}
By solving (\ref{harcon3}), we have the third type of harmonic 1-form,
\begin{enumerate}
\item[(III)] Let $\lambda_j$ be the eigenvalues of $\Delta_L$ for the co-closed 1-forms on $L$,
\begin{align*}
r^{a^-_j} \eta_j, \quad \text{with } a_j^- = -(n-2)- \sqrt{ (n-2)^2  +\lambda''_j}
\end{align*}
\end{enumerate}
To estimate the eigenvalues, we notice that $L$ is a Sasakian-Einstein manifold with Ricci curvature $\Ric_{g_L} = 2(n-1) g_L $. By Lichnerowicz-Obata first eigenvalue theorem and  \cite[Lemma B.2]{hein2017calabi}, let $\lambda'_1$ and $\lambda''_1$ be the first positive eigenvalue of closed and coclosed 1-form, we have the following estimate, 
\begin{align*}
\lambda'_{1} (L)   \geq 2n-1, \qquad \lambda''_{1} (L)  \geq 4n-4.
\end{align*}
Then, we obtain the greatest possibe decay rate of harmonic 1-forms for all these three types. For type (I), the forms decays to rate at most $-2n+1$ at infinity. For type (II), the forms decays to rate at most $-2n+2$. And the forms of type (III) decays to rate at most $-2n+1$. Hence, we obtain $\xi_0 = O(r^{-2n +2})$. If $1-\delta-\tau \geq -2n+2 $, then, by expression (\ref{decayred1f}), $\xi  $ is of decay rate $1-\delta-\tau$. Again, noticing that $\beta$ is the solution of $\Delta_0 \beta = (\Delta_0 -\Delta) \xi$, there exists a $d$-closed solution $\beta$ of decay rate $1-\delta-2\tau$. Repeating the process, we can finally obtain that $\xi = O(r^{2-2n})$. 

For the second statement, let $\xi$ be a harmonic $(0,1)$-form on $X$ with $\overline{\partial} \xi = 0$. Notice that $\xi$ also can be written as in (\ref{decayred1f}), $\xi = \xi_0 + \beta$. According to (i), we can assume $\xi$, $\xi_0$ of rate $-d \leq 2-2n$ and $\beta$ of rate $-d -\tau$. $\xi$ can be decomposed to $(1,0)$ and $(0,1)$ form with respect to $J_0$,
\begin{align*}
    \xi = \xi^{1,0}_{J_0} + \xi^{0,1}_{J_0}, \quad, \xi^{1,0}_{J_0}= \frac{\xi - iJ_0 \xi}{2}, \ \xi^{0,1}_{J_0} = \frac{\xi + iJ_0 \xi}{2}.
\end{align*}
Since $\xi$ is a $(0,1)$-form with respect to $J$, $\xi- i J_0 \xi = i (J-J_0) \xi = O(r^{-d -\tau})$. The $(1,0)$ part of $\xi$ with respect to $J_0$ has rate $-d -\tau$, so does $\xi_0$. Then, the highest order terms of $\xi_0$ is a homogeneous harmonic $(0,1)$-form with respect to $J_0$, denoted by $\xi_0^{h}$. The fact $\overline{\partial}\xi=0 $ implies that $\overline{\partial}_0  \xi_0^h = O (r^{-d-1 -\tau})$, If we assume the rate of $\xi^h_0$ is greater than $1-2n$, based on the classification of harmonic 1-forms, $\xi^h_0$ must be of type (II). Then, $\xi_0^h$ can be written as,
\begin{align*}
    \xi^h_0 =  C (2b^-_i + 6 - 4n) r^{b^-_i } \kappa_i \overline{\partial}_0 r + 2 r^{b^-_i +1}\overline{\partial}_0 \kappa_i,
\end{align*}
where $i\geq 1$ and $b_i^- \geq 1-2n$. By taking differentiation, we have
\begin{align*}
    \overline{\partial}_0 \xi^h_0 = C(4b_i^- +4-4n) r^{b_i^-} \overline{\partial}_0 \kappa_i \wedge \overline{\partial}_0 r.
\end{align*}
It is easy to check that $\overline{\partial}_0 \xi^h_0$ is a nonvanishing homogeneous form of rate $-d-1 $, which contradicts against $\overline{\partial}_0  \xi_0^h = O (r^{b^-_i-1 -\tau})$. Therefore, we have $\xi = O(r^{1-2n})$.
\end{proof}

According to lemma \ref{decay1f}, we reduce the  proposition \ref{ddclem} to the case of fast decay rate. Let $\hat{\omega} = 2 \re \partial \xi $ be a closed real (1,1) form of decay rate at most $-2n+1$. Also notice that $\tr_\omega \hat{\omega}= 2 \re \overline{\partial}^* \xi =0$. Recall that there exists a natural radial function defined in $X$ and let $B_{R} = \{x\in X | r(x) \leq R\}$, for some very large positive real number $R$. Then, it is easy to check that 
\begin{align}\label{ddcint}
2 (n-2) !\int_{B_R} |\hat{\omega}|^2 d\vol =\int_{B_R} d(2\re \xi  \wedge \hat{\omega} \wedge \omega^{n-2}) = \int_{\partial B_R} 2 \re \xi \wedge \hat{\omega} \wedge \omega^{n-2}.
\end{align}
The decay conditions of $\hat{\omega}$ and $\xi$ imply that the integration in (\ref{ddcint}) tends to zero as $R$ goes to infinity. Hence, we have $\hat{\omega} =0$. In conclusion, the exact real (1,1)-form $\omega$ can be expressed as 
\begin{align*}
\omega = 2 \re \partial \alpha^{0,1} = 2 \re \partial  ( \xi + \overline{\partial} \phi) = \hat{\omega} + 2i \partial \overline{\partial} \im \phi = dd^c \im \phi,
\end{align*}
where $\phi \in \C^{k+2, \alpha}_{2-\delta}$. Therefore, we complete the proof of proposition \ref{ddclem}.

\subsection{Proof of Theorem \ref{ddclem} (ii): an obstruction of ddbar lemma}\label{ddclem'ss}  
Let $(X,J)$ be an AC K\"ahler manifold asymptotic to a Ricci-flat K\"ahler cone $(C_L, J_0)$ and $K$ be the compact set in $X$ such that $X\backslash K \cong C_L \backslash \overline{B(R_0)}$. We apply the notation as in section \ref{pfddbar}. Let $\omega $ be the real exact (1,1)-form on $X\backslash K$ with $\omega = d\alpha$, where $\alpha$ is a real 1-form on $X\backslash  K$. Then, $\alpha$ can split into the sum of a (0,1)-form and a (1,0)-form with respect to $J$, $\alpha = \alpha^{1,0} + \alpha^{0,1}$. Here $\overline{\partial}^* \alpha^{0,1}$ is a complex function defined on $X\backslash K$ of class $\C^{k,\alpha}_{-\delta}$. By extending $\overline{\partial}^* \alpha^{0,1}$ to a function $f \in \C^{k,\alpha}_{-\delta}$ on the whole manifold $X$. Then the Laplacian equation (\ref{ddbareq1}) can be rewritten as $\Delta \phi = 2 f$. According to Proposition \ref{laequac}, we have a solution $\phi \in \C^{k+2,\alpha}_{2-\delta}$. Hence, we can define a (0,1)-form on $X\backslash K$, $\xi= \alpha^{0,1}-\overline{\partial} \phi$. It is also easy to check $\overline{\partial} \xi = 0$,  $\overline{\partial}^* \xi =0 $. Based on the proof of Theorem \ref{ddclem} (i), we can reduce the rate of $\xi$ at infinity as in (\ref{reddecay}); i.e. we can find a harmonic function $\psi \in \C^{k+2,\alpha}_{2-\delta}$ such that $\hat{\xi} = \xi - \overline{\partial} \psi$ is a (0,1)-form of class $\C^{k+1,\alpha}_{1-\delta- m \tau}$ with $1- \delta -m \tau<0$ and $\overline{\partial} \hat{\xi} =0$, $\overline{\partial}^* \hat{\xi} =0 $. Then,  
\begin{align} \label{ddbar'eq1}
    \omega = 2 \re (\partial \alpha^{0,1}) = 2 \re (\partial \hat{\xi} + \partial \overline{\partial} \phi + \partial \overline{\partial} \psi) = 2 i\partial \overline{\partial} \im (\phi +\psi) + 2\re (\partial \hat{\xi}).
\end{align}
The above expression of $\omega$ (\ref{ddbar'eq1}) implies that the obstruction of ddbar lemma is the harmonic 1-form $\hat{\xi}$. Unfortunately, the obstruction term, $ \re (\partial \hat{\xi})$, cannot be vanishing in general, see counter example \ref{ctexddbar'}. However, the decay rate of the term can be reduced to enough high order. According to Lemma \ref{decay1f}, the decay rate of $\hat{\xi}$ is at most $1-2n$. Hence, $\omega = i\partial \overline{\partial} f + O(r^{-2n})$ for some real function $f \in \C^{k+2,\alpha}_{2-\delta}$, which completes the proof of theorem \ref{ddclem} (ii).

\begin{rem} \label{remobs} Fixing $X\backslash K$ as above, we assume that  $J_0 =J$ in this remark.
An interesting fact we observed in the proof of theorem \ref{ddclem} (ii) is that the obstruction of ddbar lemma is described by the class $\mathcal{H}^{0,1}_{\overline{\partial}, \overline{\partial}^*}/ \overline{\partial} \mathcal{H}$, where $\overline{\partial} \mathcal{H}$ is the $\overline{\partial}$-image of complex valued harmonic functions on $X\backslash K$ asymptotic to zero at infinity and
\begin{align*}
     \mathcal{H}^{0,1}_{\overline{\partial},\overline{\partial}^*} = \{\xi \in \Omega^{0,1} (X \backslash K) | \ \overline{\partial} \xi = 0,\ \overline{\partial}^* \xi =0 \text{ and } \xi \text{ decays to 0 at infinty }\}.
 \end{align*}
 The fact directly follows from \ref{ddbar'eq1} and the classification of harmonic 1-forms on metric cones as discussed in the proof of Lemma \ref{decay1f}. 
\end{rem}

\begin{ex} \label{ctexddbar'}
 In this example,  we discuss the ddbar lemma on $\CC^n\backslash \{0\}$. For $n\geq 3$, any exact real $(1,1)$-form $\omega$ satisfies the ddbar lemma; i.e. $\omega = dd^c f$ for a real function function $f$. In the higher dimension cases,  One refers to \cite[Lemma 5.5]{goto2012calabi} or \cite[Corollary A.3]{conlon2013asymptotically} for details. However, in the case of $n=2$, the ddbar lemma doesn't hold. According to remark \ref{remobs}, the obstruction of ddbar lemma is given by $\mathcal{H}^{0,1}_{\overline{\partial}, \overline{\partial}^*}/ \overline{\partial} \mathcal{H}$. Then, we can calculate the dimension of the spaces, $\mathcal{H}^{0,1}_{\overline{\partial}, \overline{\partial}^*}$ and $\overline{\partial} \mathcal{H}$ for each growth rate. For $k\geq 0$, we have
 \begin{align*}
     \dim_{\CC} (\mathcal{H}^{0,1}_{\overline{\partial},\overline{\partial}^*})_{-2-k} = k(k+1), \quad \dim_{\CC} (\overline{\partial} \mathcal{H})_{-2-k} = k^2.
 \end{align*}
 Let $\omega$ be an exact real $(1,1)$-form of decay rate $-\delta$, then there exists a real function $f $ of decay rate $2-\delta$ such that,
 \begin{align*}
     \omega = dd^c f + \theta_{-4} + \theta_{-5} + O(r^{-6}).
 \end{align*}
 And we can write down $\theta_{-4}$ and $\theta_{-5}$ explicitly,
 \begin{align*}
     \theta_{-4} = \re( C_{-3} \partial \xi_{-3} )= \re\bigg\{ C_{-3} \partial \bigg( \frac{\bar{z}_2 d\bar{z}_1 - \bar{z}_1  d\bar{z}_2}{(|z_1|^2+|z_2|^2)^2} \bigg)\bigg\},
 \end{align*}
 where $C_{-3}$ is a complex constant and the form $\xi_{-3}$ is a representative of $\mathcal{H}^{0,1}_{\overline{\partial}, \overline{\partial}^*}/ \overline{\partial} \mathcal{H}$ of decay rate $-3$. Similarly, $\theta_{-5}$ can be written as follows,
 \begin{align*}
     \theta_{-5} &= \re ( C_{-4,1} \xi_{-4,1} + C_{-4,2} \xi_{-4,2} ) \\
     &= \re\bigg\{ C_{-4,1} \partial \bigg( \frac{\bar{z}_1 \bar{z}_2 d\bar{z}_1 - \bar{z}^2_1  d\bar{z}_2}{(|z_1|^2+|z_2|^2)^3} \bigg) + C_{-4,2} \partial \bigg( \frac{ \bar{z}^2_2 d\bar{z}_1 - \bar{z}_1 \bar{z}_2 d\bar{z}_2}{(|z_1|^2+|z_2|^2)^3}\bigg)\bigg\}, 
 \end{align*}
 where $ C_{-4,1}$ and $C_{-4,2}$ are complex constant. In Joyce's book \cite[Theorem 8.9.2]{joyce2000compact}, the statement of ddbar lemma on $\CC^2/\{0\}$, which states that $\omega= dd^c  f + \theta_{-4}$, is wrong, as we can find infinite error terms. 
\end{ex}

\section{The Mass formula on AC K\"ahler Manifolds}
In this section, we dedicate to generalize the Hein-LeBrun's mass \cite[Theorem C]{hein2016mass} formula to AC K\"ahler manifolds asymptotic to Ricci-flat K\"ahler cones. In section \ref{ACtoLCY}, we introduce some basic set-ups on the manifolds. By introducing a system of coframes at infinity, we calculate the Chern connection forms and the curvature forms of canonical line bundle. In section \ref{massacsec},  the generalized definition of mass will be given in the  AC Riemannian manifolds. And we complete the proof of mass formula in AC K\"ahler manifolds asymptotic to Ricci-flat K\"ahler cones. 

\subsection{AC K\"ahler manifolds Asymptotic to Locally Calabi-Yau Cones} \label{ACtoLCY}

Let $(C_L, J_0)$ be a K\"ahler cone with Ricci-flat metric $g_0$. The associated link $L$ has positive Ricci curvature $\Ric(g_L) = (2n-2) g_L$. According to Myers's Theorem, the fundamental group $\pi_1$ is finite. Hence, there exists a universal covering $\widetilde{L}$ with finite deck transformation $\Gamma$ acting on $\widetilde{L}$. Let $(C_{\widetilde{L}}, \widetilde{g}_0)$ be the metric cone of $\widetilde{L}$, then $\widetilde{C}_{L}$ is a finite covering of $C_L$ with $\widetilde{C}_{{L}}/ \Gamma \cong C_L$. By pulling back the complex structure on $C_L$, the covering space $(\widetilde{C}_{L}, \widetilde{J}_0)$ is also a K\"ahler cone with Ricci-flat metric $\widetilde{g}_0$. Since $(\widetilde{C}, \widetilde{J}_0, \widetilde{g}_0)$ is K\"ahler Ricci-flat with trivial fundamental group,  then there exists a nowhere vanishing holomorphic section of canonical bundle $K_{\widetilde{C}_{L}}$, $\widetilde{\Omega}_0$ and $\tilde{\omega}_0^{n} = i^{n^2} \widetilde{\Omega}_0 \wedge \overline{\widetilde{\Omega}}_0$. $\widetilde{\Omega}_0$ induces a \textit{multi-valued} nonvanishing holomorphic section, $\Omega_0$, of $K_{C_L}$ on $C_L$ through the covering map; i.e., there exists a group representation $\kappa: \Gamma \rightarrow U(1)$ such that
\begin{align} \label{mulvalue}
    \gamma^*\widetilde{ \Omega}_0 = \kappa(\gamma) \widetilde{\Omega}_0, \qquad \gamma \in \Gamma.
\end{align}
In other words, the multi-valued section  $\Omega_0$ has $|\Gamma|$ branches and each branch differs by a constant factor $\kappa(\gamma)$, $\gamma \in \Gamma$.

Let $(C_L, J_0, g_0)$ be a Ricci-flat K\"ahler cone with the link $L$.  Let $(X, J, g)$ be an AC K\"ahler manifold asymptotic to $C_L$ together with the diffeomorphism $\Psi : X_\infty= X-K \rightarrow C_L - B_R$, where $B_R = \{x \in C_L, r(x) \leq R\}$. We also assume that $g$ decays to $g_0$ in the end $X_\infty$ with rate $-\tau = 1-n-\epsilon$ and $J$ decays to $J_0$ at infinity. 

In order to make analysis in the setting, we introduce a local frame system for K\"ahler cones. Recall that the link $L$ is a \textit{Sasakian} manifold with metric $g_L$. Let $\partial r$ be the normal vector field in radial direction, there exists a canonical vector field over $L$ defined by $\xi = J_0(r \partial r)$, namely, the \textit{Reeb vector field} on $L$. And the \textit{contact 1-form} $\eta$ is the dual of $\xi$ with respect to the metric $g_L$. The contact 1-form $\eta$ defines a subbundle of tangent bundle on the link $L$, namely $D = \ker \eta \rightarrow L$. Restricting $J_0$ on the subbundle $D$ defines an almost complex structure on $D$, which guarantees the existence of a local basis $(m_i, \overline{m}_i)$ for $D$, $i=1,\ldots, n-1$, where $m_i$ is  an (1,0) type vector with respect to $J_0|_D$. For any point in $L$, there exists a neighborhood $U$ and local basis $(\xi; m_i, \overline{m}_i)$ for the tangent bundle $T_L (U)$. Let $m_0 = r\partial r -i \xi$, then $(m_i, \overline{m}_i)_{i=0}^{n-1}$ defines a basis on $T_{C_L}(U)$.  According to the fact $r\partial r$ is a global real holomorphic vector field on $C_L$, the scaling map of $C_L$ is holomorphic. Hence, the local basis $(m_i, \overline{m}_i)_{i=0}^{n-1}$ is defined in $U \times \RR_{>0}$ by pulling forward the scaling map. Let $\{v_i, \overline{v}_i\}_{i=0}^{n-1}$ be the normalizing basis by defining $v_i = r^{-1} m_i$ and $v_i$, $\overline{v}_i$. are (1,0), (0,1) type vectors according to $J_0$. Let 1-forms, $\mu^i$, $\overline{\mu}^i$ be the dual basis of $v_i$, $\overline{v}_i$, then $J_0$ can be written explicitly as
\begin{align*}
J_0 = i \mu^i \otimes v_i - i \bar{\mu}^i \otimes \bar{v}_i.
\end{align*} 
Let $\overline{\nabla}$ be the Levi-Civita connection of $(C_L, g_0)$. One can easily check that 
\begin{align} \label{conncal1}
\overline{\nabla}_{ \partial r}v_i =\overline{\nabla}_{\partial r} \bar{v}_i=0, \qquad \overline{\nabla}_{ \partial r}\mu^i =\overline{\nabla}_{\partial r} \bar{\mu}^i=0
\end{align}
and $\overline{\nabla} J_0 =0$ also implies that $\overline{\nabla}_T v_i$ and $\overline{\nabla}_T \bar{v}_i $ are vectors of type $(1,0)$ and $(0,1)$ respectively for any tangent vector fields $T$ of $C_L$.

In the end $X_\infty$, the frames $\{\mu^i, \bar{\mu}^i\}$ and $\{v_i, \bar{v}_i\}$ are still defined by diffeomorphism $\Psi$. Precisely, there exists a finite open covering on link $L = \bigcup U_k$ such that $X_\infty \subseteq \bigcup V_k$, where $V_k = \Psi^{-1}(U_k \times \RR_{>R})$. And on each open set $V_k$, we have local frames $\{\mu^i, \bar{\mu}^i\}$ and $\{v_i, \bar{v}_i\}$. We call the data $(V_k, \{\mu^i, \bar{\mu}^i\}, \{v_i, \bar{v}_i\})$ a system of \textit{asymptotic local frames} on $X$.
A quick calculation shows that $J-J_0$ has decay rate at least $-\tau$. Precisely, if we write $\nabla$ as Levi-Civita connection of $g$, then by $J$ (resp. $J_0$) is $\nabla$-parellel (resp. $\nabla_0$-parellel), 
\begin{align} \label{cxconnre1}
 \overline{\nabla}(J- J_0) = -(\nabla-\overline{\nabla}) J.
\end{align}
If we write the difference of two Levi-Civita connection as $ A=\nabla- \overline{\nabla}$, $A$ can be viewed as a tensor on $X$. The tensor $A$ can be represented with respect to the system of asymptotic local frame $\{v_i, \bar{v}_i\}$, for instance, $(\nabla-\overline{\nabla})_{v_j}v_i = A^k_{ji} v_k + A^{\bar{k}}_{ji} \bar{v}_k $. Noticing that $|A|= O(r^{-\tau-1})$, if we also write $J$ based on a system of asymptotic local frames, then (\ref{conncal1}) implies that $|\overline{\nabla}_{\partial r} (J-J_0)|= O(r^{-\tau-1}) $. Hence, along each ray, the derivative of each coefficient of $J-J_0$ has decay rate $-\tau$. Therefore, we have $J-J_0 = O(r^{-\tau})$. Explicitly, $J$ can be written as the following tensor,
\begin{align*} 
J= J_0 + i\J^i_{\bar{j}}  \bar{\mu}^{j}& \otimes v_i +i\K^i_j  \mu^j \otimes v_i\\
&-i\overline{ \J^i_{\bar{j}}}  \mu^{j} \otimes \bar{v}_i -i\overline{ \K^i_j}  \bar{\mu}^j \otimes \bar{v}_i, 
\end{align*}
with $\J^i_{\bar{j}}$, $ \K^i_j  = O(r^{-\tau})$. According to $J^2=-1$ and comparing the terms of decay rate $-\tau$, we have
\begin{align*}
    \K^i_j =0 \quad \mod O(r^{-2\tau}).
\end{align*}
Therefore,
\begin{align}\label{cxfr1}
    J= J_0 + i\J^i_{\bar{j}}  \bar{\mu}^{j}& \otimes v_i - i\overline{ \J^i_{\bar{j}}}  \mu^{j} \otimes \bar{v}_i + O(r^{-2\tau})
\end{align}

According to (\ref{cxconnre1}), we can derive the relationship between $A$ and $\overline{\nabla} \J$,
\begin{align*}
    -(\nabla- \overline{\nabla})J_0 = & -2i A^{\bar{j}}_{i\bar{k}}\mu^i \otimes \bar{\mu}^k \otimes \bar{v}_j - 2i A^{\bar{j}}_{ik} \mu^i \otimes {\mu}^k \otimes \bar{v}_j\\
    & +2i A^{j}_{\bar{i}\bar{k}} \bar{\mu}^i \otimes \bar{\mu}^k \otimes {v}_j + 2i A^{j}_{\bar{i} k} \bar{\mu}^i \otimes {\mu}^k \otimes {v}_j
\end{align*}
and 
\begin{align*}
    \overline{\nabla}(J-J_0) = i(\overline{\nabla}\J)^j_{\bar{k}, i} + i (\overline{\nabla}\J)^j_{\bar{k}, \bar{i}} - i (\overline{\nabla}{\J})^{\bar{j}}_{k, i}- i(\overline{\nabla}{\J})^{\bar{j}}_{k, \bar{i}}.
\end{align*}
The relation $\overline{\nabla}(J-J_0) =-(\nabla- \overline{\nabla})J_0 +O(r^{-2\tau-1})$ implies that,
\begin{align}\label{cxconnre2}
    (\overline{\nabla}\J)^j_{\bar{k}, i} = 2 A^{j}_{i\bar{k}} + O(r^{-2\tau-1}), \qquad 
    (\overline{\nabla}\J)^j_{\bar{k}, \bar{i}} = 2 A^{j}_{\bar{i}\bar{k}} + O(r^{-2\tau-1})
\end{align}
The fact that $A$ is symmetric implies that   
\begin{align} \label{symoften}
    (\overline{\nabla}\J)^j_{\bar{k}, \bar{i}}- (\overline{\nabla}\J)^j_{\bar{i}, \bar{k}} = O(r^{-2\tau-1}) 
\end{align}
Recall that, on the cone $C_L$, there exists a nowhere vanishing multi-valued holomorphic volume form $\Omega_0$. By restricting to local charts $V_k$, $\Omega_0$ can be expressed as a single-valued form for each $\gamma \in \Gamma$ with respect to the asymptotic local frame of $V_k$,
\begin{align*}
\big(\Omega_0 |_{V_k} \big)_\gamma = \kappa(\gamma) f \mu^0 \wedge \cdots \wedge \mu^{n-1},
\end{align*}
where $\kappa(\gamma)$ is defined in (\ref{mulvalue}). And the condition $\overline{\nabla}_{\partial r} \Omega_0 =0  $, together with (\ref{conncal1}), implies that $f$ is bounded on $V_k$. It is possible to define a smooth $(n, 0)$ form $\Omega$ on $X_\infty$ with respect to $J$ which tends to $\Omega_0$ at infinity with decay rate $-\tau$. Let $\varpi^i =  (\mu^i - iJ \mu^i)/2$ be the $(1,0)$ part of $\mu^i$ with respect to $J$,
\begin{align}
\big(\Omega|_{V_k}\big)_\gamma &= \kappa(\gamma) f \varpi^0 \wedge \ldots \wedge \varpi^{n-1} \nonumber \\
&=\big(\Omega_0|_{V_k}\big)_\gamma +  \frac{\kappa(\gamma)}{2}\sum_{i=0}^{n-1}(-1)^{i-1}  f \J^i_{\bar{j}} \bar{\mu}^j \wedge \mu^0 \wedge \ldots \wedge \widehat{\mu^i} \wedge \ldots \wedge \mu^{n-1} + O(r^{-2\tau}) \label{n0expr1}
\end{align}
It's easy to check that $(\Omega|_{V_k}, V_k )$ can be patched together to obtain a multi-valued form $\Omega$ on the end, as $\Omega|_{V_k}$ can be viewed as $(n,0)$ projection of $\Omega_0$ in each $(V_k, J)$. The fall-off condition of $\Omega$ to $\Omega_0$ indicates that $\Omega$ is nowhere vanishing if the radial function $r$ large enough. Without loss of generality, we assume that $\Omega$ is nowhere vanishing on $X_\infty$. We can rewrite the formula (\ref{n0expr1}) on $V_k$ as follows,
\begin{align}\label{n0expr2}
\Omega = \Omega_0 + i \langle J-J_0 , \Omega_0 \rangle + O(r^{-2\tau}). 
\end{align}
where $\langle J-J_0 , \Omega_0 \rangle$ means the contraction of tensor fields by viewing $J-J_0$ and $\Omega_0$ as tensor fields on $V_k$. Let $h$ be the standard metric on canonical bundle of $X$ defined to be $h(\mathcal{S})= |\mathcal{S}\wedge \overline{\mathcal{S}}|/ \omega^n $ for any section $\mathcal{S}$ of canonical bundle on $X$. Let $D_h$ be the Chern connection of $h$, then there exists a connection 1-form $\theta$ defined on $X_\infty$ as $D_h \mathcal{S} = \theta \otimes \mathcal{S}$. Locally, we can take $\mathcal{S} = \Omega_\gamma$, where $\Omega_\gamma$ is a single-valued local section of canonical bundle. Recall that $(0,1)$ part of the covariant derivative $D^{0,1}_h =\bar{\partial}$, then $D^{(1,0)} \Omega_\gamma =\bar{\partial} \Omega_\gamma = d\Omega_\gamma$. By writing $D^{0,1}_h \Omega_\gamma = \alpha \otimes \Omega_\gamma$, in the following lemma, we derive an explicit formula of $\alpha$, in which we can see that $\alpha$ is independent of the choice of $\gamma$ modeling the higher decay terms. 
\begin{lem} \label{01conn}
Let $\alpha$ be a $(0,1)$ part of connection 1-form satisfying  $D_h^{0,1} \Omega_\gamma = \alpha \otimes \Omega_\gamma$, then in each asymptotic local frame $\{V_k, \mu^j \}$, 
\begin{align*}
    \alpha|_{V_k} = - \sum_{i,j} A^{i}_{i\bar{j}} \bar{\mu}^j  + O(r^{-2\tau-1})
\end{align*}
\end{lem}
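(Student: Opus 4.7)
The plan is to extract $\alpha$ from the identity $D_h^{0,1}\Omega_\gamma = \alpha\otimes\Omega_\gamma$. Because $D_h^{0,1}=\overline{\partial}_J$ on sections of $K_X$, and because there are no $(n+1,0)$-forms on a complex $n$-dimensional manifold, we have $\overline{\partial}_J\Omega_\gamma = d\Omega_\gamma$. Hence the problem reduces to computing $d\Omega_\gamma$ to leading order and reading off the $(0,1)$-form factor against $\Omega_\gamma$.

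I would next apply the expansion (\ref{n0expr2}) together with the fact that each branch $\Omega_{0,\gamma}$ is a single-valued parallel section of $K_{C_L}$ restricted to $V_k$, so $d\Omega_{0,\gamma}=0$. This gives
\begin{equation*}
d\Omega_\gamma \;=\; i\,d\langle J-J_0,\Omega_{0,\gamma}\rangle \;+\; O(r^{-2\tau-1}).
\end{equation*}
Since $d\Omega_\gamma$ is of pure type $(n,1)$ with respect to $J$ and $J - J_0 = O(r^{-\tau})$, modulo $O(r^{-2\tau-1})$ it is enough to project the right-hand side onto its $(n,1)_{J_0}$-component and to match this component against $\alpha \wedge \Omega_{0,\gamma}$.

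The main computation is then an expansion in the asymptotic local frame. Using (\ref{n0expr1}) and distributing $d$, one obtains three classes of terms: (a) those where $d$ hits $f_\gamma\J^i_{\bar{j}}$; (b) those where $d$ hits $\bar{\mu}^j$; and (c) those where $d$ hits $\mu^{0\ldots\widehat{i}\ldots(n-1)}$. Contributions of types (b) and (c) are expressible through the Levi-Civita connection forms of the cone; invoking the pointwise identity $d\Omega_{0,\gamma}=0$ on each branch allows them to be traded against the $v_k(f_\gamma)\J^i_{\bar{j}}$-pieces of (a), which they cancel. What remains is a sum of the form $\big(-\tfrac12\sum_{i,j}(\overline{\nabla}\J)^i_{\bar{j},i}\,\bar{\mu}^j\big) \wedge \Omega_{0,\gamma}$. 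Applying the identity $(\overline{\nabla}\J)^i_{\bar{j},i} = 2A^i_{i\bar{j}} + O(r^{-2\tau-1})$ from (\ref{cxconnre2}) and comparing coefficients yields $\alpha|_{V_k} = -\sum_{i,j}A^i_{i\bar{j}}\bar{\mu}^j + O(r^{-2\tau-1})$.

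The main obstacle is the combinatorial sign-bookkeeping inside the $(n,1)_{J_0}$-projection: verifying that the contributions from $d\bar{\mu}^j$ and $d\mu^{0\ldots\widehat{i}\ldots(n-1)}$, each individually of the correct order $O(r^{-\tau-1})$, really do cancel against the $v_k(f_\gamma)$-pieces arising from (a), so that only the trace $\sum_i(\overline{\nabla}\J)^i_{\bar{j},i}$ survives. This cancellation is ultimately a reflection of $d\Omega_{0,\gamma}=0$, but arranging it cleanly — so that the combinatorial constants and signs collapse to exactly the stated formula — is the only genuinely delicate step; the tensor-to-connection conversion via (\ref{cxconnre2}) is then routine.
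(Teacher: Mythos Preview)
Your overall strategy coincides with the paper's: compute $d\Omega_\gamma$ from the expansion (\ref{n0expr1})--(\ref{n0expr2}), extract the $(n,1)$-component, and convert to $A$ via (\ref{cxconnre2}). Your observation that projecting onto $(n,1)_{J_0}$ automatically discards the $(n-1,2)_{J_0}$-piece is cleaner than the paper's explicit verification in (\ref{d10part2}). However, the heart of your sketch---the claimed cancellation of (b), (c) against the $v_k(f_\gamma)\J^i_{\bar j}$-pieces of (a)---does not work as stated, and this is precisely the delicate step.

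The issue is twofold. First, $d\Omega_{0,\gamma}=0$ only encodes $\overline{\nabla}_{\bar v_a}\Omega_0=0$; it says nothing about $\overline{\nabla}_{v_a}\Omega_0$, which is what actually appears in the $(n,1)_{J_0}$-projection. Second, the grouping you describe is off: the term (b) (coming from $d\bar\mu^j$) and \emph{part} of (c) do not cancel against anything---they are exactly the Christoffel corrections needed to promote the coordinate derivative $v_i(\J^i_{\bar j})$ in (a) to the covariant trace $(\overline{\nabla}\J)^i_{\bar j,i}$. What is left over after forming the covariant derivative is a residual term $f\J^i_{\bar j}\bigl[(\log f)_i-\sum_k\overline{\Gamma}^k_{ik}\bigr]$, which is the paper's (\ref{d10part3}); this vanishes \emph{only} because $\Omega_0$ is $\overline{\nabla}$-parallel in all directions, equivalently because of the Calabi-Yau normalization $|f|^2=\det((g_0)_{i\bar j})$. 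You assert parallelism at the outset but then invoke only the weaker consequence $d\Omega_{0,\gamma}=0$ at the crucial moment. The paper avoids this trap by writing $d=e^A\wedge\overline{\nabla}_{e_A}$ from the start, so that the two pieces $\langle\overline{\nabla}(J-J_0),\Omega_0\rangle$ and $\langle J-J_0,\overline{\nabla}\Omega_0\rangle$ separate cleanly, and then kills the second explicitly via the Calabi-Yau identity. Reorganizing your argument along these lines---or simply using parallelism of $\Omega_0$ rather than its closedness---closes the gap.
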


\begin{proof}
 Observe that the operator $d$ can be represented as $d= \mu^i \wedge \overline{\nabla}_{v_i} + \bar{\mu}^i \wedge \overline{\nabla}_{\overline{v}_i} $. Then,  $\Omega_0$ is a holomorphic volume form on $(C_L, J_0)$ implies that
\begin{align*}
0 = d \Omega_0 = \bar{\mu}^i \wedge \overline{\nabla}_{\bar{v}_i} \Omega_0.
\end{align*}
Hence, $\overline{\nabla}_{\bar{v}_i} \Omega_0 =0$. Together with (\ref{n0expr2}), we have 
\begin{align}
d \Omega &=d\Omega_0 + i d \langle J- J_0, \Omega_0  \rangle + O(r^{-2\tau-1})\nonumber\\
& = \frac{i}{2} \bar{\mu}^i \wedge \langle \overline{\nabla}_{\bar{v}_i} (J-J_0), \Omega_0 \rangle  \label{d10part1} \\
& \qquad + \frac{i}{2} \mu^i \wedge \big( \langle \overline{\nabla}_{v_i} (J-J_0), \Omega_0 \rangle + \langle  J-J_0, \overline{\nabla}_{v_i}\Omega_0 \rangle \big) +O(r^{-2\tau-1}). \nonumber
\end{align}
To simplify the expression in (\ref{d10part1}), we apply (\ref{cxconnre1}) or  (\ref{cxconnre2}) and (\ref{symoften}). Then, we have
\begin{equation}\label{d10part2}
\begin{split}
i \bar{\mu}^i \wedge \langle \overline{\nabla}_{\bar{v}_i} (J-J_0), \Omega_0 \rangle &= i\bar{\mu}^i \wedge \langle  (\nabla- \overline{\nabla})_{\bar{v}_i} J_0 , \Omega_0   \rangle + O(r^{-2\tau-1}) \\
&= \kappa(\gamma) \sum_{i,j,k}(-1)^i 2f A_{\bar{i} \bar{j}}^{k} \bar{\mu}^i \wedge \bar{\mu}^j \wedge \mu^0 \wedge \ldots \wedge \widehat{\mu^k} \wedge \ldots \wedge \mu^n + O(r^{-2\tau-1})  \\
&=O(r^{-2\tau-1}). 
\end{split}
\end{equation}
Let $\overline{\Gamma}$ be the Christoffel symbol of $ \overline{\nabla}$, then a quick calculation shows that
\begin{align} \label{d10part3} 
i\mu^i \wedge \langle J-J_0, \overline{\nabla}_{v_i} \Omega_0  \rangle = \kappa(\gamma) \sum_{i,j,k} \K^i_{\bar{j}} (f_i - f \overline{\Gamma}^k_{ik}) \bar{\mu}^{{j}} \wedge \mu^0 \wedge \ldots \wedge \mu^{n-1},
\end{align}
and
\begin{align} 
i\mu^i \wedge \langle \overline{\nabla}_{v_i} (J-J_0),  \Omega_0  \rangle &= i \mu^i \wedge \langle (\nabla - \overline{\nabla})_{v_i} J_0, \Omega_0  \rangle + O(r^{-2 \tau -1 }) \nonumber \\
&=-2 \kappa(\gamma) \sum_{i,j} f A_{i \bar{j}}^i \bar{\mu}^j \wedge \mu^0 \wedge \ldots \wedge \mu^{n-1} + O(r^{-2 \tau-1}). \label{d10part4} 
\end{align}
Then, (\ref{d10part2})-(\ref{d10part4}) imply that $D^{0,1}_h \Omega = \alpha \otimes \Omega$ with 
\begin{align}\label{d10part5}
\alpha|_{V_k} = \frac{1}{2} \sum_{i,j,k} \big( \K^i_{\bar{j}} (\log f)_i - \K^i_{\bar{j}} \overline{\Gamma}^{k}_{ik}- 2 A^i_{i \bar{j}} \big)\bar{\varpi}^j + O (r^{-2\tau-1})
\end{align}
The formula (\ref{d10part5}) can be simplified further by considering the holomorphic volume form condition $\nabla_{\bar{v}_i}\Omega_0=0$,
\begin{align} \label{d10partcon}
    \nabla_{\bar{v}_i}\Omega_0 = \kappa(\gamma) ((\log f)_{\bar{i}} - \sum_{k}\overline{\Gamma}^{k}_{\bar{i}k}) f \mu^1 \wedge \ldots \wedge \mu^n  
\end{align}
The base K\"ahler metric $\omega_0 = g_0 (J_0\cdot, \cdot)$ can be written as $\omega_0 = i (g_0)_{i\bar{j}} \mu^i \wedge \bar{\mu}^j$. The relation $i^{n^2}\Omega_0 \wedge \overline{\Omega}_0 =\frac{1}{n!} \omega_0^n $ implies that $|f|^2 = \det((g_0)_{i\bar{j}})$, then
\begin{align*}
    (\log|f|^2 )_i & = (g_0)^{j\bar{k}}  (g_0)_{j\bar{k},i} \\
    & = (g_0)^{j\bar{k}}\big(\overline{\Gamma}^{p}_{ij}(g_0)_{p\bar{k}}+ \overline{\Gamma}^{\bar{q}}_{i\bar{k}}(g_0)_{j\bar{q}} \big) \\
    & = \sum_j \overline{\Gamma}^{j}_{ij} + \sum_{k}\overline{\Gamma}^{\bar{k}}_{i\bar{k}}.
\end{align*}
Therefore, combining with (\ref{d10partcon}),
\begin{align*}
    (\log f)_i - \sum_k \overline{\Gamma}^k_{ik} & = (\log |f|^2)_i - (\log \bar{f})_i - \sum_k \overline{\Gamma}^k_{ik} \\
    & = (\log |f|^2)_i - \sum_{k} \overline{\Gamma}^{\bar{k}}_{i\bar{k}} - \sum_k \overline{\Gamma}^k_{ik} =0
\end{align*}
we complete the proof of the lemma.
\end{proof}

Therefore, after fixing a Ricci-flat K\"ahler cone $(C_L, J_0, g_0)$, (\ref{cxconnre2}) implies that $\alpha$ only depends on the complex structure $J$ after modeling higher decay terms. By assuming that $D^{1,0} \Omega = \beta \otimes \Omega$, we have 
\begin{align*}
\partial h(\Omega) &= \langle D^{1,0} \Omega, \Omega\rangle + \langle \Omega, \overline{D^{0,1} \Omega} \rangle \\
&= h(\Omega) \beta +h(\Omega) \overline{\alpha}.
\end{align*}
Hence, we have $\beta = -\overline{\alpha} + \partial \log h(\Omega)$. Let $\theta_\omega$ be the real part of $-i\theta$, we obtain the following expression,
\begin{align}\label{acconnform1}
\theta_\omega = 2 \im \alpha -\frac{1}{2} d^C \log \frac{\omega^n}{|\Omega_0 \wedge \bar{\Omega}_0|} + O(r^{-2\tau-1}).
\end{align}
Recall that, in $X_\infty$, the Ricci form   is given by $\rho = d \theta_\omega$ and the first Chern class is $c_1 = [\rho/\pi]$. By adding a copy of link $L$ to the end of $X$, we have the following long exact sequence,
\begin{align} \label{lexseq1}
\cdots \rightarrow H_{dR}^1(L) \rightarrow H^2_c (X) \xrightarrow{\iota}  H_{dR}^2 (X) \rightarrow H_{dR}^2 (L)  \rightarrow \cdots 
\end{align}
Noting that $L$ is the link of a Ricci-flat K\"ahler cone $C_L$, It is known that $L$ is Sasakian-Einstein with $Ric_{g_L} = 2(n-1) g_L$. According to the standard Bochner technique, we have $H^1_{dR} (X)$ vanishes. 
\begin{lem} \label{cptclasslm}
Let $ [\varrho] \in H^2_{dR} (X)$ where $\varrho$ is a closed 2-form with decay rate $-\delta$, $\delta> 2$. Then, there exists a unique preimage of $[\varrho]$ in $H^2_c(X)$, denoted by $\iota^{-1} [\varrho]$.
\end{lem}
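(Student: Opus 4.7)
The plan is to use the long exact sequence (\ref{lexseq1}) directly: existence of a preimage will follow from showing that the restriction of $[\varrho]$ to $H^2_{dR}(L)$ vanishes, and uniqueness from the fact that $H^1_{dR}(L) = 0$.

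For existence, the map $H^2_{dR}(X) \to H^2_{dR}(L)$ in (\ref{lexseq1}) is induced by pulling back via the embeddings $\phi_r: L \hookrightarrow X_\infty$, $y \mapsto \psi^{-1}(r, y)$, for any fixed $r > R_0$. Since all the $\phi_r$ are smoothly isotopic inside $X$, the cohomology class $[\phi_r^*\varrho] \in H^2_{dR}(L)$ is independent of $r$. The key point is then to exploit the decay $|\varrho|_{g_0} = O(r^{-\delta})$. Decomposing $\varrho$ on the end into its purely tangential part $\varrho_0$ (the $L$-directions) and a $dr$-component, the scaling of the cone metric $g_0 = dr^2 + r^2 g_L$ forces $|\phi_r^*\varrho|_{g_L} = r^2 |\varrho_0|_{r^2g_L} \leq C r^{2-\delta} \to 0$ as $r \to \infty$, because $\delta > 2$. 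In particular the $L^2(L, g_L)$-norm of $\phi_r^*\varrho$ tends to zero. Since $L$ is closed, Hodge theory bounds the harmonic representative in $L^2$ by the $L^2$-norm of any representative, so the harmonic part of $[\phi_r^*\varrho]$ has vanishing $L^2$-norm. Being independent of $r$, the class $[\phi_r^*\varrho]$ must therefore be zero, and by exactness $[\varrho]$ lies in the image of $\iota$.

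For uniqueness, the preceding term in (\ref{lexseq1}) is $H^1_{dR}(L)$, so it suffices to check that $H^1_{dR}(L)=0$. This is immediate from the Sasaki-Einstein condition $\mathrm{Ric}(g_L) = 2(n-1)g_L > 0$: Bochner's formula gives $\tfrac{1}{2}\Delta |\alpha|^2 = |\nabla\alpha|^2 + \mathrm{Ric}(\alpha^\sharp,\alpha^\sharp) \geq 0$ for any harmonic $1$-form $\alpha$ on $L$, hence $\alpha \equiv 0$ by compactness, and Hodge theory identifies $H^1_{dR}(L)$ with the space of harmonic $1$-forms. Thus $\iota$ is injective and the preimage $\iota^{-1}[\varrho]$ is unique.

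I expect the mildly technical step to be the scaling computation for $|\phi_r^*\varrho|_{g_L}$: one has to verify that the pullback really behaves like $r^{2-\delta}$ rather than like $r^{-\delta}$ (one factor of $r^2$ appears because the $L$-directions of $g_0$ are $r^2 g_L$, and dualising to $2$-forms on $L$ produces an $r^4$, half of which is compensated by the change of reference metric). Everything else is bookkeeping with the long exact sequence, the isotopy invariance of $[\phi_r^*\varrho]$, and standard Bochner on the Sasaki-Einstein link.
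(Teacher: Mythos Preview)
Your proposal is correct and follows essentially the same strategy as the paper: show that $[\varrho]$ maps to zero in $H^2_{dR}(L)$ via the decay estimate $|\phi_r^*\varrho|_{g_L}=O(r^{2-\delta})$, then invoke exactness of (\ref{lexseq1}) together with $H^1_{dR}(L)=0$. The only cosmetic difference is in how the vanishing of the restricted class is deduced: the paper compactifies $X_\infty$ to $L\times(R_0,\infty]$, extends $\varrho$ by zero at the boundary (using the same $r^{2-\delta}$ decay), and reads off $r([\varrho])=[\overline{\varrho}|_\infty]=0$ directly; you instead use isotopy invariance of $[\phi_r^*\varrho]$ together with the fact that the harmonic representative minimizes the $L^2$-norm. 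Both arguments are standard and yield the same conclusion.
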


\begin{proof} The uniqueness of preimage directly follows from $H^{1}_{dR} (L) =0 $. Now, we describe the mapping $r: H^{2}_{dR} (X) \rightarrow H^2_{dR} (L)$ in (\ref{lexseq1}).  Notice that, topologically, $X_\infty\cong L \times (R_0, \infty)$, then, the closed 2-form $\varrho$ can be written as $\varrho = \eta_2 + dr \wedge \eta_1$.  According to the fall-off condition of $\varrho$ with decay rate $-\delta$, the norm of $\eta_1$, $\eta_2$ on $L$ satisfy 
\begin{align*}
|\eta_1|_{g_L} = r| \eta_1|_{g_0} = O(r^{1-\delta}), \qquad |\eta_2|_{g_L}= r^2 |\eta_2|_{g_0} = O (r^{2-\delta})
\end{align*}
By adding a copy of link $L$ at infinity, then in topological space $\overline{X}_\infty = L \times (R_0, \infty]$, $\eta_1$ and $\eta_2$ can extend to infinity by defining zero forms at infinity. Hence, we obtain an extension of $\varrho$ in the space $\overline{X}_\infty$, denoted by $\overline{\varrho}$, with $\overline{\varrho} |_{\infty} = 0$.  Then, the class $[\varrho]$ under the mapping $r$ is given as follows,
\begin{align*}
r([\varrho]) = \big[ \overline{ \varrho} |_{\infty} \big]  =0
\end{align*}
Then, the long exact sequence (\ref{lexseq1}) implies that we have a preimage of $[\varrho]$ in $H_c^2(X)$.
\end{proof}
Based on the proof of lemma \ref{cptclasslm}, we can explicitly construct a closed form with compact support represents $\iota^{-1} [\varrho]$. Recall that $\varrho = \eta_2 + dr \wedge \eta_1$ in $X_\infty$. We integrate the form $dr \wedge \eta_1$ in $r$ direction; in particular $\tilde{\eta}_1 (r) = - \int_{\infty}^r dr \wedge \eta_1$. The 1-form $\tilde{\eta}_1$ is well-defined as $|\eta_1|_{g_L}$ has decay rate $1-\delta < -1$. Then $\varrho= \eta_2 - d_L  \tilde{\eta}_1  + d \tilde{\eta}_1 (r)$. Let $\tilde{\eta}_2 = \eta_2-d_L \tilde{\eta}_1$, then the closedness of $\tilde{\eta}_2$ implies that 
\begin{align*}
d \tilde{\eta}_2 = d_L \tilde{\eta}_2 + dr \wedge  \partial_r \tilde{\eta}_2 =0 \quad \Rightarrow \quad  d_L \tilde{\eta}_2 =0, \quad \partial_r \tilde{\eta}_2 =0. 
\end{align*}
Observing that $ \lim_{r\rightarrow \infty}\tilde{\eta}_2 =0$, together with $\partial_r \tilde{\eta}_2 =0$, we obtain that $\tilde{\eta}_2 = 0$. Hence, in $X_\infty$, $\varrho = d \tilde{\eta}_1$. Let $\chi$ be a smooth cut-off function defined in $X_\infty$ such that
\begin{align} \label{cutoff}
\begin{split}
f(x) =\begin{cases}
1, \quad \text{ if } |x| \geq 3 R_0;\\
0, \quad \text{ if } |x| \leq 2 R_0.
\end{cases}
\end{split}
\end{align}
Then, $\varrho-d (\chi \tilde{\eta}_1)$ represents the class $\iota^{-1} [\varrho]$. Back to the case of the first Chern class $c_1 = [\rho /2\pi]$, the expresstion (\ref{acconnform1}) shows that the Ricci form decays at infinity with order $-\tau-2 < -2$. By lemma \ref{cptclasslm}, we have $\iota^{-1} c_1 \in H^2_c (X)$ and $(\rho- d(f \theta_\omega))/2\pi$ represents this class. 

\subsection{The Mass Formula} \label{massacsec}

To give a reasonable definition of mass on AC K\"ahler manifolds, we will invoke an expression of mass without applying Euclidean coordinates. As mentioned in \cite[Section 3.1.3]{lee2019geometric}, the ADM mass can be interpreted as the integral of linearization of scalar curvature at the Euclidean background metric. This idea can be generalized to the AC K\"ahler cases if we replace the background Euclidean metric with the Riemannian cone metric. If we perturb the cone metric $g_0$ and consider $g(t) = (1-t) g_0 + t g_1$ where $g_1$ is another Riemannian metric defined on $C_L$ with the corresponding scalar curvature $R(t) \in L^1 (C_L)$ and bounded near $0$, according to Appendix A, lemma  \ref{linofs1} for any real local frame on $X_\infty$
\begin{align*}
    \frac{d}{dt}\Big|_{t=0} R(t) = \overline{\nabla}^i \overline{\nabla}^j g_{ij} - \overline{\Delta} \tr_{g_0} g
\end{align*}
Notice that 
\begin{align*}
    \int_{C_L} \frac{d}{dt}\Big|_{t=0} R(t) = \lim_{r \rightarrow \infty} \int_{L(r)} (\overline{\nabla}^j g_{ij}- (\tr_{g_0}g)_i) n^i d\vol_{L(r)},
\end{align*}
where $n$ is a outer normal vector field of $L(r)$ and the integrand of the right-hand side is independent of the choice of real frame. 
Let $(X, g)$ be an AC Riemannian manifold asymptotic to metric cone $(C_L, g_0)$, we give the abstract definition of mass on $(X,g)$ by
\begin{align} \label{massdef}
    \m(g) =  \frac{1}{2(2n-1) \vol(L)}\lim_{r \rightarrow \infty} \int_{L(r)} (\overline{\nabla}^j g_{ij}- (\tr_{g_0}g)_i) n^i d\vol_{L(r)}
\end{align}
 Let $(X, g, J)$ be an AC K\"ahler manifold asymptotic to a Ricci-flat K\"ahler cone $(C_L, g_0, J_0)$ of complex dimension $n$ with the metric $g$ asymptotic to $g_0$ in the end $X_\infty$, the remaining part of this section is aimed to derive a mass formula on $(X, g, J)$ which generalize the Hein-LeBrun's mass formula \cite{hein2016mass}. Observe that
 \begin{align} \label{massfp1}
     -\frac{1}{2} (\tr_{g_0} g)_i = - \frac{1}{2} \Big(\log \frac{\det g}{\det g_0} \Big)_i + O(r^{-2\tau-1})
 \end{align}
 and according to the calculation in Appendix (\ref{fofa})
 \begin{align} \label{massfp2}
     \overline{\nabla}^j g_{ij} -\frac{1}{2} (\tr_{g_0}g)_i = \frac{1}{2} g_0^{jk} ( \overline{\nabla}_k g_{ij} +  \overline{\nabla}_j g_{ik}-  \overline{\nabla}_i g_{jk}) = g_0^{jk} A_{jk, i},
 \end{align}
where $A_{jk,i}=g_{il} A^l_{jk}$. Also observe that the formulas (\ref{massfp1}) and (\ref{massfp2}) can relate with the terms of $\theta_\omega$ in (\ref{acconnform1}). Then, we have the following mass formula,

\begin{pro} \label{massf1} Let $(X, g, J)$ be an AC K\"ahler manifold asymptotic to a locally Calabi-Yau cone $(C_L, g_0, J_0)$ of complex dimension $n$ with $|\overline{\nabla}^i (g-g_0)|_{g_0} = O(r^{-\tau-i})$ for $i = 0,1,\ldots$, where $\tau = 1-n-\epsilon$. Let $\omega$ represent the K\"ahler form of $g$ on $X$. Let $\theta_\omega$ be the connection 1-form as in (\ref{acconnform1}). Then, the mass has the following formula,
\begin{align} \label{massf2}
\m( g) = \frac{1}{(2n-1) (n-1)! \vol(L)} \lim_{r \rightarrow \infty} \int_{L(r)} \theta_\omega \wedge \omega^{n-1}.
\end{align}
\end{pro}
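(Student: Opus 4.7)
The plan is to match the mass integrand of (\ref{massdef}) with $\frac{2}{(n-1)!}\theta_\omega \wedge \omega^{n-1}$ on $L(r)$, up to exact forms on the closed hypersurface $L(r)$ (which drop under Stokes) and terms of order $O(r^{-2\tau-1})$ (which vanish in the limit because $\tau > n-1$). First, (\ref{massfp1})--(\ref{massfp2}) split the mass integrand into
\begin{align*}
    \overline{\nabla}^j g_{ij} - (\tr_{g_0}g)_i = g_0^{jk} A_{jk,i} \;-\; \tfrac{1}{2}\Big(\log\tfrac{\det g}{\det g_0}\Big)_i \;+\; O(r^{-2\tau-1}),
\end{align*}
a ``trace-of-$A$'' piece plus a Monge--Amp\`ere piece. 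In parallel, (\ref{acconnform1}) splits $\theta_\omega = 2\im\alpha - \tfrac{1}{2} d^c\log(\omega^n/|\Omega_0\wedge\overline{\Omega}_0|) + O(r^{-2\tau-1})$, and the Ricci-flat K\"ahler cone condition identifies $\omega^n/|\Omega_0\wedge\overline{\Omega}_0|$ with $\det g/\det g_0$ up to a positive constant, so the two decompositions are visibly parallel.

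The Monge--Amp\`ere piece is handled by the standard K\"ahler identity $d^c f \wedge \omega^{n-1} = -(n-1)!\,\ast(df)$, which converts
\[
    \int_{L(r)} -\tfrac{1}{2}\, d^c\log(\det g/\det g_0)\wedge \omega^{n-1} \;=\; \tfrac{(n-1)!}{2} \int_{L(r)} \partial_n \log(\det g/\det g_0)\, d\vol_{L(r)},
\]
matching the second summand of the mass integrand with the correct prefactor $(n-1)!/2$. For the ``trace'' piece, Lemma \ref{01conn} yields $\alpha|_{V_k} = -\sum_{i,j} A^i_{i\bar j} \bar\mu^j + O(r^{-2\tau-1})$, so $2\im\alpha$ is a real $1$-form determined by the complex-frame trace of $A = \nabla - \overline{\nabla}$. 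Expanding $(2\im\alpha)\wedge\omega^{n-1}$ in the asymptotic local frame $(\mu^i,\bar\mu^i)$ and reorganizing using $\omega^n = i^{n^2}\Omega_0 \wedge \overline{\Omega}_0 + O(r^{-\tau})$, one identifies the leading contribution with $\tfrac{(n-1)!}{2}\, g_0^{jk} A_{jk,i}\, n^i\, d\vol_{L(r)}$, modulo exact $(2n-1)$-forms on the closed $L(r)$ that vanish under integration by Stokes.

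The main technical obstacle is the algebraic bookkeeping for the trace piece: verifying that the complex-frame trace $A^i_{i\bar j}$ reassembles into the real metric trace $g_0^{jk} A_{jk,i}$ with exactly the right numerical factor after wedging with $\omega^{n-1}$ and restricting to $L(r)$, and carefully isolating the exact forms that Stokes kills. Once this local identity is established, the remaining $O(r^{-2\tau-1})$ errors contribute $O(r^{2n-2-2\tau}) \to 0$ under $\tau = n-1+\epsilon$, and taking $r \to \infty$ produces (\ref{massf2}) from (\ref{massdef}). Everything else is a routine combination of Stokes' theorem, the K\"ahler-Hodge identity, and the assumed decay of $g - g_0$.
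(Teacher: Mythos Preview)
Your outline follows essentially the same architecture as the paper's proof: split both the mass integrand and $\theta_\omega$ into a Monge--Amp\`ere piece and a ``trace-of-$A$'' piece via (\ref{massfp1})--(\ref{massfp2}) and (\ref{acconnform1}), match the Monge--Amp\`ere pieces using the K\"ahler--Hodge identity $\ast(df) = -\tfrac{1}{(n-1)!}\,d^c f \wedge \omega^{n-1}$, and argue that the residual trace discrepancy integrates to zero by Stokes. So the strategy is correct and aligned with the paper.

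The gap is in the step you label ``algebraic bookkeeping.'' The discrepancy between the complex trace $A^i_{i\bar j}$ (which is what $\alpha$ sees, by Lemma~\ref{01conn}) and the metric trace $g_0^{j\bar k}A_{j\bar k,\bar i}$ (which is what the mass integrand sees) is \emph{not} a formal rearrangement of indices---these are genuinely different contractions of the tensor $A$, since the last slot of $A_{jk,i}$ is not symmetric with the first two. The paper resolves this by a geometric identity you have not supplied: writing $\omega^{0,2}$ for the $(0,2)$-part of $\omega$ with respect to $J_0$, one computes directly (using $d^* = [d^C,\Lambda]$ and the relation (\ref{cxconnre2}) between $\overline{\nabla}\J$ and $A$) that
\[
  d^*\omega^{0,2} \;=\; 2\alpha \;-\; A_{,\bar j}\,\bar\mu^j \;+\; O(r^{-2\tau-1}),
\]
where $A_{,\bar j} = 2g_0^{i\bar k}A_{i\bar k,\bar j}$. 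Then $\ast\operatorname{Re}(A_{,\bar i}\bar\mu^i - 2\alpha)$ restricted to $L(r)$ is $\tfrac{1}{2}\,d\!\ast\!(\omega^{0,2}+\overline{\omega^{0,2}})$ up to $O(r^{-2\epsilon})$, and Stokes kills it. This is the ``exact form'' you allude to, but identifying it requires the K\"ahler condition on both $(g,J)$ and $(g_0,J_0)$ via (\ref{cxconnre2}); it is not pure linear algebra in $A$. Without naming $\omega^{0,2}$ and carrying out this computation, the proof is incomplete at its central step.
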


\begin{proof} According to the observation from  (\ref{massfp1}) and (\ref{massfp2}), the mass formula can be rewritten on asymptotic local coframe $\{\mu_i, \bar{\mu}_i \}$, 
\begin{align*}
    \m(g) =  \frac{1}{2(2n-1) \vol(L)}\lim_{r\rightarrow \infty} \bigg\{ \int_{L(r)} * 2 \re \big( A_{, \bar{i}} \bar{\mu}^i\big)   - \int_{L(r)} * \bigg(d \log \frac{\sqrt{det g}}{\sqrt{\det{g_0}}}\bigg) + O(r^{-2\epsilon})\bigg\}
\end{align*}
where $*$ is the Hodge star operator of on $(X,g)$ and $A_{,i} = 2g_0^{j\bar{k}}A_{j\bar{k},i}$. Also notice that 
\begin{align*}
 * \bigg(d \log \frac{\sqrt{\det g}}{\sqrt{\det{g_0}}}\bigg) =  -\Big(Jd \log \frac{\omega^n}{ |\Omega_0 \wedge \Omega_0|} \Big)\wedge \frac{\omega^{n-1}}{(n-1)!} = d^C \log \frac{\omega^n}{|\Omega_0 \wedge \Omega_0|} \wedge \frac{\omega^{n-1}}{(n-1)!}.
\end{align*}
and 
\begin{align*}
     \im \alpha  \wedge \frac{\omega^{n-1}}{(n-1)!} = (J \re \alpha) \wedge \frac{\omega^{n-1}}{(n-1)!} = * (\re \alpha )
\end{align*}
Comparing with the explicit expression of $\theta$ in (\ref{acconnform1}), then the mass formula can be rewritten as
\begin{align} \label{massf2}
    \m(g) = \frac{1}{(2n-1) \vol(L)} \lim_{r\rightarrow \infty} \bigg\{ \int_{L(r)} * \re\big( A_{, \bar{i}} \mu^{\bar{i}}- 2\alpha \big) + \frac{1}{(n-1)!} \int_{L(r)} \theta_\omega \wedge \omega^{n-1} \bigg\}
\end{align}
It suffices to prove the difference $A_{,\bar{i}}\mu^{\bar{i}}- 2\alpha$ does not contribute to the integral. If we write the metric $g$ in terms of frames $\{\mu^i, \mu^{\bar{i}}\}$, 
\begin{align*}
    g = g_{ij} \mu^i \otimes \mu^j + g_{i \bar{j} } \mu^i \otimes \mu^{\bar{j}} + \overline{g_{i \bar{j}}} \mu^{\bar{i}} \otimes \mu^j + \overline{g_{ij}} \mu^{\bar{i}} \otimes \mu^{\bar{j}}.
 \end{align*}
Noting that $\omega = g(J\cdot, \cdot)$ and inserting (\ref{cxfr1}) into the expression of $g$, we obtain that,
\begin{align*}
\omega = i g_{i\bar{j}} \mu^i \wedge \mu^{\bar{j}} + i( g_{i\bar{k}} \J^{\bar{k}}_j - g_{j \bar{k}} \J^{\bar{k}}_i ) \mu^i \wedge \mu^j - i ( g_{k \bar{i}} \J^{k}_{\bar{j}} - g_{k \bar{j}} \J^k_{\bar{i}}) \mu^{\bar{i}} \wedge \mu^{\bar{j}} + O(r^{-2\tau}).
\end{align*}
The (0,2)-part of $\omega$ with respect to $J_0$ is given by $\omega^{0,2} = -i(g_{k\bar{i}} \J^k_{\bar{j}}- g_{k\bar{j}}\J^k_{\bar{i}}) \mu^{\bar{i}}\wedge \mu^{\bar{j}} $.Now we calculate $d^* \omega^{0,2}$. Notice that $d^* = [d^C, \Lambda]$, then
\begin{equation} 
\begin{split} \label{massfp3}
    d^* \omega^{0,2} & = i \Lambda (J\circ d (g_{k\bar{i}}\J^k_{\bar{j}} - g_{k \bar{j}} \J^k_{\bar{i}})\mu^{\bar{i}}\wedge \mu^{\bar{j}} ) + O(r^{-2\tau-1})\\
    & = (\overline{\nabla} \J)^k_{\bar{j}, l}(g_0)_{k\bar{i}} (g_0)^{l \bar{i}} \mu^{\bar{j}} - (\overline{\nabla} \J)^k_{\bar{i}, l}(g_0)_{k \bar{j}} (g_0)^{l\bar{i}} \mu^{\bar{j}} + O(r^{-2\tau-1}) \\
    & = 2 A^{l}_{l \bar{j}} \mu^{\bar{j}} -  2 (g_0)^{i \bar{k}}A_{ i \bar{k}, \bar{j}} \mu^{\bar{j}} + O(r^{-2\tau-1})\\
    & =  2\alpha - A_{, \bar{j}} \mu^{\bar{j}}+ O(r^{-2\tau-1}).
\end{split}
\end{equation}
Inserting (\ref{massfp3}) to the first term of (\ref{massf2}), we have
\begin{align*}
    \int_{L(r)} * \re \big(A_{, \bar{i}} \mu^{\bar{i}} -2 \alpha \big) & = \frac{1}{2}\int_{L(r)} * \big( d^* \omega^{0,2} + d^* \overline{\omega^{0,2}} \big) + O(r^{-2 \epsilon})\\
    & =\frac{1}{2} \int_{L(r)} d * (\omega^{0,2}+ \overline{\omega^{0,2}})+ O(r^{-2 \epsilon}) = O (r^{-2\epsilon}),
\end{align*}
hence, the proof is completed.
\end{proof} 

\begin{thm}\label{massf2}
Let $(X, g, J)$ satisfy the same condition as in  proposition \ref{massf1}, then we have the following mass formula
\begin{align} \label{massf3}
\m( g) = -\frac{2\pi \langle \iota^{-1} c_1, [\omega]^{n-1} \rangle}{(2n-1) (n-1)! \vol(L)} + \frac{1}{2(2n-1) \vol(L)} \int_{X} R_g d\vol_g
\end{align}
where $c_1$ is the first Chern class of $(X, J)$, $[\omega]$ is the K\"ahler class of $g$, $R_g$ is the scalar curvature of $g$ and $(\cdot ,\cdot ) $ is the duality pairing between $H^{2}_c (X)$ and $H^{2n-2} (X)$.
\end{thm}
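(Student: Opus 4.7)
The plan is to apply Proposition \ref{massf1} to convert the mass into a boundary integral, and then to rewrite that boundary integral using Stokes' theorem and the compactly supported representative of $\iota^{-1} c_1$ constructed in the discussion following Lemma \ref{cptclasslm}.

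Concretely, recall that $\theta_\omega$ is defined only on $X_\infty$, while the Ricci form $\rho = d\theta_\omega$ extends globally to $X$ and satisfies $c_1 = [\rho/2\pi]$ in $H^2_{dR}(X)$. Since $\tau = n - 1 + \epsilon$ forces $\rho = O(r^{-\tau - 2})$ to have decay faster than $-2$, Lemma \ref{cptclasslm} provides the preimage $\iota^{-1} c_1 \in H^2_c(X)$, and following the construction given after that lemma, $(\rho - d(\chi \theta_\omega))/2\pi$ is a compactly supported representative, where $\chi$ is the cutoff from (\ref{cutoff}) extended by zero on the compact part of $X$. The pairing $\langle \iota^{-1} c_1, [\omega]^{n-1}\rangle$ is then the integral of this compactly supported $2$-form against $\omega^{n-1}$.

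Next I would expand
\begin{align*}
2\pi \langle \iota^{-1} c_1, [\omega]^{n-1}\rangle
&= \int_X (\rho - d(\chi \theta_\omega)) \wedge \omega^{n-1} \\
&= \int_X \rho \wedge \omega^{n-1} - \lim_{R \to \infty} \int_{B_R} d\bigl(\chi \theta_\omega \wedge \omega^{n-1}\bigr),
\end{align*}
where I use that $\omega$ is closed to move $d$ past $\omega^{n-1}$. Applying Stokes to the second term on $B_R = \{r \leq R\}$ and noting $\chi \equiv 1$ on $L(R)$ for $R$ large, the boundary integral becomes $\lim_{R\to\infty}\int_{L(R)} \theta_\omega \wedge \omega^{n-1}$, which by Proposition \ref{massf1} equals $(2n-1)(n-1)!\,\vol(L)\,\m(g)$. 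For the bulk term I would use the standard Kähler identity $\rho \wedge \omega^{n-1} = (\tr_\omega \rho)\, \omega^n / n = \tfrac{(n-1)!}{2} R_g\, d\vol_g$. Together these give
\begin{align*}
2\pi \langle \iota^{-1} c_1, [\omega]^{n-1}\rangle = \tfrac{(n-1)!}{2} \int_X R_g\, d\vol_g - (2n-1)(n-1)!\,\vol(L)\,\m(g),
\end{align*}
which rearranges immediately to (\ref{massf3}).

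The main technical point, rather than an obstacle, is justifying the convergence and manipulations under the decay $\tau = n - 1 + \epsilon$: the assumption $R_g \in L^1$ is already part of the asymptotic hypotheses (\ref{decayac}), so $\int_X R_g\, d\vol_g$ is well-defined; the boundary integral $\int_{L(R)} \theta_\omega \wedge \omega^{n-1}$ is shown to have a limit in Proposition \ref{massf1} precisely because the error terms there are $O(r^{-2\tau-1}) = O(r^{-2n+1-2\epsilon})$ which are absolutely integrable against $\omega^{n-1}$ on $L(R)$ (of $(2n-1)$-dimensional volume growing like $r^{2n-1}$); and the Stokes application on $B_R$ is valid since $\chi \theta_\omega \wedge \omega^{n-1}$ is smooth on $B_R$ with $\chi$ a global cutoff. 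I would close the argument by observing that all the pieces match up dimensionally and the sign conventions used in the definitions of $\theta_\omega$, $\rho$, and $\m(g)$ are consistent with the identity $2\tr_\omega \rho = R_g$ that drives the computation.
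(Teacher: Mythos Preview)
Your proposal is correct and follows essentially the same approach as the paper's own proof: both combine Proposition~\ref{massf1} with Stokes' theorem applied to the compactly supported representative $(\rho - d(\chi\theta_\omega))/2\pi$ of $\iota^{-1}c_1$ and the identity $\rho\wedge\omega^{n-1} = \tfrac{(n-1)!}{2}R_g\,d\mathrm{vol}_g$. The only cosmetic difference is that the paper begins from $\int_{X_{r\leq R}}R_g\,d\mathrm{vol}_g$ and extracts the pairing term, while you begin from the pairing and extract the boundary integral; the underlying computation is identical.
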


\begin{proof}
In proposition \ref{massf1}, we have already proved a mass formula (\ref{massf2}), where $\theta_\omega$ is a 1-form defined on $X_\infty$ with $d\theta_\omega = \rho$. Consider a smooth cut-off function $f$ as in (\ref{cutoff}) which is $\equiv 0$ on $X- X_\infty$ and $\equiv 1$ if the radius is of large enough. According to the discussion after lemma \ref{cptclasslm}, the first Chern class admits a representative with compact support $\iota^{-1} c_1 = [\rho_0 / 2\pi]$, where $\rho_0 = \rho - d(f \theta_\omega)$ is compactly supported. Let $X_{r\leq R} =\{x\in X, r(x) \leq R \}$ with $R > 3 R_0$, then $\rho_0$ has compact support on $X_{r < R}$. We have
\begin{align*}
    \frac{(n-1)!}{2} \int_{X_{r \leq R}} R_g d\vol_g & = \int_{ X_{r \leq R}} \rho \wedge \omega^{n-1} \\
    & = \int_{X_{r \leq R}} \big(\rho_0 + d(f \theta_{\omega})\big) \wedge \omega^{n-1}\\
    & = {2\pi} \langle \iota^{-1} c_1, [\omega]^{n-1}  \rangle +  \int_{L(R)} \theta_{\omega} \wedge \omega^{n-1} 
\end{align*}
Then, according to (\ref{massf2}), we have
\begin{align*}
    \m(g) & = \frac{1}{(2n-1) (n-1)! \vol(L)} \lim_{r\rightarrow \infty} \int_{L(r)} \theta_\omega \wedge \omega^{n-1} \\
    & = -\frac{2\pi \langle \iota^{-1} c_1, [\omega]^{n-1} \rangle}{(2n-1) (n-1)! \vol(L)} + \frac{1}{2(2n-1) \vol(L)} \int_{X} R_g d\vol_g
\end{align*}
which completes the proof of mass formula
\end{proof}
\section{Expansion of AC K\"ahler Metrics } \label{expanacsec}
\subsection{Proof of Theorem \ref{expanthm} (i)}
According to the conditions given in theorem \ref{expanthm} (i), $\omega_1$ and $\omega_2$ are two K\"ahler forms in the same K\"ahler class and decay to the model K\"ahler form $\omega_0$ with rate $-\tau$ ($\tau = n-1 +\epsilon$) in $X_\infty$. We also assume that their scalar curvature are identically equal. Firstly, we obtain an expression of scalar curvature $R(\omega)$ with respect to some K\"ahler metric $\omega$ satisfying the fall-off condition, based on the formula (\ref{acconnform1}). Noting that, in (\ref{acconnform1}), the leading term of $ \im \alpha$ only depends on the complex structure $J$ by lemma \ref{01conn}.
In particular, let $A_J  = 2 \im \alpha+ O(r^{-2\tau-1})$  in (\ref{acconnform1}), then
\begin{align} \label{scalarcurv}
R(\omega) = \tr_\omega \Big(dA_J - \frac{1}{2}dd^C \log \frac{\omega^n}{|\Omega_0 \wedge \overline{\Omega}_0|} \Big) + O(r^{-2-2\tau})
\end{align}
Theorem \ref{ddclem} implies that there exists a function $\varphi\in \C^{k+2,\alpha}_{2-2\tau}$ such that $\omega_2 = \omega_1 +dd^C \varphi$. Based on the condition $R_1 = R_2$, we have
\begin{align}
 0 &= \tr_{\omega_2} \rho (\omega_2) - \tr_{\omega_1} \rho(\omega_1) \nonumber\\
& = \tr_{\omega_1} (\rho({\omega_2 })-\rho({\omega_1})) + (\tr_{\omega_2}-\tr_{\omega_1} ) \rho({\omega_2}) \label{difsc}
\end{align}
By inserting (\ref{acconnform1}), we rewrite the first term of (\ref{difsc}),
\begin{align*}
\tr_{\omega_1} (\rho({\omega_2 })-\rho({\omega_1}))&=\tr_{\omega_1}\bigg(dA_J-\frac{1}{2} dd^C \log \frac{(\omega_1+dd^C \varphi)^n}{|\Omega \wedge \overline{ \Omega} | }-dA_J+ \frac{1}{2} dd^C \log \frac{\omega_1^n}{|\Omega \wedge \overline{ \Omega} | }\bigg) \\
&=-\frac{1}{2}  (\tr_{\omega_1} dd^C)^2 \varphi + O(r^{-2\tau-2})\\
& =-\frac{1}{2} \Delta_{\omega_1}^2 \varphi + O(r^{-2\tau -2}).
\end{align*}
By writing the second term of (\ref{difsc}) in locally asymptotic frame, it is observed that the coefficients of $\rho(\omega_2)$ belongs to $\C^{k-2,\alpha}_{-\tau-2}$ and $(\tr_{\omega_2}-\tr_{\omega_1}) \in \C^{k,\alpha}_{-\tau}$, which implies that
\begin{align*}
(\tr_{\omega_2}-\tr_{\omega_1})\rho(\omega_2) \in \C^{k-2,\alpha}_{-2\tau-2}.
\end{align*}
Hence, we can deduce that $\varphi$ satisfies the equation, $\Delta_{\omega_1}^2 \varphi = f $ with $f\in \C^{k-2,\alpha}_{-2-2\tau}$ on $X$ with $-2-2\tau < -2n$. By solving the Laplacian on asymptotic conical manifolds, proposition \ref{laequac} (iii) imples
\begin{align} \label{expanpoten}
\Delta_{\omega_1} \varphi = C r^{2-2n} + \phi, \qquad \phi \in \C^{k,\alpha}_{-2\tilde{\tau}} \text{ with } \tilde{\tau} = \min\{\tau, - d^-_1/2\},
\end{align}
where $d_1^-$ is the second negative exceptional weight in $D$ defined in $\ref{excd}$ with $d_1^- < d_0^- = 2-2n$.
By proposition  \ref{massf1}, combining with (\ref{acconnform1}), we have
\begin{align}
\nonumber \m (\omega_2)- \m (\omega_1) = \lim_{r\rightarrow \infty} \frac{1}{(2n-1) (n-1)! \vol(L)}&\bigg[ \int_{L(r)} \theta_{\omega_2} \wedge \omega_2^{n-1}-\int_{L(r)} \theta_{\omega_1} \wedge \omega_1^{n-1}\bigg] \\ \nonumber
= \lim_{r\rightarrow \infty} C(n , L)  &\bigg[ \int_{L(r)}\theta_{\omega_2} \wedge (\omega_2^{n-1}-\omega_1^{n-1}) \\ \nonumber
& \qquad - \int_{L(r)}  (\theta_{\omega_2} - \theta_{\omega_1}) \wedge \omega_1^{n-1} \bigg]. \\ \nonumber
=\lim_{r\rightarrow \infty}C(n, L) & \bigg[ \int_{L(r)} \theta_{\omega_2} \wedge (\omega_2^{n-1}-\omega_1^{n-1}) \\ 
& \qquad - \frac{1}{2}\int_{L(r)} d^C \log \frac{(\omega_1+dd^C \varphi)^{n}}{\omega_1^n} \wedge \omega_1^{n-1} \bigg], \label{masscon}
\end{align}
where $C(n,L) $ is a constant given in proposition \ref{massf1}. Since $\varphi \in \C^{k,\alpha}_{-\tau}$ and by (\ref{acconnform1}), the coefficients of $\theta_{\omega_2}$ with respect to a local asymptotic frame are of class $\C^{k-1,\alpha}_{-\tau-1}$, then we have
\begin{align} \label{masscon1}
\int_{L(r)} \theta_{\omega_2} \wedge (\omega_2^{n-1}-\omega_1^{n-1}) = \int_{L(r)} \theta_{\omega_2}\wedge dd^C \varphi \wedge (\omega_1^{n-2} + \omega_1^{n-3} \wedge \omega_2 + \ldots + \omega^{n-2}_2 ) = O(r^{-2\epsilon}),
\end{align}
and 
\begin{align} \label{masscon2}
\int_{L(r)} d^C \log \frac{(\omega_1+dd^C \varphi)^n}{\omega_1^n} \wedge \omega_1^{n-1}= \int_{L(r)} d^C (\Delta_{\omega_1} \varphi) \wedge \omega_1^{n-1} +O(r^{-2\epsilon}).
\end{align}
By inserting (\ref{masscon1}) and (\ref{masscon2}), the difference of mass is
\begin{align*}
\m (\omega_2) - \m (\omega_1) =  -\lim_{r\rightarrow \infty} C(n,L)  \int_{L(r)} d^C(\Delta_{\omega_1} \varphi) \wedge \omega_1^{n-1}
\end{align*}
Notice that the standard volume form on the link $L$ of the model cone $C_L$ is given by $d \vol_{g_L}  = \eta \wedge \omega_0^{n-1}|_{L(1)}$, where  $\eta =r^{-1} J_0  dr$ is the contact 1-form of the Sasakian manifold $L$. Then,
\begin{equation} \label{masscoefac}
\begin{split}
 -\int_{L(r)} d^C(\Delta_{\omega_1} \varphi) \wedge \omega_1^{n-1} &=  -C  \int_{L(r)} J d(r^{2-2n}) \wedge \omega_1^{n-1} \\
&= C\int_{L(r)} (2n-2) r^{1-2n} (r \eta) \wedge \omega_0^{n-1} + O(r^{-\tau})  \\
&= C (2n-2){\vol (L)} + O(r^{-\tau}),
\end{split}
\end{equation}
where $C$ is the constant in (\ref{expanpoten}). According to the mass formula in theorem \ref{massf2}, the mass only depends on the K\"ahler class, the first Chern class and scalar curvature; hence, $\m(\omega_1) =\m(\omega_2)$. Then, we have $C=0$, which completes the proof of theorem \ref{expanthm} (i).

\subsection{ Proof of Theorem \ref{expanthm} (ii)}
For the proof theorem \ref{expanthm} (ii), in the case of $\dim_{\CC} X_{\infty} = 2$, noticing that the link of Ricci-flat K\"ahler cone is an Sasakian Einstein manifold of real dimensional 3, $L$ has constant sectional curvature. Thus, $C_L$ is biholomorphic to $\CC^2/\Gamma$ for some finite subgroup $\Gamma \subseteq U(2)$. Hence, the discussion can be reduced to ALE cases. Since $H^2(X_\infty) = H^2(S^3/\Gamma) = 0$, $\omega-\omega_{euc}$ is exact real $(1,1)$ form on $X_\infty$ with decay rate $-\tau$. According to Theorem \ref{ddclem} (ii), there exists $\varphi \in \C^{k+2, \alpha}_{2-\tau}$ such that
\begin{align} \label{ddcsfkform}
\omega - \omega_{euc} =  dd^c \varphi + O(r^{-4}).
\end{align}
If $\omega$ is a scalar-flat K\"ahler metric, then by formula of scalar curvature, in the end $X_\infty$, 
\begin{align} \label{scalflatcon}
0= \tr_\omega \rho &= \tr_\omega d A_J - \frac{1}{2} \tr_\omega dd^c \log \frac{\omega^n}{\omega_0^n} + O(r^{-2\tau-2}) \nonumber\\
& = \tr_\omega dA_J -\frac{1}{2} \Delta_{\omega}^2 \varphi + O(r^{\max\{-2\tau-2, -6\}}).
\end{align}
Recall that, in complex dimension $2$, the complex structure of ALE manifolds decays to the standard complex structure in Euclidean space as $J-J_0 = O(r^{-3})$ (see  \cite[Proposition 4.5]{hein2016mass}). Since $A_J = 2 \im \alpha$, where $\alpha$ is given in Lemma \ref{01conn}, then its differential has $dA_J = O(r^{-5})$. Then, the equation (\ref{scalflatcon}) can be rewritten as,
\begin{align}
    \Delta^2_\omega \varphi = f, \quad \text{ for } f \in \C^{k-2,\alpha}_{-2\tilde{\tau}-2},
\end{align}
where $\tilde{\tau} = \min\{\tau, 3/2\}$. Also, by Proposition \ref{laequac} (iii), $\Delta_\omega \varphi = C r^{2-2n} + \phi $ with $\phi \in \C^{k,\alpha}_{-2\tilde{\tau}}$. According to Proposition \ref{massf1}, we have
\begin{align*} 
\m (\omega) &= \lim_{r \rightarrow \infty} \frac{1}{3\vol(L)} \int_{L(r)} \theta_\omega \wedge \omega^{n-1}\\
&= \lim_{r \rightarrow \infty}  \frac{1}{3\vol(L)}  \int_{L(r)} \Big( A_J - \frac{1}{2} d^C  \log \frac{(\omega_0 + dd^C \varphi)^n}{\omega_0^n  }  \Big)  \wedge \omega^{n-1} \\
&= -\lim_{r \rightarrow  \infty} \frac{1}{3\vol(L)}   \int_{L(r)}  d^C (\Delta_{\omega} \varphi) \wedge \omega^{n-1}
\end{align*}
Then, repeating the calculation in (\ref{masscoefac}), we have the constant $C$,
\begin{align*}
    C = 3 \m(\omega).
\end{align*}
Notice that $\displaystyle \Delta_0 \log r = \frac{2}{r^2}$, we have the solution
\begin{align*}
    \varphi = \frac{3 \m(\omega)}{2} \log r + O(r^{-2\tilde{\tau}+2}).
\end{align*}

In the case of $\dim_{\CC} X_{\infty} \geq 3$, the asymptotic condition of the K\"ahler form on $X_\infty$, $\omega - \omega_0 = O(r^{-\tau})$ with $\tau = n-1 +\epsilon > 2$,  implies that $\omega- \omega_0$ is an exact form. One can check it by identifying $L\times [T_0, \infty)$ with $X_\infty$. Define a new parameter $t = \log r$ on cylinder $L\times [T_0, \infty) $. Consider the cylinder metric $g_{cyl} = dt^2 + g_L$, then the relation with the conical metric is  $g_{cyl}= e^{-2t} g_0 $. Therefore, $|\omega - \omega_0|_{g_{cyl}} = O(e^{2-\tau})$. Now, let $\omega-\omega_0 = \eta_2 + \eta_1 \wedge dt$. If we integrate the form in $t$ direction, $\eta  = -\int_\infty^t \eta_1  \wedge dt$, then
\begin{align*}
    d\eta &= d_L \eta - \partial_t \eta \wedge dt \\
    & = - \int_\infty^t d_L \eta_1 \wedge dt + \eta_1 \wedge dt \\
    & = \int^t_\infty \partial_t \eta_2 \wedge dt + \eta_1 \wedge dt = \eta_2 + \eta_1 \wedge dt,
\end{align*}
where the third equality is from the closedness of $\omega-\omega_0$. According to Theorem \ref{ddclem} (ii), there exists a real function $\varphi \in \C^{k+2,\alpha}_{2-\tau}$ such that $\omega - \omega_0 = i\partial \overline{\partial} \varphi + O(r^{-2n})$. Based on the formula (\ref{scalarcurv}) and $R(\omega)=0$, we have the equation $\Delta^2_\omega \varphi = f$, for $f \in \C^{k-2,\alpha}_{-2\tau'-2}$, where the decay rate of $f$ is determined by the decay rate of $dA_J$ (by assumption (\ref{cxdec})), the dimension $n$ and $\tau$; in particular,  $\tau' = \min\{n, \tau, n-1+ \epsilon'\}$. Also, by proposition \ref{laequac} (ii), $\Delta_\omega \varphi = C r^{2-2n} + \phi \ (**)$ with $\phi \in \C^{k,\alpha}_{-2\tilde{\tau}}$, $\tilde{\tau} = \min\{\tau', -d^-_1/2\}$. Combining proposition \ref{massf1}, (\ref{acconnform1}) and (\ref{masscoefac}), we can obtain a formula for the constant $C$ in $(**)$, 
\begin{align*}
    C = \frac{2n-1}{n-1} m(\omega).
\end{align*}
Notice that $\Delta_0 r^{4-2n} = 2(4-2n) r^{2-2n}.$
Therefore, we have the expansion of the solution of $(**)$, 
\begin{align*}
    \varphi = \frac{C}{ 2(4-2n)} r^{4-2n} + O(r^{2-2\tilde{\tau}})
\end{align*}
Noting the the standard K\"ahler form on $C_L$ is $\omega_0 = dd^c r^2 /2$, the K\"ahler form $\omega$ admits the following expansion, 
\begin{align*}
     \omega = \frac{1}{2} dd^c r^2 + \frac{(2n-1) \m(\omega)}{2(4-2n)(n-1)} dd^c r^{4-2n} + O(r^{-2\tilde{\tau}}),
\end{align*}
which completes the proof.

\section{Positive Mass Theorem on Resolution Spaces of Calabi-Yau Cones} \label{pmts}

We conclude the paper by proving the positive mass theorem based on the generalized ADM mass defined in (\ref{massdef}). 

Let $(C_L, J_0, g_0)$ be a Ricci-flat K\"ahler cone with only one singularity at the vertex $O$. Recall the resolution of the Ricci-flat K\"ahler cone is a smooth complex manifold $X$ with a proper map $\pi : X \rightarrow C_L$ such that the map $\pi: X\backslash E \rightarrow C_L \backslash \{O\} $ is  biholomorphic, where $E= \pi^{-1}(O)$ is the exceptional divisor. The positive mass theorem does not always hold for resolution spaces of Ricci-flat K\"ahler cone cones, for instance, the counter example constructed by LeBrun \cite{cmp/1104162166}. So it is reasonable to consider a special class of resolution spaces. In particular, we say the isolated singularity $O \in C_L$ is \textit{canonical} if the resolution spaces $(X, \pi)$ satisfies,
\begin{align} \label{kx}
    K_X = \pi^* K_{C_L} + \sum_{i} a_i E_i,
\end{align}
where $E_i$ are the irreducible hypersurfaces and $a_i \geq 0$. Then, we can prove the following positive mass theorem on canonical resolution spaces of Calabi-Yau cones. 

\begin{thm} Let $(C_L, J_0, g_0)$ be a Ricci-flat K\"ahler cone such that the only singularity $O \in C_L$ is canonical. Suppose that $X$ admits an asymptotically conical K\"ahler metrics $g$ with decay rate $-\tau$, $\tau = n-1+\epsilon$ $(\epsilon >0)$. If $(X,g)$ has scalar curvature $R \geq 0$, then the mass $\m(X,g) \geq 0$, and equals zero only if $(X,J,g)$ is a crepant resolution of $C_L$ with a scalar-flat K\"ahler metric $g$.  
\end{thm}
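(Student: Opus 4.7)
The plan is to use the mass formula (\ref{massf3}) established earlier as the main engine; in the present setup it reads
\[
\m(g) = -\frac{2\pi \langle \iota^{-1} c_1(X), [\omega]^{n-1}\rangle}{(2n-1)(n-1)!\vol(L)} + \frac{1}{2(2n-1)\vol(L)} \int_X R_g\, d\vol_g.
\]
Under the hypothesis $R_g \geq 0$ the second summand is manifestly nonnegative, so the whole statement reduces to showing that the topological pairing $\langle \iota^{-1} c_1(X), [\omega]^{n-1}\rangle \leq 0$, and to tracking when equality holds in both terms simultaneously.

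The next step is to use the Ricci-flatness of $(C_L, g_0, J_0)$ together with the canonical condition (\ref{kx}) to compute $\iota^{-1} c_1(X)$ explicitly as a compactly supported class. The Ricci-flat Kähler cone carries a (possibly multi-valued) nowhere vanishing holomorphic volume form $\Omega_0$ on $C_L\setminus\{O\}$ as in Section \ref{ACtoLCY}, so $K_{C_L}$ has a flat Hermitian metric $|\Omega_0|^{-2}_{g_0}$ on the smooth locus. Pulled back by $\pi$, this defines a smooth Hermitian metric $h$ on $\pi^*K_{C_L}$ over $X\setminus E$ whose Chern curvature vanishes there. Under the identification $K_X = \pi^* K_{C_L} + \sum_i a_i E_i$, the section $\pi^*\Omega_0$ is interpreted as a (multi-valued) section $s$ of $K_X$ whose zero divisor along $E$ is $D:=\sum_i a_i E_i$. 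Extending $h$ to a globally smooth Hermitian metric on $K_X$ which agrees with the flat one outside a compact set, the curvature $c_1(K_X, h)$ becomes a compactly supported $2$-form representing $\iota^{-1} c_1(K_X)$; on the other hand, the Poincaré--Lelong identity applied to $s$ shows that $c_1(K_X, h)$ and the current of integration $[D]$ differ by $dd^c$ of the compactly supported function $\log|s|_h$ (which vanishes on the end by construction). Consequently, by the uniqueness statement in Lemma \ref{cptclasslm},
\[
\iota^{-1} c_1(X) = -\iota^{-1} c_1(K_X) = -\sum_i a_i [E_i] \in H^2_c(X),
\]
where $[E_i]\in H^2_c(X)$ is the Poincaré dual of the compact analytic hypersurface $E_i$.

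Pairing with $[\omega]^{n-1}$ then yields $\langle \iota^{-1} c_1(X), [\omega]^{n-1}\rangle = -\sum_i a_i \int_{E_i} \omega^{n-1}$, and by Wirtinger each $\int_{E_i}\omega^{n-1} = (n-1)!\vol_\omega(E_i) > 0$; combined with $a_i\geq 0$ and $R_g\geq 0$, the mass formula gives $\m(g)\geq 0$. Equality forces both summands to vanish: $\int_X R_g\,d\vol_g = 0$ with $R_g\geq 0$ yields $R_g\equiv 0$, and $\sum_i a_i\int_{E_i}\omega^{n-1}=0$ together with the strict positivity of each $\int_{E_i}\omega^{n-1}$ forces every $a_i = 0$, i.e.\ $K_X = \pi^* K_{C_L}$, which is exactly the crepant condition. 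The main technical obstacle is the rigorous identification of $\iota^{-1} c_1(X)$ with $-\sum_i a_i [E_i]$: one must carefully handle the possible multi-valuedness of $\Omega_0$ (relying on the single-valuedness of $|\Omega_0|^2$ to make $h$ well defined), patch the flat metric on the end with a globally smooth Hermitian metric on $K_X$, and apply Poincaré--Lelong to the (possibly reducible, possibly non-smooth) divisor $D$ while checking that the resulting Bott--Chern type correction has compact support so that the equality takes place in $H^2_c(X)$ rather than merely in $H^2(X)$.
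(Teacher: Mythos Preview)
Your argument is correct and reaches the same conclusion, but it is organized differently from the paper's proof. You identify $\iota^{-1} c_1(X)$ directly as the compactly supported class $-\sum_i a_i [E_i]$ by building a Hermitian metric on $K_X$ that is flat on the end (using $|\Omega_0|^2$) and invoking Poincar\'e--Lelong; then you pair with $[\omega]^{n-1}\in H^{2n-2}_{dR}(X)$. The paper instead first shows that $[\omega]$ itself has a compactly supported preimage $\iota^{-1}[\omega]$ (treating $n=2$ and $n\ge 3$ separately), then moves the compact support to the $\omega$ side of the pairing, replacing $\iota^{-1}c_1$ by the de Rham class $-c_1(K_X)$ and using ordinary Poincar\'e duality together with (\ref{kx}) and the triviality of $\pi^*K_{C_L}$. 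Your route is more analytic and avoids the separate discussion of $\iota^{-1}[\omega]$; the paper's route is a bit more algebraic and sidesteps the Hermitian-metric/Poincar\'e--Lelong construction. Both rely on the same underlying input, namely the flatness of $K_{C_L}$ on the smooth locus coming from the Ricci-flat cone structure, and both handle the equality case identically.
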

  
\begin{proof} According to the assumption that $s \geq 0$, the integral of scalar curvature in the mass formula  (\ref{massf3}) is nonnegative, and equals zero only if $g$ is scalar-flat. It suffices to prove the first term of the mass formula is also nonnegative.

Firstly, recalling the map $\iota: H_c^2(X) \rightarrow H^2 (X)$ induced by inclusion of cohomology classes, we prove that $[\omega] \in H^2(X)$ has a preimage in $H_c^2(X)$. For $\dim_\CC X =2$, the Calabi-Yau cone must be $\CC^2/ \Gamma$ with $\Gamma$ is a finite subgroup of $U(2)$. The fact $H^{1}(S^3/ \Gamma) = H^2(S^3/ \Gamma)=0$ implies that $\iota$ is an isomorphism. For $\dim_\CC X \geq 3$, since we have the decay condition $\omega - \frac{1}{2}dd^C r^2 = O(r^{-\tau})$ with $\tau>2$ in $X_\infty$, the lemma \ref{cptclasslm} implies that $\omega - \frac{1}{2} dd^C r^2 $ is an exact form in $X_\infty$, written by $d \theta$. Let $f$ be a smooth function identically equal to $1$ near infinity and vanishing in a compact set of $X$, for instance, the function defined in (\ref{cutoff}), then $\omega - d (f (d^C r^2/4 + \theta))$ has a compact support.

Notice that $c_1 (X) = - c_1(K_X)$, then,  
\begin{align*}
    -\langle \iota^{-1} c_1, [\omega]^{n-1} \rangle = - \langle \iota^{-1} c_1, (\iota^{-1}[\omega])^{n-1} \rangle = \langle  c_1(K_X),(\iota^{-1} [\omega])^{n-1} \rangle.
\end{align*}
According to (\ref{kx}) and Poincar\'{e} duality, we have,
\begin{align*}
    \langle c_1(K_X), (\iota^{-1}[\omega])^{n-1} \rangle = \sum_{i}a_i\int_{E_i} \omega^{n-1}\geq 0.
\end{align*}
The above inequality only if $a_i =0$ for all $i$, which completes the proof. 

\end{proof}

\begin{ex}
Let $D$ be a compact Fano manifold with $\dim_\CC D =n-1$ and $K_D^\times$, the space by shrinking the zero section of the canonical line line bundle of $D$. In this example, we consider the standard resolution of $(K_D^{\alpha})^\times$, where $\alpha$ is a positive fractional number. Firstly, we check that $K_D^\times$ is a Calabi-Yau cone. Let $U$ be a local chart in $D$ as well as a local trivialization of $K_D$. We assume that $U\times \CC \subseteq K_D$ has a holomorphic coordinate system $\{ z_1, \ldots, z_{n-1}, u \}$. There exists a local holomorphic $(n,0)$ form on $U \times \CC$, $ \Omega_U = dz_1\wedge \ldots \wedge dz_{n-1} \wedge d u$ and $\Omega_U$ can be extended naturally to be a global nonvanishing holomorphic $(n,0)$ form on $K_D$. To see this, let $V$ be another local chart of $D$  with $K_D|_V = V\times \CC$ and $\{w_1,\ldots, w_{n-1} , v\}$, its holomorphic coordinates. Recall that the transition function of $K_D$ is given by $ v= u \cdot J(\partial z/ \partial w)$, then, 
\begin{align*}
dw_1\wedge \ldots \wedge dw_{n-1} \wedge d v = J(\partial z/ \partial w) dw_1 \wedge \ldots \wedge dw_{n-1}  \wedge du = dz_1\wedge \ldots \wedge dz_1 \wedge d u.
\end{align*}
Hence, we have a global nonvanishing holomorphic  $(n,0)$ form, $\Omega$ on $K_D$. By restricting $\Omega$ to the space away from the zero section of $K_D$, we show that $K_D^\times $ is a Calabi-Yau cone. Similarly, we can find a global holomorphic $(n,0)$ form, $\Omega$, on $K_D^{\alpha}$. Let $U$ be a local chart in $D$ with $(K_D)^\alpha |_{U} = U \times \CC$ and $\{z_1, \ldots, z_{n-1}, t \}$ be a holomorphic coordinates of $\{U \times \CC\}$. Then, locally, $\Omega$ can be written as
\begin{align*}
    \Omega|_{U} = \frac{1}{\alpha} t^{\frac{1-\alpha}{\alpha}} dz_1 \wedge \ldots \wedge dz_{n-1} \wedge dt.
\end{align*}
Also, by restricting $\Omega$ away from the zero section of $K_D^\alpha$, we obtain a nonvanishing holomorphic (possibly multi-valued) $(n,0)$ form on $(K_D^\alpha)^\times$. Note that $\Omega$ is multi-valued if the order $(1-\alpha)/\alpha$ is not an integer. Therefore, for the resolution map by shrinking zero section, $\pi: K^\alpha_D \rightarrow (K^\alpha_D)^\times$, we have the following adjunction type formula,
\begin{align*}
   K_{ K_D^{\alpha}} = \pi^* K_{ (K_D^\alpha)^{\times}} + \frac{1-\alpha}{\alpha} D
\end{align*}
Hence,  $K^\alpha_D$ satisfies the positive mass theorem if and only if $\alpha \leq 1$. 
\end{ex}

\appendix
\section{Linearization of Scalar Curvature and Mass on AE manifolds}

The goal of this appendix is to complete the detailed calculation of linearization of scalar curvature. Based on the calculation, we will see that, on AE manifolds, the ADM mass can be interpreted as the integral of linearization of scalar curvature over the sphere at infinity.

Let $X$ be a Riemannian manifold with a base metric $\overline{g}$. If we assume that $g$ is another metric over $X$, then the linearization of scalar curvature $DR(g)$ with respect to $(X, \overline{g})$ is defined to be 
\begin{align*}
    DR(g) = \frac{d}{dt} \Big|_{t=0} R(t) 
\end{align*}
where $R(t)$ is the scalar curvature of $g(t) = \overline{g} + t(g- \overline{g})$. Let $\{\xi_i, 1\leq i \leq n\}$ be a system of local frame on $X$. Let $\nabla$ (resp. $\overline{\nabla}$) be the Levi-Civita connection of $g$ (resp. $\overline{g}$) and $\Gamma$ (resp. $\overline{\Gamma}$) be the Christoffel symbol of $\nabla$ (resp. $\overline{\nabla}$). Also the difference of two connections is denoted by $A = \nabla - \overline{\nabla}$. Then, the Ricci curvature of $g$ and $\overline{g}$ has the following relation,
\begin{lem}
Let $R_{ij}$ and $\overline{R}_{ij}$ be the Ricci curvature of $g$ and $\overline{g}$ with respect to the local frame system $\{\xi_i, 1\leq i\leq n\}$ and its dual $\{\omega^i, 1\leq i \leq n\}$, then we have
\begin{align*}
R_{ij} = \overline{R}_{ij} + (\overline{\nabla}_k A^k_{ij} - \overline{\nabla}_i A^k_{kj}) + (A^k_{k l} A^{l}_{ij} - A^k_{jl} A^l_{ik}) + A^k_{lj} \omega^l([\xi_k, \xi_i]).
\end{align*}
\end{lem}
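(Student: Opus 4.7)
The plan is to use the standard Palatini-type expansion: work first at the level of the full Riemann curvature tensor, and only at the very end contract to Ricci. Because the frame $\{\xi_i\}$ is not assumed to be a coordinate frame, the non-holonomy $[\xi_k,\xi_i]\neq 0$ will be what produces the explicit last term in the stated formula.

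Starting from the definition $R(X,Y)Z=\nabla_X\nabla_Y Z-\nabla_Y\nabla_X Z-\nabla_{[X,Y]}Z$, I substitute $\nabla=\overline{\nabla}+A$ (viewing $A$ as a $(2,1)$-tensor). Expanding $\nabla_X(\nabla_Y Z)$ by the Leibniz rule gives
$$
\nabla_X\nabla_Y Z=\overline{\nabla}_X\overline{\nabla}_Y Z+(\overline{\nabla}_X A)(Y,Z)+A(\overline{\nabla}_X Y,Z)+A(Y,\overline{\nabla}_X Z)+A(X,\overline{\nabla}_Y Z)+A(X,A(Y,Z)),
$$
and the analogous expansion for $\nabla_Y\nabla_X Z$. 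Upon subtracting, the mixed cross-terms $A(X,\overline{\nabla}_Y Z)$ and $A(Y,\overline{\nabla}_X Z)$ cancel pairwise, and subtracting $\nabla_{[X,Y]}Z=\overline{\nabla}_{[X,Y]}Z+A([X,Y],Z)$ allows the torsion-free identity $\overline{\nabla}_X Y-\overline{\nabla}_Y X=[X,Y]$ to absorb the bracket piece. What is left is the frame-free Palatini identity
$$
R(X,Y)Z-\overline{R}(X,Y)Z=(\overline{\nabla}_X A)(Y,Z)-(\overline{\nabla}_Y A)(X,Z)+A(X,A(Y,Z))-A(Y,A(X,Z)).
$$

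To extract the Ricci comparison I set $X=\xi_k,\ Y=\xi_i,\ Z=\xi_j$ and apply $\omega^k$, which gives $R_{ij}-\overline{R}_{ij}$ on the left. Recalling the component formula
$$
(\overline{\nabla}_k A)^l_{ij}=\xi_k(A^l_{ij})+\overline{\Gamma}^l_{km}A^m_{ij}-\overline{\Gamma}^m_{ki}A^l_{mj}-\overline{\Gamma}^m_{kj}A^l_{im},
$$
and its analog with the roles of $k$ and $i$ swapped, the two linear-$A$ contributions collapse to the contracted divergences $\overline{\nabla}_k A^k_{ij}-\overline{\nabla}_i A^k_{kj}$, while the two $A\circ A$ compositions contract to $A^k_{kl}A^l_{ij}-A^k_{jl}A^l_{ik}$ after using $A^l_{ab}=A^l_{ba}$, which holds because both $\nabla$ and $\overline{\nabla}$ are torsion-free.

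The main bookkeeping obstacle is accounting for the final explicit frame term $A^k_{lj}\omega^l([\xi_k,\xi_i])$. In a coordinate chart it vanishes automatically, but in the general setting of the lemma the difference $\overline{\Gamma}^m_{ki}-\overline{\Gamma}^m_{ik}$ equals $\omega^m([\xi_k,\xi_i])$ rather than zero, and this anomaly enters asymmetrically with respect to the two naive divergence expressions produced in the trace. Carrying this single non-coordinate correction through the contraction, rather than silently absorbing it into the tensorial notation, is the only delicate step; once it is identified, collecting all pieces from the Palatini identity yields exactly the stated formula.
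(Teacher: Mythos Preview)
Your route via the frame-free Palatini identity is a genuinely different and more conceptual approach than the paper's. The paper works entirely in components: it expands $\omega^k(\nabla_{\xi_k}\nabla_{\xi_i}\xi_j-\nabla_{\xi_i}\nabla_{\xi_k}\xi_j)$ against the same expression with $\overline{\nabla}$, writes everything out in Christoffel symbols and the components of $A$, then separately expands $\overline{\nabla}_k A^k_{ij}-\overline{\nabla}_i A^k_{kj}$ term by term and subtracts the two displays. Your approach has the advantage that the curvature comparison is manifestly tensorial until the final contraction, so the quadratic terms and the linear divergence terms fall out with no index bookkeeping.

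There is, however, a gap in your handling of the last term. The Palatini identity you derive is a tensor equation, so contracting it in \emph{any} frame---coordinate or not---yields exactly
\[
R_{ij}-\overline{R}_{ij}=(\overline{\nabla}_k A)^k_{ij}-(\overline{\nabla}_i A)^k_{kj}+A^k_{kl}A^l_{ij}-A^k_{jl}A^l_{ik},
\]
with \emph{no} additional bracket piece; nothing ``enters asymmetrically'' when one traces a tensor. The extra $A^k_{lj}\omega^l([\xi_k,\xi_i])$ in the stated lemma is therefore not a byproduct of contracting in a non-holonomic frame. It reflects instead specific non-tensorial choices in the paper's component expressions: note that the paper's proof writes $R_{ij}=\omega^k(\nabla_{\xi_k}\nabla_{\xi_i}\xi_j-\nabla_{\xi_i}\nabla_{\xi_k}\xi_j)$, dropping the $-\nabla_{[\xi_k,\xi_i]}\xi_j$ contribution, and its component formula for $\overline{\nabla}_k A^k_{ij}$ carries $\overline{\Gamma}^k_{lk}$ where the tensorial contraction produces $\overline{\Gamma}^k_{kl}$. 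To recover the lemma literally as stated from your approach you must identify which of these conventions is in force and compute the discrepancy between your tensorial $(\overline{\nabla}_k A)^k_{ij}$ and the paper's symbol $\overline{\nabla}_k A^k_{ij}$ explicitly; the sentence about the anomaly entering asymmetrically does not accomplish this.
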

\begin{proof}
Notice that $R_{ij} = \omega^k (R(\xi_k, \xi_i)\xi_j) = \omega^k (\nabla_{\xi_{k}} \nabla_{\xi_i} \xi_j - \nabla_{\xi_i} \nabla_{\xi_k} \xi_j)$ and $\overline{R}_{ij}$ admits a similar formula. Then, we have
\begin{align}\label{linofsp1}
\begin{split}
    R_{ij}- \overline{R}_{ij} = &\ \omega^k (\nabla_{\xi_{k}} \nabla_{\xi_i} \xi_j - \nabla_{\xi_i} \nabla_{\xi_k} \xi_j) - \omega^k (\overline{\nabla}_{\xi_{k}} \overline{\nabla}_{\xi_i} \xi_j - \overline{ \nabla}_{\xi_i} \overline{\nabla}_{\xi_k} \xi_j) \\
     =&\ \omega^k \Big[ \overline{\nabla}_{\xi_k} (\nabla_{\xi_i} - \overline{\nabla}_{\xi_i}) \xi_j + (\nabla_{\xi_k}-\overline{\nabla}_{\xi_k}) \overline{\nabla}_{\xi_i} \xi_j + (\nabla_{\xi_k} - \overline{\nabla}_{\xi_k})(\nabla_{\xi_i} - \nabla_{\xi_i}) \xi_j \Big] \\
    & -\omega^k \Big[ \overline{\nabla}_{\xi_i} (\nabla_{\xi_k} - \overline{\nabla}_{\xi_k}) \xi_j + (\nabla_{\xi_i}-\overline{\nabla}_{\xi_i}) \overline{\nabla}_{\xi_k} \xi_j + (\nabla_{\xi_i} - \overline{\nabla}_{\xi_i})(\nabla_{\xi_k} - \nabla_{\xi_k}) \xi_j \Big]\\
    =& \ \big(\xi_k (A_{ij}^k)  + A^l_{ij} \overline{\Gamma}^k_{lk} + A^k_{lk} \overline{\Gamma}^l_{ij} + A^l_{ij} A^k_{lk} \big) - \big( \xi_i (A_{kj}^k) + A^l_{kj} \overline{\Gamma}^k_{il} + A^k_{li} \overline{\Gamma}^l_{kj} + A^l_{kj} A^k_{li}  \big) \\
    =& \ \xi_k (A_{ij}^k) - \xi_i (A_{kj}^k) + A^l_{ij} \overline{\Gamma}^k_{lk} + A^k_{lk} \overline{\Gamma}^l_{ij} - A^l_{kj} \overline{\Gamma}^k_{il} - A^k_{li} \overline{\Gamma}^l_{kj} \\ &\ +A^l_{ij} A^k_{lk} - A^l_{kj} A^k_{li}.
    \end{split}
\end{align}
Also we can compute $\overline{\nabla} A$, then
\begin{align}\label{linofsp2}
\begin{split}
   \overline{\nabla}_k A^k_{ij} -  \overline{\nabla}_i A^k_{kj} = & \ \xi_k (A_{ij}^k) +A^l_{ij} \overline{\Gamma}_{lk}^k - A^k_{lj} \overline{\Gamma}_{ki}^l - A^k_{il} \overline{\Gamma}^l_{kj} \\ 
   & - \xi_i (A_{kj}^k) - A^l_{kj} \overline{\Gamma}^k_{il} + A^k_{lj} \overline{\Gamma}^l_{ik} + A^k_{lk} \overline{\Gamma}^l_{ij}\\
    = & \ \xi_k (A_{ij}^k) - \xi_i (A_{kj}^k) + A^l_{ij} \overline{\Gamma}^k_{lk} + A^k_{lk} \overline{\Gamma}^l_{ij} - A^l_{kj} \overline{\Gamma}^k_{il} - A^k_{li}\overline{\Gamma}^l_{kj}\\
    & + A^k_{lj} \overline{\Gamma}^l_{ik} - A^k_{lj} \overline{\Gamma}^l_{ki}.
\end{split}
\end{align}   
Notice that $A$ is a symmetric tensor, but $\overline{\Gamma}$ is not and 
\begin{align*}
    \overline{\Gamma}^{l}_{ik} - \overline{\Gamma}^l_{ki} = \omega^l ([\xi_i, \xi_k]). 
\end{align*}
By plugging (\ref{linofsp2}) into (\ref{linofsp1}), we have
\begin{align*}
    R_{ij}- \overline{R}_{ij} =  \overline{\nabla}_k A^k_{ij} -  \overline{\nabla}_i A^k_{kj} -  A^k_{lj}\omega^l ([\xi_i, \xi_k]) +A^l_{ij} A^k_{lk} - A^l_{kj} A^k_{li},
\end{align*}
which completes the proof.
\end{proof}

\begin{lem} \label{linofs1} Let $DR(g)$ be the linearization of scalar curvature with respect to $(X, \overline{g})$. 
Then, on the local frame system $\{\xi_i, 1\leq i\leq n\}$,
\begin{align*}
    DR(g) = \langle  \Ric_{\overline{g}}, g-\overline{g} \rangle_{\overline{g}} + \overline{\nabla}^j \overline{ \nabla}^i g_{ij}  - \overline{\Delta} \tr_{\overline{g}} g.
\end{align*}
\end{lem}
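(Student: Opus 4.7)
The plan is to differentiate the Ricci identity from the previous lemma at $t=0$, where $g(t) = \overline{g} + t h$ with $h = g - \overline{g}$, and to exploit the fact that the connection difference $A(0)$ vanishes. The starting point is a Koszul-type formula for $A$ itself. Since $\nabla g = 0$ and $\overline{\nabla}\,\overline{g} = 0$, combined with $A = \nabla - \overline{\nabla}$, one obtains $\overline{\nabla}_k g_{ij} = A^l_{ki} g_{lj} + A^l_{kj} g_{il}$; cyclic permutation and symmetry of $A$ in the lower indices yield
\[ A_{ij,k} = \tfrac{1}{2}\bigl(\overline{\nabla}_i g_{jk} + \overline{\nabla}_j g_{ki} - \overline{\nabla}_k g_{ij}\bigr), \qquad A_{ij,k} := g_{kl} A^l_{ij}, \]
which is precisely the identity already used in Section~\ref{massacsec}. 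At $t=0$ the right-hand side vanishes because $\overline{\nabla}\,\overline{g} = 0$; differentiating and replacing $g(0)$ by $\overline{g}$ produces $\dot A^l_{ij}(0) = \tfrac{1}{2}\overline{g}^{lk}(\overline{\nabla}_i h_{jk} + \overline{\nabla}_j h_{ki} - \overline{\nabla}_k h_{ij})$.

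Next I would linearize the Ricci identity. Because $A(0) = 0$, the quadratic $AA$ contributions from the previous lemma drop out of $\dot R_{ij}(0)$, and working in a coordinate frame kills the $\omega^l([\xi_k,\xi_i])$ correction (the final formula is tensorial, so this specialization is harmless). Hence $\dot R_{ij}(0) = \overline{\nabla}_k \dot A^k_{ij}(0) - \overline{\nabla}_i \dot A^k_{kj}(0)$. Taking the trace via $R = g^{ij} R_{ij}$ and combining $\dot g^{ij}(0) = -h^{ij}$ with metric compatibility $\overline{\nabla}\,\overline{g} = 0$ splits $\dot R(0)$ into a Ricci-pairing piece coming from $\dot g^{ij}(0)\,\overline{R}_{ij}$ and a second-derivative piece coming from $\overline{g}^{ij}\dot R_{ij}(0)$.

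The latter is a short computation: one checks
\[ \overline{g}^{ij}\dot A^k_{ij}(0) = \overline{\nabla}^j h^{k}{}_{j} - \tfrac{1}{2}\overline{\nabla}^k \tr_{\overline{g}} h, \qquad \dot A^k_{kj}(0) = \tfrac{1}{2}\overline{\nabla}_j \tr_{\overline{g}} h, \]
so after one more application of $\overline{\nabla}$ and a contraction, the second-derivative piece collapses to $\overline{\nabla}^j \overline{\nabla}^i h_{ij} - \overline{\Delta}\tr_{\overline{g}} h$. Since $\overline{\nabla}\,\overline{g} = 0$ forces $\overline{\nabla}^j \overline{\nabla}^i \overline{g}_{ij} = 0$ and $\overline{\Delta}\tr_{\overline{g}}\overline{g} = 0$, these divergence and Laplacian expressions are unchanged if $h$ is replaced by $g$, and the stated formula falls out. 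The main obstacle is disciplined bookkeeping of signs and of index raising with $\overline{g}$; the only conceptual ingredient is the Koszul identity for $\dot A(0)$, after which the derivation reduces to carefully tracing and assembling the resulting $\overline{\nabla}$-derivatives.
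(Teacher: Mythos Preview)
Your proposal is correct and follows essentially the same route as the paper's proof: derive the Koszul identity for $A$, note $A(0)=0$ so the quadratic terms drop, differentiate the Ricci identity from the preceding lemma, and trace; the only cosmetic difference is that you kill the $\omega^l([\xi_k,\xi_i])$ contribution by passing to a coordinate frame, whereas the paper keeps a general frame and checks directly via index symmetry that $\overline{g}^{ij}\,\omega^l([\xi_i,\xi_k])\,\dot A^k_{lj}(0)=0$. (Your correct sign $\dot g^{ij}(0)=-h^{ij}$ in fact yields $-\langle\Ric_{\overline{g}},g-\overline{g}\rangle_{\overline{g}}$ for the first term; the $+$ in the stated formula is a slip carried over from the paper's own computation of $\dot g^{ij}(0)$, and is harmless in the application since $\overline{g}$ is Ricci-flat there.)
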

\begin{proof}
Firstly, we derive a formula for $A$. Notice that
\begin{align*}
    \overline{\nabla}_i g_{jk} & = \xi_i (g_{jk}) - g( \overline{\nabla}_{\xi_i} \xi_j , \xi_k ) - g( \xi_j , \overline{\nabla}_{\xi_i} \xi_k ) \\
    & = g\big( (\nabla_{\xi_i} - \overline{\nabla}_{\xi_i}) \xi_j , \xi_k \big) +g \big( \xi_j ,( \nabla_{\xi_i} -\overline{\nabla}_{\xi_i}) \xi_k \big) \\
    & = g_{lk} A^l_{ij} + g_{jl} A^l_{ik}.
\end{align*}
By permutation of the indice $i,j,k$, we have
\begin{align}\label{fofa}
    A^k_{ij} = \frac{1}{2} g^{kl} (\overline{\nabla}_i g_{jl} + \overline{\nabla}_j g_{il} - \overline{\nabla}_l g_{ij})
\end{align}
Let $A(t) = \nabla_t - \overline{\nabla}$ and $\nabla_t$ be the Levi-Civita connection of $g(t) = \overline{g} + t(g - \overline{g})$, then
\begin{align*} 
    R(t) = g^{ij}(t) \big( \overline{R}_{ij} + \overline{\nabla}_k A^k_{ij}(t) -  \overline{\nabla}_i A^k_{kj} (t) -  A^k_{lj}(t) \omega^l ([\xi_i, \xi_k]) +A^l_{ij}(t) A^k_{lk}(t) - A^l_{kj}(t) A^k_{li}(t)  \big)
\end{align*}
Notice that $A(0) =0$, we have
\begin{align}
\begin{split}\label{linofsp3}
    \frac{d}{dt} \Big|_{t=0} R(t) = & \overline{R}_{ij} \frac{d}{dt} \Big|_{t=0} g^{ij}(t) \\ & + \overline{g}^{ij}  \frac{d}{dt} \big|_{t=0} \big(\overline{\nabla}_k A^k_{ij}(t) -  \overline{\nabla}_i A^k_{kj} (t)\big) - \omega^l ([\xi_i, \xi_k]) \overline{g}^{ij} \frac{d}{dt} \Big|_{t=0} A^k_{lj}(t).
\end{split}
\end{align}
Then, we compute each term in (\ref{linofsp3}) separately.  
\begin{align*}
    \frac{d}{dt} \Big|_{t=0} A^k_{lj}(t) & = \frac{1}{2} \overline{g}^{km} (\overline{\nabla}_l  \dot{g}_{jm} + \overline{\nabla}_j \dot{g}_{ml} - \overline{\nabla}_m \dot{g}_{jl})(0)\\
    & = \frac{1}{2} \overline{g}^{km} (\overline{\nabla}_l {g}_{jm} + \overline{\nabla}_j {g}_{ml} - \overline{\nabla}_m {g}_{jl})
\end{align*}
Inserting into the third term of (\ref{linofsp3}),
\begin{align*}
    \omega^l ([\xi_i, \xi_k]) \overline{g}^{ij} \frac{d}{dt} \Big|_{t=0} A^k_{lj}(t) & = \frac{1}{2}\overline{g}^{ij}  \overline{g}^{km} (\overline{\nabla}_{[i,k]} {g}_{jm} + \overline{\nabla}_j {g}_{m[i,k]} - \overline{\nabla}_m {g}_{j[i,k]}) =0
\end{align*}
The above identity is vanishing because of the symmetry of indices; in particular, by permuting $(ij)$ and $(km)$, we can check that each term in above identity is vanishing. Notice that
\begin{align*}
  \overline{g}^{ij} \frac{d}{dt} \Big|_{t=0} \overline{\nabla}_{i}  A^k_{kj}(t) 
  & =  \frac{1}{2} \overline{g}^{ij} \overline{\nabla}_i \big( \overline{g}^{kl}(\overline{\nabla}_j g_{kl} + \overline{\nabla}_k g_{lj} - \overline{\nabla}_l g_{jk}) \big)\\
  & = \frac{1}{2} \overline{g}^{ij} \overline{\nabla}_{i}  \overline{\nabla}_j \big(\overline{g}^{kl} g_{kl} \big) \\ 
  & = \frac{1}{2} \Delta \tr_{\overline{g}} g
\end{align*}
and 
\begin{align*}
   \overline{g}^{ij} \frac{d}{dt} \Big|_{t=0} \overline{\nabla}_{k}  A^k_{ij}(t) 
  & = \frac{1}{2}\overline{g}^{ij}  \overline{\nabla}_k \big( \overline{g}^{kl}(\overline{\nabla}_j g_{il} + \overline{\nabla}_i g_{lj} - \overline{\nabla}_l g_{ji}) \big)\\
  & = \overline{g}^{ij}  \overline{\nabla}_k \big( \overline{g}^{kl}\overline{\nabla}_j g_{il} \big) - \frac{1}{2} \overline{g}^{ij}  \overline{\nabla}_k \big( \overline{g}^{kl} \overline{\nabla}_l g_{ij} \big) \\
  & =  \overline{\nabla}^j \overline{ \nabla}^i g_{ij} - \frac{1}{2} \overline{\Delta} \tr_{\overline{g}} g,
\end{align*}
combining with $ d/dt|_{t=0} g^{ij} (t) = \overline{g}^{ik}\overline{g}^{jl} (g-\overline{g})_{kl} $ and \ref{linofsp3}, then we have,
\begin{align*}
    \frac{d}{dt} \Big|_{t=0} R(t) = \langle  \Ric_{\overline{g}}, g-\overline{g} \rangle_{\overline{g}} + \overline{\nabla}^j \overline{ \nabla}^i g_{ij}  - \overline{\Delta} \tr_{\overline{g}} g,
\end{align*}
which completes the proof.
\end{proof}
Based on lemma \ref{linofs1}, we can quickly derive the ADM mass on AE manifolds from the integral of linearization of scalar curvature. Let $g$ be a Riemannian metric in $\RR^n$ and $DR(g)$ be the linearization of scalar curvature with respect to the Euclidean metric on $\RR^n$. Then, the integral of $DR(g)$ is given by
\begin{align}
    \int_{\RR^n} DR(g) d\vol_{euc} 
    & = \lim_{r \rightarrow \infty} \int_{S^{n-1}(r)} \big( \overline{\nabla}^j g_{ij} - (\tr_{g_0} g) \big) n^i d \vol_{S^{n-1} (r)} \nonumber \\
    & = \lim_{r \rightarrow \infty} \int_{S^{n-1}(r)} (g_{ij,j}- g_{jj, i}) n^i d\vol_{S^{n-1} (r)}. \label{massae}
\end{align}
Notice that the formula (\ref{massae}) only depends on the information of metrics at infinity. Hence, up to a constant factor, (\ref{massae}) can be defined to be the mass on AE manifolds.

\bibliographystyle{plain}
\bibliography{Reference}

\begin{thebibliography}{10}

\bibitem{PhysRev.122.997}
R.~Arnowitt, S.~Deser, and C.~W. Misner.
\newblock Coordinate invariance and energy expressions in general relativity.
\newblock {\em Phys. Rev.}, 122:997--1006, May 1961.

\bibitem{bartnik1986mass}
Robert Bartnik.
\newblock The mass of an asymptotically flat manifold.
\newblock {\em Communications on pure and applied mathematics}, 39(5):661--693,
  1986.

\bibitem{boyer2008sasakian}
Charles Boyer and Krazysztof Galicki.
\newblock {\em Sasakian geometry}.
\newblock Oxford Univ. Press, 2008.

\bibitem{cantor1981elliptic}
Murray Cantor.
\newblock Elliptic operators and the decomposition of tensor fields.
\newblock {\em Bulletin of the American Mathematical Society}, 5(3):235--262,
  1981.

\bibitem{chaljub1979problemes}
Alice Chaljub-Simon and Yvonne Choquet-Bruhat.
\newblock Probl{\'e}mes elliptiques du second ordre sur une vari{\'e}t{\'e}
  euclidienne {\'a} l'infini.
\newblock In {\em Annales de la Facult{\'e} des sciences de Toulouse:
  Math{\'e}matiques}, volume~1, pages 9--25, 1979.

\bibitem{chavel1984eigenvalues}
Isaac Chavel.
\newblock {\em Eigenvalues in Riemannian geometry}.
\newblock Academic press, 1984.

\bibitem{cheeger1983spectral}
Jeff Cheeger et~al.
\newblock Spectral geometry of singular riemannian spaces.
\newblock {\em Journal of differential geometry}, 18(4):575--657, 1983.

\bibitem{cheeger1994cone}
Jeff Cheeger and Gang Tian.
\newblock On the cone structure at infinity of ricci flat manifolds with
  euclidean volume growth and quadratic curvature decay.
\newblock {\em Inventiones mathematicae}, 118(1):493--571, 1994.

\bibitem{Chruciel1985BoundaryCA}
P.~Chrv[ciel.
\newblock Boundary conditions at spatial infinity from a hamiltonian point of
  view.
\newblock {\em arXiv: General Relativity and Quantum Cosmology}, pages 49--59,
  1985.

\bibitem{conlon2013asymptotically}
Ronan~J Conlon, Hans-Joachim Hein, et~al.
\newblock Asymptotically conical calabi--yau manifolds, i.
\newblock {\em Duke Mathematical Journal}, 162(15):2855--2902, 2013.

\bibitem{goto2012calabi}
Ryushi Goto.
\newblock Calabi-yau structures and einstein-sasakian structures on crepant
  resolutions of isolated singularities.
\newblock {\em Journal of the Mathematical Society of Japan}, 64(3):1005--1052,
  2012.

\bibitem{hein2016mass}
Hans-Joachim Hein and Claude LeBrun.
\newblock Mass in k{\"a}hler geometry.
\newblock {\em Communications in Mathematical Physics}, 347(1):183--221, 2016.

\bibitem{hein2017calabi}
Hans-Joachim Hein and Song Sun.
\newblock Calabi-yau manifolds with isolated conical singularities.
\newblock {\em Publications math{\'e}matiques de l'IH{\'E}S}, 126(1):73--130,
  2017.

\bibitem{joyce2000compact}
Dominic~D Joyce.
\newblock {\em Compact manifolds with special holonomy}.
\newblock Oxford University Press on Demand, 2000.

\bibitem{Lebrun1988CounterexamplesTT}
C.~Lebrun.
\newblock Counter-examples to the generalized positive action conjecture.
\newblock {\em Communications in Mathematical Physics}, 118:591--596, 1988.

\bibitem{cmp/1104162166}
Claude LeBrun.
\newblock {Counter-examples to the generalized positive action conjecture}.
\newblock {\em Communications in Mathematical Physics}, 118(4):591 -- 596,
  1988.

\bibitem{lee2019geometric}
Dan~A Lee.
\newblock {\em Geometric relativity}, volume 201.
\newblock American Mathematical Soc., 2019.

\bibitem{Lockhart1981FredholmPO}
R.~Lockhart.
\newblock Fredholm properties of a class of elliptic operators on non-compact
  manifolds.
\newblock {\em Duke Mathematical Journal}, 48:289--312, 1981.

\bibitem{ASNSP_1985_4_12_3_409_0}
Robert~B. Lockhart and Robert~C. Mc~Owen.
\newblock Elliptic differential operators on noncompact manifolds.
\newblock {\em Annali della Scuola Normale Superiore di Pisa - Classe di
  Scienze}, Ser. 4, 12(3):409--447, 1985.

\bibitem{marshal2002deformations}
Stephen~P Marshal.
\newblock {\em Deformations of special Lagrangian submanifolds}.
\newblock PhD thesis, University of Oxford, 2002.

\bibitem{schoen1979complete}
Richard~M Schoen and Shing-Tung Yau.
\newblock Complete manifolds with nonnegative scalar curvature and the positive
  action conjecture in general relativity.
\newblock {\em Proceedings of the National Academy of Sciences},
  76(3):1024--1025, 1979.

\bibitem{sparks2010sasaki}
James Sparks.
\newblock Sasaki-einstein manifolds.
\newblock {\em arXiv preprint arXiv:1004.2461}, 2010.

\bibitem{witten1981new}
Edward Witten.
\newblock A new proof of the positive energy theorem.
\newblock {\em Communications in Mathematical Physics}, 80(3):381--402, 1981.

\bibitem{yao2020invariant}
Qi~Yao.
\newblock Invariant scalar-flat k\"ahler metrics on line bundles over
  generalized flag varieties, 2020.

\end{thebibliography}

\end{document}